\DeclareMathOperator{\Tr}{Tr}
\newcommand{\norm}[1]{\left\lVert#1\right\rVert}
\renewenvironment{abstract}{%
  \small \textbf{\abstractname}.}{\newline}
\providecommand{\keywords}[1]
{\indent \small	\textbf{Key words.} #1 \newline}
\providecommand{\subjclass}[1]
{\indent \small \textbf{AMS subject classifications.} #1}
\newtheorem{theorem}{Theorem}[section]
\newtheorem{lemma}[theorem]{Lemma}
\newtheorem{remark}[theorem]{Remark}
\title{An approximate maximum likelihood estimator of drift parameters in a multidimensional diffusion model\footnote{This work has been fully supported by Croatian Science Foundation under the project IP-2020-02-9559.}}
\author{Miljenko Huzak\footnote{Department of Mathematics, Faculty of Science, University of Zagreb, 10 000 Zagreb, Croatia  (miljenko.huzak@math.hr)}, Snje\v zana Lubura Strunjak\footnote{Department of Mathematics, Faculty of Science, University of Zagreb, 10 000 Zagreb, Croatia (snjezana.lubura.strunjak@math.hr)}, Andreja Vlahek \v Strok\footnote{Faculty of Chemical Engineering and Technology, University of Zagreb, 10 000 Zagreb, Croatia (avlahek@fkit.hr)}}
\begin{document}

\maketitle

\begin{abstract}
For a fixed $T$ and $k \geq 2$, a $k$-dimensional vector stochastic differential equation $dX_t=\mu(X_t, \theta)\,dt+\nu(X_t)\,dW_t,$ is studied over a time interval $[0,T]$. Vector of drift parameters $\theta$ is unknown. The dependence in $\theta$ is in general nonlinear. We prove that the difference between approximate maximum likelihood estimator of the drift parameter $\overline{\theta}_n\equiv \overline{\theta}_{n,T}$ obtained from discrete observations $(X_{i\Delta_n}, 0 \leq i \leq n)$ and maximum likelihood estimator $\hat{\theta}\equiv \hat{\theta}_T$ obtained from continuous observations $(X_t, 0\leq t\leq T)$, when $\Delta_n=T/n$ tends to zero, converges stably in law to the mixed normal random vector with covariance matrix that depends on $\hat{\theta}$ and on path $(X_t, 0 \leq t\leq T)$. The uniform ellipticity of diffusion matrix $S(x)=\nu(x)\nu(x)^T$ emerges as the main assumption on the diffusion coefficient function.
\end{abstract}\\
\keywords{multidimensional diffusion processes, maximum likelihood estimation, uniform ellipticity, asymptotic mixed normality}\\
\subjclass{62M05, 62F12, 60J60}

\section{Introduction}
Let $X=(X_t, \, t \geq 0)$ be a multidimensional diffusion with values in an open and convex state space $E\subseteq\mathbb{R}^k$ which satisfies It\^o stochastic differential equation
\begin{align}\label{sde}
dX_t=\mu(X_t, \theta)\,dt+\nu(X_t)
\,dW_t, \quad X_0=x_0,\quad t \geq 0,
\end{align}
where $W=(W_t, \, t \geq 0)$ is a $k$-dimensional Brownian motion and $x_0\in E$ is a known nonrandom initial state of $X$. Componentwise, it is a system of differential equations of the following form
\begin{align*}
X_t^{i}=x_{0}^{i} + \int_{0}^{t}\mu_{i}(X_s,\theta)\,ds + \sum_{j=1}^k \int_{0}^{t} 
\nu_{ij}(X_s)\,dW_s^j, \quad \text{for } i=1,2, \dots, k.
\end{align*}
For $d\geq 1$, let $\theta \in \Theta \subseteq \mathbb{R}^d$ be 
a drift parameter, or a vector of drift parameters if $d\geq 2$. Let us denote by $\theta_0$ its true value, and 
$\mathbb{P}\equiv \mathbb{P}_{\theta_0}$. Function $\mu(x,\theta)$ is called drift function and 
$S(x)=\nu(x)\nu(x)^T$ diffusion matrix. Diffusion coefficient function $\nu(x)$ may contain parameters that we consider known.\par
Let $T>0$ be fixed. For a fixed $n \in \mathbb{N}$, let $0 \eqqcolon t_0 < t_1 < \dots < t_n \coloneqq T$ be such a subdivision of the time interval $[0,T]$ that for $i=1, \dots, n$,  $t_i=i\Delta_n$ and $\Delta_n=T/n$. For a given discrete observation $(X_{t_i},\, 0 \leq i \leq n)$ of $X$ over time interval $\left[0, T\right]$,
our goal is to estimate the vector of parameters $\theta$ belonging to $\Theta$. We assume that $\Theta$ is an open, relatively compact and convex set in $\mathbb{R}^d$. In this paper, we use the Euler approximation of Riemann and Itô integrals in \eqref{sde} to obtain the set of difference equations whose solution is an approximation of the diffusion $X$. A similar estimation technique has been used in the case of a one-dimensional diffusion (\cite{huzak2018, lubhuz}). In this one-dimensional case it has been proved the existence of a sequence of so-called approximate maximum likelihood estimator (AMLE) 
which converges in probability to the maximum likelihood estimator with respect to a continuous observation (MLE) 
when $n$ goes to infinity \cite{huzak2001}. This result was obtained under smoothness assumptions of log-likelihood functions of the initial process and the approximation process, and under conditions that these  log-likelihood functions as well as their first and second derivatives were in some sense close. The same result can also be stated in a multidimensional framework.\par
In this paper we consider multidimensional diffusion ($k\geq 2$). We aim to prove that the difference between AMLE based on a discrete observation $\overline{\theta}_n$ and MLE based on continuous observations $\overline{\theta}_n$ over $[0,T]$ scaled by $\sqrt{\Delta_n}$ is asymptotically mixed normal when $\Delta_n$ tends to zero. The covariance matrix of the resulting random vector depends on the MLE and the path $(X_t, t \in [0, T])$.\par
The same result has been proved in \cite{lubhuz} for one-dimensional diffusion. In the multidimensional domain, the idea of the proof is similar, but the techniques are different. Advanced knowledge of linear algebra simplifies the notation and analysis. The components of $k$-dimensional Brownian motion are independent by definition, which often comes into play in the calculations in the proofs. There is also a new regularity condition for the diffusion matrix. It is a so-called uniform ellipticity condition (\cite{alfonsi,lee,gobet2001,gobet2002}). This condition actually means that the noise in the system is non-degenerate in any dimension and the process is somehow controlled (\cite{clairon}, \cite{gobet2005}). Most of the main results are first proved for a compact state space $E$ and then for an open $E \subseteq \mathbb{R}^k$.\par
The paper is organized as follows. In the next section we introduce the notation and give necessary definitions and known auxiliary results. Section 3 describes the estimation method. The assumptions and main theorems are stated in Section 4. The proofs of the main results are in Section 5, and an example and simulations are presented in Section 6. The Appendix consists of proofs of the lemmas.

\section{Preliminaries}
For two matrices $A$ and $B$ of the same dimension $m \times n$, the \textit{Hadamard product} $A \circ B$ is the matrix of the same dimension, with elements given by $\left(A \circ B\right)_{ij}=A_{ij}B_{ij}$ \cite{gentle}. We denote by $\left<\cdot | \cdot\right>$ the scalar product in Euclidean space $\mathbb{R}^k$ and by $\norm{\cdot}_2$ the induced norm.  We say that the Frobenius norm of a square matrix $A \in \mathbb{R}^k$, $\norm{A}_F$, is consistent with the Euclidean norm $\norm{x}_2$ if $\norm{Ax}_2 \leq \norm{A}_F\norm{x}_2$. Let $f : E\times \Theta \to \mathbb{R}^k$ be a vector valued function. We denote by $D_{\mathbf{j}}^mf(x, \theta)$ a partial derivative of $m$-th order of $f(x,\theta)$ with respect to the vector of parameters $\theta$, ie.
\begin{align*}
D_{\mathbf{j}}^mf(x,\theta)\coloneqq\left[\frac{\partial^mf_1}{\partial \theta_1^{j_1}\cdots \theta_d^{j_d}}(x,\theta), \,\cdots, \, \frac{\partial^mf_k}{\partial \theta_1^{j_1}\cdots \theta_d^{j_d}}(x,\theta)\right]^T
\end{align*}
where $\mathbf{j}=\left[j_1,j_2,\dots,j_d\right]^T$ and $m=j_1+\dots+j_d$. $\nabla_x f(x, \theta)$ denotes a $k$-dimensional matrix of partial derivatives of $f$ with respect to the space vector $x$. If function $f$ is such that $f: E \to \mathbb{R}^k$, $\nabla f$ is the Jacobian matrix of $f$, and for every component function $f_i$, $\nabla \left(\nabla f_i \right)$ denotes a matrix of second partial derivative of $f_i$. If function $f$ is such that $f: \Theta \to \mathbb{R}$, then $Df(\theta)$ denotes a $d$-dimensional vector of partial derivatives with respect to $\theta$ and $D^2f(\theta)$ a matrix of second partial derivatives with respect to $\theta$. Briefly, $\partial_j f(\theta)$ denotes partial derivative $\partial f(\theta)/\partial \theta_j$ for $j=1,\dots, d$. Vector $e_p$ is a unit vector that has one at $p$-th place and zeros elsewhere.
\begin{lemma}\label{mito}
For $C^2$-function $F : \mathbb{R}^k \rightarrow \mathbb{R}^k$ and random process $(X_t)_{0 \leq t\leq T}$ that satisfies \eqref{sde}, It\^o formula on $[s_1, s_2]$ yields \cite{klopla}:
\begin{align*}
&F(X_{s_2})-F(X_{s_1})=\\
&=\int_{s_1}^{s_2} \left(\nabla F(X_u)\mu(X_u, \theta_0)+ \frac{1}{2}
\nabla_2 F(X_u)\right)\,du+\\
&+ \int_{s_1}^{s_2} \nabla F(X_u) \nu(X_u) 
\, dW_u,
\end{align*}
where $\nabla_2 F$ denotes vector  $\left[\Tr{\left[S(\cdot)\nabla\left(\nabla F_1(\cdot)\right)\right]},\,\dots,\Tr{\left[S(\cdot)\nabla\left(\nabla F_k(\cdot)\right)\right]}\right]^T$.
\end{lemma}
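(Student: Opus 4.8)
The plan is to obtain the statement by applying the classical (scalar) Itô formula separately to each of the $k$ coordinate functions $F_i:\mathbb{R}^k\to\mathbb{R}$ of $F$ and then repackaging the resulting expressions in matrix--vector notation; in effect this is a restatement of the multidimensional Itô formula of \cite{klopla} adapted to the coefficients of \eqref{sde}. Since $F$ is $C^2$ and each coordinate $X^j$ is a continuous semimartingale whose decomposition is read off directly from \eqref{sde}, for every fixed $i\in\{1,\dots,k\}$ the standard Itô formula on $[s_1,s_2]$ gives
\begin{align*}
F_i(X_{s_2})-F_i(X_{s_1})=\sum_{j=1}^k\int_{s_1}^{s_2}\partial_{x_j}F_i(X_u)\,dX_u^j+\frac{1}{2}\sum_{j,l=1}^k\int_{s_1}^{s_2}\partial^2_{x_jx_l}F_i(X_u)\,d\langle X^j,X^l\rangle_u.
\end{align*}

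Next I would substitute $dX_u^j=\mu_j(X_u,\theta_0)\,du+\sum_{m=1}^k\nu_{jm}(X_u)\,dW_u^m$ and use that, because the components of $W$ are independent, $d\langle W^m,W^{m'}\rangle_u=\delta_{mm'}\,du$; hence $d\langle X^j,X^l\rangle_u=\sum_{m=1}^k\nu_{jm}(X_u)\nu_{lm}(X_u)\,du=S_{jl}(X_u)\,du$ by the definition $S=\nu\nu^T$. The finite-variation part of the $i$-th equation then reads $\int_{s_1}^{s_2}\big(\langle\nabla F_i(X_u)\mid\mu(X_u,\theta_0)\rangle+\tfrac12\sum_{j,l}\partial^2_{x_jx_l}F_i(X_u)S_{jl}(X_u)\big)\,du$, and since $\nabla(\nabla F_i)$ is the Hessian of $F_i$ and $S$ is symmetric, $\sum_{j,l}\partial^2_{x_jx_l}F_i(X_u)S_{jl}(X_u)=\Tr[S(X_u)\nabla(\nabla F_i(X_u))]$, which is exactly the $i$-th entry of $\nabla_2F$. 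The martingale part is $\sum_{m=1}^k\int_{s_1}^{s_2}\big(\sum_j\partial_{x_j}F_i(X_u)\nu_{jm}(X_u)\big)\,dW_u^m=\sum_m\int_{s_1}^{s_2}(\nabla F(X_u)\nu(X_u))_{im}\,dW_u^m$.

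Finally, stacking the $k$ componentwise identities into a single vector equation, the drift term becomes $\int_{s_1}^{s_2}\big(\nabla F(X_u)\mu(X_u,\theta_0)+\tfrac12\nabla_2F(X_u)\big)\,du$ and the stochastic term becomes $\int_{s_1}^{s_2}\nabla F(X_u)\nu(X_u)\,dW_u$, which is the claimed formula. There is no genuine analytic obstacle: the $C^2$ regularity of $F$ together with the standing regularity of the coefficients (which guarantees a strong solution of \eqref{sde} exists and the coordinate stochastic integrals are well defined) does all the work, so the only point requiring care is the index bookkeeping in passing from the componentwise statement to the compact matrix form --- specifically, identifying the Hessian--trace term with the $i$-th component of $\nabla_2F$ and the double sum over $j,m$ with the matrix product $\nabla F\,\nu$.
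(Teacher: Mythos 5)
Your derivation is correct: applying the scalar It\^o formula to each coordinate $F_i$, identifying $d\langle X^j,X^l\rangle_u=S_{jl}(X_u)\,du$, recognizing $\sum_{j,l}\partial^2_{x_jx_l}F_i\,S_{jl}=\Tr[S\,\nabla(\nabla F_i)]$, and stacking into matrix--vector form is exactly the standard argument behind this statement. The paper itself offers no proof --- it cites Kloeden and Platen for the lemma as a known auxiliary result --- and your componentwise derivation is precisely the one that reference (and any textbook treatment) would give, so there is nothing to add or correct.
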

For a scalar function $F$ the previous lemma has the following form.
\begin{lemma}\label{vito}
For $C^2$-function $F : \mathbb{R}^k \rightarrow \mathbb{R}$ and random process $(X_t)_{0\leq t \leq T}$ that satisfies \eqref{sde} It\^o formula yields:
\begin{align*}
&F(X_{s_2})-F(X_{s_1})=\\
&=\int_{s_1}^{s_2} \left(\left<\nabla F(X_u)\,| \,\mu(X_u,\theta_0)\right>+\frac{1}{2}\Tr{\left[S(X_u)\nabla(\nabla F(X_u))\right]}\right)\,du \\
&+ \int_{s_1}^{s_2} \left<\nabla F(X_u)\,|\,\nu(X_u)
\,dW_u\right>.
\end{align*}
\end{lemma}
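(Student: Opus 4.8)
The plan is to apply the classical multidimensional It\^o formula to the $C^2$ scalar function $F$ along the semimartingale $X$ and then rewrite the resulting sums in the compact matrix--vector notation of the statement. First I would record that, componentwise, $dX_t^i=\mu_i(X_t,\theta_0)\,dt+\sum_{j=1}^k\nu_{ij}(X_t)\,dW_t^j$, so that the cross-variations are
\begin{align*}
d\left<X^i,X^l\right>_t=\sum_{j=1}^k \nu_{ij}(X_t)\nu_{lj}(X_t)\,dt=S_{il}(X_t)\,dt ,
\end{align*}
where the crucial input is that the components $W^1,\dots,W^k$ of the Brownian motion are independent, hence $d\left<W^j,W^{j'}\right>_t=\delta_{jj'}\,dt$.

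Next I would write the It\^o formula for $F(X_{s_2})-F(X_{s_1})$ in coordinates,
\begin{align*}
F(X_{s_2})-F(X_{s_1})=\sum_{i=1}^k\int_{s_1}^{s_2}\frac{\partial F}{\partial x_i}(X_u)\,dX_u^i+\frac12\sum_{i,l=1}^k\int_{s_1}^{s_2}\frac{\partial^2 F}{\partial x_i\,\partial x_l}(X_u)\,d\left<X^i,X^l\right>_u ,
\end{align*}
substitute the dynamics of $X^i$ together with $d\left<X^i,X^l\right>_u=S_{il}(X_u)\,du$, and regroup the terms. The $du$-terms coming from the drift give $\sum_{i=1}^k\frac{\partial F}{\partial x_i}(X_u)\mu_i(X_u,\theta_0)=\left<\nabla F(X_u)\,|\,\mu(X_u,\theta_0)\right>$; the stochastic terms give $\sum_{i,j=1}^k\frac{\partial F}{\partial x_i}(X_u)\nu_{ij}(X_u)\,dW_u^j=\left<\nabla F(X_u)\,|\,\nu(X_u)\,dW_u\right>$; and the second-order term becomes $\frac12\sum_{i,l=1}^k\frac{\partial^2 F}{\partial x_i\,\partial x_l}(X_u)S_{il}(X_u)=\frac12\Tr\!\left[S(X_u)\nabla(\nabla F(X_u))\right]$, using the symmetry of $S$ and of the Hessian $\nabla(\nabla F)$ together with the identity $\Tr[AB]=\sum_{i,l}A_{il}B_{li}$. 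Collecting the three pieces yields the claimed formula.

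Alternatively, since the vector It\^o formula of Lemma~\ref{mito} is already available, one may deduce Lemma~\ref{vito} from it by applying that statement to $\widetilde F=(F,0,\dots,0)^T:\mathbb{R}^k\to\mathbb{R}^k$ and reading off the first component: the drift, diffusion and $\nabla_2$ terms collapse exactly to the scalar expressions above, because $\nabla\widetilde F$ has first row $\nabla F$ and vanishing rows below it, and the first entry of $\nabla_2\widetilde F$ is $\Tr[S(\cdot)\nabla(\nabla F(\cdot))]$. I would present whichever of the two routes keeps the notation lightest, most likely the direct computation.

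I do not expect a genuine obstacle: the statement is the classical It\^o formula, and the only care required is notational --- writing the Hessian as $\nabla(\nabla F)$, setting up the trace pairing $\Tr[S\,\nabla(\nabla F)]$ correctly, and invoking the independence of the Brownian components so that $\nu\nu^T=S$ appears in place of a general quadratic-covariation matrix. Integrability of the integrands over $[s_1,s_2]$ is automatic from the continuity of $X$, $\nabla F$, $\nabla(\nabla F)$, $\mu(\cdot,\theta_0)$ and $\nu$, so the stochastic integral is a well-defined continuous local martingale, which is all that is needed for the identity itself.
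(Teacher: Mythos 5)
Your proof is correct and is exactly the standard coordinatewise derivation that the paper implicitly relies on: Lemma \ref{vito} is stated without proof as the classical It\^o formula (the vector version, Lemma \ref{mito}, is simply cited to Kloeden--Platen), so there is nothing in the paper to diverge from. Your identification of $d\left<X^i,X^l\right>_u=S_{il}(X_u)\,du$ and the regrouping into $\left<\nabla F\,|\,\mu\right>$, $\left<\nabla F\,|\,\nu\,dW\right>$ and $\frac12\Tr[S\,\nabla(\nabla F)]$ are all accurate.
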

In the proofs of this paper we often use It\^o isometry 
\cite[Corollary 3.1.7]{oks}, but for vector and matrix random processes. For this reason we formulate it here.
\begin{lemma}\label{iso}
If $M : [0,T]\times \Omega \rightarrow \mathbb{R}^{k\times k}$ is a matrix random process and $V : [0,T]\times \Omega \rightarrow \mathbb{R}^k$ is a vector random process, both adapted to the natural filtration of Brownian motion $W$, then the following holds.
\renewcommand{\theenumi}{\roman{enumi}}
\renewcommand {\labelenumi} {(\roman{enumi})}
\begin{enumerate}
\item \label{miito}
$ \mathbb{E}\left[\norm{\int_0^T M_t\,dW_t}_2^2\right]=\mathbb{E}\left[\int_0^T \norm{M_t}_F^2\,dt\right],$
\item \label{viito}
$\mathbb{E}\left[\left(\int_0^T V_t^T\,dW_t\right)^2\right]=\mathbb{E}\left[\int_0^T \norm{V_t}_2^2\,dt\right].$
\end{enumerate}
\end{lemma}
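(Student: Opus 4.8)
The plan is to reduce both identities to the scalar It\^o isometry \cite[Corollary 3.1.7]{oks} by exploiting the componentwise structure of the integrals together with the independence of the coordinates of $W$. Throughout I assume, as is implicit in the statement, that the right-hand sides are finite, so that all stochastic integrals appearing below are genuine $L^2$-martingales; if a right-hand side is infinite, both sides are read in $[0,\infty]$ and the same argument still gives the equality.

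First I would prove (ii). Writing $W=(W^1,\dots,W^k)^T$ and $V_t=(V_t^1,\dots,V_t^k)^T$, linearity of the stochastic integral gives $\int_0^T V_t^T\,dW_t=\sum_{j=1}^k \int_0^T V_t^j\,dW_t^j$. Expanding the square and taking expectations produces diagonal terms $\mathbb{E}[(\int_0^T V_t^j\,dW_t^j)^2]$, which equal $\mathbb{E}[\int_0^T (V_t^j)^2\,dt]$ by the scalar isometry and sum over $j$ to $\mathbb{E}[\int_0^T \norm{V_t}_2^2\,dt]$, together with cross terms $\mathbb{E}[(\int_0^T V_t^i\,dW_t^i)(\int_0^T V_t^j\,dW_t^j)]$ with $i\ne j$. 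For the latter, $M_t^i:=\int_0^t V_s^i\,dW_s^i$ and $M_t^j:=\int_0^t V_s^j\,dW_s^j$ are continuous $L^2$-martingales null at $0$ whose cross-variation $\langle M^i,M^j\rangle$ vanishes, because $W^i$ and $W^j$ are independent; hence $M^iM^j$ is a martingale null at $0$ and $\mathbb{E}[M_T^iM_T^j]=0$. This establishes (ii).

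For (i) I would argue row by row. Let $R_t^{(i)}=(M_t^{i1},\dots,M_t^{ik})^T$ denote the transpose of the $i$-th row of $M_t$, so that the $i$-th coordinate of $\int_0^T M_t\,dW_t$ equals $\int_0^T (R_t^{(i)})^T\,dW_t$. Then
\begin{align*}
\mathbb{E}\left[\norm{\int_0^T M_t\,dW_t}_2^2\right]=\sum_{i=1}^k \mathbb{E}\left[\left(\int_0^T (R_t^{(i)})^T\,dW_t\right)^2\right]=\sum_{i=1}^k \mathbb{E}\left[\int_0^T \norm{R_t^{(i)}}_2^2\,dt\right],
\end{align*}
where the last equality is (ii) applied to each vector process $R^{(i)}$. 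Since $\norm{M_t}_F^2=\sum_{i=1}^k\sum_{j=1}^k (M_t^{ij})^2=\sum_{i=1}^k \norm{R_t^{(i)}}_2^2$, moving the finite sum back across $\mathbb{E}$ and $\int_0^T\,dt$ by Tonelli yields $\mathbb{E}[\int_0^T \norm{M_t}_F^2\,dt]$, which is (i).

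The only step carrying real content is the vanishing of the cross terms in (ii), where the independence of the components of $W$ is essential; the remainder is linearity of the stochastic integral, the scalar isometry, and Tonelli to commute finite sums with $\mathbb{E}$ and Lebesgue integration. One could alternatively deduce the cross-term identity from the scalar isometry applied to combinations of the form $\int_0^T (V_t^i\pm V_t^j)\,d\widetilde W_t$ for a suitable one-dimensional Brownian motion $\widetilde W$, but the cross-variation argument is the cleanest and generalizes painlessly.
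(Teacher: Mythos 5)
Your proof is correct: the componentwise reduction to the scalar It\^o isometry, with the cross terms killed by the vanishing covariation $\langle M^i,M^j\rangle\equiv 0$ of integrals against independent Brownian components, is exactly the standard argument. The paper itself offers no proof of this lemma --- it states it as a known extension of \cite[Corollary 3.1.7]{oks} --- so there is nothing to compare against; your write-up supplies the routine verification the authors omit, and the only caveat (which you already flag) is the implicit square-integrability needed for the stochastic integrals to be $L^2$-martingales.
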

We also use the generalized version of the mean value theorem given in the following lemma.
\begin{lemma}\label{mvt}
Let $n, m \in \mathbb{N}$. Let $U \subseteq \mathbb{R}^n$ be an open set and $f : U \to \mathbb{R}^m$ be a continuously differentiable function. Let $x \in U$ and $h \in \mathbb{R}^n$ be vectors such that for all $t \in [0,\, 1]$ the line segment $x+th$ lies inside the set $U$. It follows that
\begin{align*}
f(x+h)-f(x)= \left(\int_0^1 \nabla f(x+th)\,dt\right) \cdot h,
\end{align*}
where the integral of a matrix is taken componentwise.
\end{lemma}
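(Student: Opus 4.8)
The plan is to reduce this vector-valued identity to the one–dimensional fundamental theorem of calculus by restricting $f$ to the segment. First I would introduce the auxiliary curve $g : [0,1] \to \mathbb{R}^m$ defined by $g(t) = f(x + th)$. The hypothesis that $x + th \in U$ for every $t \in [0,1]$ guarantees that $g$ is well defined, and since $f$ is continuously differentiable on the open set $U$, the composition with the affine map $t \mapsto x + th$ is continuously differentiable on $[0,1]$ (with one-sided derivatives at the endpoints); by the chain rule its derivative is $g'(t) = \nabla f(x+th)\cdot h$, which is continuous in $t$.

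Next I would apply the fundamental theorem of calculus to each of the $m$ component functions $g_i$, which are real-valued and $C^1$ on $[0,1]$, obtaining $g_i(1) - g_i(0) = \int_0^1 g_i'(t)\,dt$ for every $i$. Stacking these coordinatewise gives $f(x+h) - f(x) = g(1) - g(0) = \int_0^1 \nabla f(x+th)\cdot h\,dt$, where the vector integral is understood componentwise by definition.

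Finally I would move the constant vector $h$ outside the integral. Writing the $i$-th component of the integrand as $\sum_{j=1}^n \big(\nabla f(x+th)\big)_{ij}\,h_j$ and using linearity of the finite-dimensional Riemann integral — interchange of a finite sum with an integral, and pulling out the constants $h_j$ — the $i$-th component of the right-hand side equals $\sum_{j=1}^n \Big(\int_0^1 \big(\nabla f(x+th)\big)_{ij}\,dt\Big) h_j$, which is exactly the $i$-th component of $\big(\int_0^1 \nabla f(x+th)\,dt\big)\cdot h$. This yields the claimed identity.

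There is no serious obstacle here; the only points that deserve a word of care are that the segment remaining inside the open set $U$ is precisely what makes $g$ differentiable at every $t \in [0,1]$, so that the chain rule applies along the whole path, and that the matrix- and vector-valued integrals are taken entrywise, so that the fundamental theorem of calculus and the linearity manipulations can legitimately be carried out coordinate by coordinate.
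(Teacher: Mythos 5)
Your proof is correct and complete: the reduction to the scalar fundamental theorem of calculus via $g(t)=f(x+th)$, the chain rule along the segment, and the componentwise linearity argument to pull $h$ out of the integral are all sound. The paper states this lemma as a known auxiliary result without proof, and your argument is the standard one that would be given.
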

For fixed $T>0$, let $\left(\Omega, \mathcal{F}_T, (\mathcal{F})_{0 \leq t \leq T}, \mathbb{P}\right)$ be a given filtered probability space and $\left(\tilde{\Omega}, \tilde{\mathcal{F}}_T, (\tilde{\mathcal{F}})_{0 \leq t \leq T}, \tilde{\mathbb{P}}\right)$ be an \textit{extension} of this space. The extension is called \textit{very good} if all martingales on the initial space are also martingales on the extension \cite{jacod1997}. Let $D$ be a Polish space. In this paper $D$ will be the Skorokhod space $D([0,T], \mathbb{R}^d)$ or $d$-dimensional Euclidean space $\mathbb{R}^d$. Let $(Z_n)_{n \in \mathbb{N}}$ be a sequence of random vectors with values in $D$, defined on the initial space $\left(\Omega, \mathcal{F}_T, (\mathcal{F})_{0 \leq t \leq T}, \mathbb{P}\right)$, and let $Z$ be a random vector also with values in $D$ defined on the extension. We say that $(Z_n)_n$ \textit{converges stably in law} to $Z$, $Z_n \overset{st}{\Rightarrow} Z$, if
\begin{align*}
\lim_{n \to +\infty} \mathbb{E}\left[Yf(Z_n)\right]=\tilde{\mathbb{E}}\left[Yf(Z)\right]
\end{align*}
for all bounded continuous functions $f:D \to \mathbb{R}$ and all bounded random variable $Y$ on $\left(\Omega, \mathcal{F}_T, (\mathcal{F})_{0 \leq t \leq T}, \mathbb{P}\right)$ \cite{jacprot}.\par
We say that an $\mathbb{R}^d$-valued random vector $Y$ has \textit{mixed normal distribution} with $\mathcal{F}_T$-measurable random covariance matrix $C=(C_{jl})$ and write $Y \sim MN(0,C)$ if
\begin{align*}
\mathbb{E}\left[e^{i\left<t\,|\,Y\right>}|\mathcal{F}_T\right]=e^{-\frac{1}{2}\sum_{j,l=1,\dots,d}t_j t_l C_{jl}}.
\end{align*}
Moreover, if $Y \sim MN(0,C)$, then $Y$ has the same distribution as $\sqrt{C}Z$ where $\sqrt{C}$ is a square symmetric root of $C$ and $Z$ is a standard normal random vector independent of $\mathcal{F}_T$.\par
Let us denote by $A_n^t=\max\{j; 0 \leq j \leq n \text{ and } t_j\leq t\}$ and $\mathcal{F}_{n,i}\coloneqq \mathcal{F}_{t_i}, i=0,1,\dots, n$.\par
For a $\mathbb{R}^d$-valued process $(Y_t)_{1\leq t \leq T}$ we say that it is a \textit{centered Gaussian process} if for every $0\leq t_1 < t_2 < \cdots < t_l \leq T$ the random vector $(Y_{t_1}, \dots, Y_{t_l}) \in \mathbb{R}^{dl}$ has $dl$-dimensional normal distribution, and $\mathbb{E}\left[Y_t\right]=\mathbf{0}_d, t \in [0,T]$ \cite{oks}.\par
The decisive role in the future analysis is played by Theorem 3-2 in \cite{jacod1997}, which has been adapted in its notation for our paper.
\begin{theorem}\label{jac}
Let $W$ be a $k$-dimensional Brownian motion on $[0, T]$ and $\chi_i^n$ be $\mathcal{F}_{n,i}$-measurable square-integrable random vectors in $\mathbb{R}^d$. Assume that $C=(C_{jl})$ is a continuous adapted process defined on $(\Omega, \mathcal{F}_T, \mathbb{F}=(\mathcal{F}_t)_{0\leq t \leq T}, \mathbb{P})$, and $(C_{jl}(t))$ is a positive semidefinite symmetric $d \times d$ matrix for every $t \in \left[0,T\right]$. Assume also that:
\renewcommand{\theenumi}{\roman{enumi}}
\renewcommand {\labelenumi} {(\roman{enumi})}
\begin{enumerate}
\item \label{ppt1}
$\sup_{0 \leq t \leq T}\norm{ \sum_{i=1}^{A_n^t} \mathbb{E}\left[\chi_i^n | \mathcal{F}_{n,i-1}\right]}_2 \overset{\mathbb{P}}{\to} 0,$
\item \label{ppt2}
$\sum_{i=1}^{A_n^t}\left(\mathbb{E}\left[\chi_i^{n,j} \chi_i^{n,l}|\mathcal{F}_{n,i-1}\right]-\mathbb{E}\left[\chi_i^{n,j}| \mathcal{F}_{n,i-1}\right]\mathbb{E}\left[\chi_i^{n,l}|\mathcal{F}_{n,i-1}\right]\right) \overset{\mathbb{P}}{\to} C_{jl}(t),\notag\\ \forall t \in [0,T], \quad j,l=1, \dots, d,$
\item \label{ppt3}
$\sum_{i=1}^{A_n^t} \mathbb{E}\left[\chi_i^n \left(W_{t_i}-W_{t_{i-1}}\right)^T|\mathcal{F}_{n,i-1}\right]\overset{\mathbb{P}}{\to} \mathbf{0}_{d \times k}, \forall t \in [0,T],$
\item \label{ppt4}
$\sum_{i=1}^{n} \mathbb{E}\left[\norm{\chi_i^n}_2^2 \mathbbm{1}_{\norm{\chi_i^n}_2>\epsilon} | \mathcal{F}_{n,i-1}\right]\overset{\mathbb{P}}{\to} 0, \forall \epsilon >0,$
\item \label{ppt5}
$\sum_{i=1}^{A_n^t}\mathbb{E}\left[\chi_i^n \left(N_{t_i}-N_{t_{i-1}}\right)| \mathcal{F}_{n,i-1}\right]\overset{\mathbb{P}}{\to} \mathbf{0}_d, \forall t \in [0, T],$
\end{enumerate}
where $N$ is a bounded $\mathcal{F}_t$-martingale orthogonal to $W$, i.e. $N_0=0$ and for all $ m=1,2,\dots,k$, $[N,W^{m}]_t \equiv 0$.\par Then there is a very good extension $\left(\tilde{\Omega}, \tilde{\mathcal{F}}, \tilde{\mathbb{F}}=(\tilde{\mathcal{F}_t})_{0\leq t\leq T}, \tilde{\mathbb{P}}\right)$ of the space\\ $\left(\Omega, \mathcal{F}_T, \mathbb{F}=(\mathcal{F}_t)_{0\leq t \leq T}, \mathbb{P}\right)$ and a continuous process $Y$ defined on that extension that is, conditionally on $\mathcal{F}_T$, centered Gaussian process with independent increments satisfying $\tilde{\mathbb{E}}\left[Y_t^j Y_t^l | \mathcal{F}_T\right]=C_{jl}(t)$, $t \in [0,T]$, $j,l=1,\dots,d$, and such that
\begin{align*}
\sum_{i=1}^{A_n^t} \chi_i^n \overset{st}{\Rightarrow} Y \quad \text{ on } D([0, T], \mathbb{R}^d).
\end{align*}
\end{theorem}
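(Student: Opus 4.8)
This is Theorem~3-2 in \cite{jacod1997}, restated in our notation, so we only outline the strategy. The statement is a functional stable central limit theorem for the triangular array $(\chi_i^n)$, and the idea is to reduce it to a stable martingale central limit theorem and then to identify the limit.

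\textbf{Reduction to a martingale difference array.} Write $\chi_i^n=\mathbb{E}[\chi_i^n\mid\mathcal{F}_{n,i-1}]+\bar\chi_i^n$ with $\bar\chi_i^n:=\chi_i^n-\mathbb{E}[\chi_i^n\mid\mathcal{F}_{n,i-1}]$. By assumption (i) the partial sums of the predictable part converge to $0$, uniformly in $t\in[0,T]$ and in probability, so it suffices to establish $\sum_{i=1}^{A_n^t}\bar\chi_i^n\overset{st}{\Rightarrow}Y$. For fixed $n$ the process $M^n_t:=\sum_{i=1}^{A_n^t}\bar\chi_i^n$ is an $\mathbb{R}^d$-valued purely discontinuous martingale for the discrete filtration $(\mathcal{F}_{n,i})_i$, with jump $\bar\chi_i^n$ at time $t_i$. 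Its predictable quadratic covariation is $\langle M^{n,j},M^{n,l}\rangle_t=\sum_{i=1}^{A_n^t}\big(\mathbb{E}[\chi_i^{n,j}\chi_i^{n,l}\mid\mathcal{F}_{n,i-1}]-\mathbb{E}[\chi_i^{n,j}\mid\mathcal{F}_{n,i-1}]\mathbb{E}[\chi_i^{n,l}\mid\mathcal{F}_{n,i-1}]\big)$, which is exactly the sum in assumption (ii); likewise, since $W$ and any bounded $\mathcal{F}_t$-martingale $N$ with $N_0=0$ have $\mathcal{F}_{n,i-1}$-conditionally centered increments, one gets $\langle M^{n,j},W^m\rangle_t=\sum_{i=1}^{A_n^t}\mathbb{E}[\bar\chi_i^{n,j}(W^m_{t_i}-W^m_{t_{i-1}})\mid\mathcal{F}_{n,i-1}]$ equal to the sum in (iii), and similarly for (v). Thus (ii)--(v) are precisely statements about the brackets of $M^n$.

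\textbf{Tightness and identification of the limit.} By (ii), $\langle M^{n,j},M^{n,l}\rangle_t\overset{\mathbb{P}}{\to}C_{jl}(t)$ for each $t$, and by (iv) the array satisfies a Lindeberg condition, so the jumps of $M^n$ are asymptotically negligible; by the standard tightness criterion for sequences of local martingales (convergence, hence tightness, of the brackets together with asymptotic negligibility of the jumps), $(M^n)$ is tight in $D([0,T],\mathbb{R}^d)$ and every limit point is continuous with quadratic covariation $C$. Conditions (iii) and (v) say that $\langle M^{n,j},W^m\rangle_t\overset{\mathbb{P}}{\to}0$ and $\langle M^{n,j},N\rangle_t\overset{\mathbb{P}}{\to}0$ for every bounded martingale $N$ orthogonal to $W$, so every limit point is, jointly with $W$ and such $N$, a continuous martingale orthogonal to $W$ and to the whole subspace of $\mathbb{F}$-martingales orthogonal to $W$.

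\textbf{Stability and the conditional law.} The remaining and most delicate point is to upgrade convergence in law to stable convergence and to pin down the law of $Y$ conditionally on $\mathcal{F}_T$. One tests $\mathbb{E}[\,Y\,g(M^n)\,]$ against bounded $\mathcal{F}_T$-measurable $Y$ and bounded continuous $g$, using the joint weak convergence of $(M^n,W,N)$ and characterising the limit through its $\mathcal{F}_T$-conditional characteristic function. Since $W$ together with the bounded martingales orthogonal to it generate $\mathbb{F}$, the asymptotic orthogonality relations of the previous step force the limit to be, conditionally on $\mathcal{F}_T$, a centered Gaussian process with independent increments and covariance $\tilde{\mathbb{E}}[Y_t^jY_t^l\mid\mathcal{F}_T]=C_{jl}(t)$; such a process is not in general carried by $(\Omega,\mathcal{F}_T,\mathbb{P})$ itself, which is why one passes to a very good extension. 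The main obstacle is exactly this passage: controlling the joint limiting behaviour of the array together with all martingales of the original filtration so that asymptotic orthogonality in the pre-limit becomes conditional independence in the limit. The full argument is carried out in \cite{jacod1997}.
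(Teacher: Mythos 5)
The paper offers no proof of this statement: it is Theorem 3-2 of Jacod (1997), quoted with only notational adaptation, so there is no in-paper argument to compare yours against. Your sketch correctly identifies the standard route of Jacod's proof --- centering via (i), reading (ii), (iii), (v) as convergence of the predictable brackets of the discrete martingale $M^n$ with itself, with $W$, and with bounded martingales orthogonal to $W$, using (iv) for the Lindeberg/tightness step, and then passing to a very good extension to realise the conditionally Gaussian limit --- and is consistent with the cited source.
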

Let $(\gamma_n)_{n \in \mathbb{N}}$ be a sequence of positive numbers and let $(Y_n)_n$ be a sequence of random vectors  defined on some probability space. We say that $(Y_n)_n$ is of order $O_{\mathbb{P}}(\gamma_n)$, and write $Y_n=O_{\mathbb{P}}(\gamma_n)$, if the sequence $(Y_n/\gamma_n)_{n \in \mathbb{N}}$ is bounded in probability, i.e. if
\begin{align*}
\lim_{A \to +\infty} \overline{\lim\limits_{n}}\, \mathbb{P}\{\gamma_n^{-1}\norm{Y_n}_2>A\}=0.
\end{align*}

\section{Estimation method}
Using the Euler discretization scheme, for $l=1,2, \dots, k$ we obtain the system of stochastic difference equations \cite{klopla}:
\begin{align*}
Z_{t_i}^{l}-Z_{t_{i-1}}^{l}=\mu_l(Z_{t_{i-1}}, \theta)(t_i-t_{i-1})+\sum_{j=1}^{k} 
\nu_{lj}(Z_{t_{i-1}})\left(W_{t_i}^j-W_{t_{i-1}}^j\right).
\end{align*}
If we denote by $\Delta W^j_i \coloneqq W_{t_i}^j-W_{t_{i-1}}^j \sim N(0, \Delta_n)$, then $\Delta W^{j_1}_{i_1}$ and $\Delta W^{j_2}_{i_2}$ are independent for every pair of indices $j_1$ and $j_2$ such that $j_1 \neq j_2$, and for every $i_1$ and $i_2$ such that $i_1\neq i_2$ in case when $j_1 =j_2$. If matrix $S$ is a regular matrix and Hermitian, then the log-likelihood function of the process $Z$ is
\begin{align*}
\ln{L_n(\theta)}&\coloneqq-\ln \left(\prod_{i=1}^n \sqrt{2\pi \Delta_n}^k\sqrt{\det (S(Z_{t_{i-1}}))}\right)-\\&-\frac{1}{2\Delta_n}\sum_{i=1}^n \bigl<S^{-1}(Z_{t_{i-1}}) \left( Z_{t_i}-Z_{t_{i-1}}-\Delta_n\mu(Z_{t_{i-1}}, \theta)\right)\,| \,\\ &
Z_{t_i}-Z_{t_{i-1}}-\Delta_n\mu(Z_{t_{i-1}}, \theta)\bigr>.
\end{align*}
If we substitute $Z_i$ with $X_i\equiv X_{t_{i}}$, for every $i=0,1,\dots,n$, use the fact that matrix $S$ is Hermitian, and omit the constant that does not depend on $\theta$, we get an approximate log-likelihood function of the form
\begin{align}\label{lln}
\ell_n(\theta)& \coloneqq\sum_{i=1}^n \left<S^{-1}(X_{i-1})\mu(X_{i-1}, \theta)\,|\, X_i-X_{i-1}\right>-\\\notag
&-\frac{1}{2}\Delta_n\sum_{i=1}^n \left<S^{-1}(X_{i-1})\mu(X_{i-1}, \theta)\,|\, \mu(X_{i-1}, \theta)\right>.
\end{align}
If there is a point of global maximum $\overline{\theta}_n$ of function $\ell_n$, we call it an approximate maximum likelihood estimator (AMLE) of parameter $\theta$.

\section{Assumptions and main theorems}
Let $O^{\Theta}\subseteq \mathbb{R}^d$ be an open set such that $Cl(\Theta)\subseteq O^{\Theta}$. We assume the following hypothesis:
\begin{itemize}
\item[{\bf A1}] (uniform ellipticity on compact sets)
For every compact set $K \subseteq E$ there exists a real constant $\lambda_K > 0$ such that for all $x \in K$ and $\xi \in \mathbb{R}^k$
\begin{align*}
\left<S(x)\xi\,|\, \xi\right> \geq \lambda_K \norm{\xi}_2^2.
\end{align*}
Especially, matrix $S(x)$ is positive definite (and so regular) for every $x\in E$.
\item[{\bf A2}]
For all $p,r=1,2, \dots, k$ and all $(x,\theta )\in E\times O^{\Theta}$ there exist partial derivatives $\frac{\partial}{\partial x_p}\mu(x,\theta)$ and $\frac{\partial^2}{\partial x_p \partial x_r}\mu(x,\theta)$. Moreover,
functions $(x,\theta) \mapsto\mu(x,\theta)$, $(x,\theta)\mapsto \frac{\partial}{\partial x_p}\mu(x,\theta)$, $(x,\theta)\mapsto \frac{\partial^2}{\partial x_p \partial x_r}\mu(x,\theta)$ are continuous functions on $E \times Cl(\Theta)$, for all $p,r=1,2, \dots, k$.
\item[{\bf A3}] For all $i,j=1,2,\ldots, d$, all $p,r=1,2, \dots, k$ and all $(x,\theta )\in E\times O^{\Theta}$ there exist partial derivatives $\frac{\partial}{\partial \theta_i} \mu(x,\theta)$, $\frac{\partial^2}{\partial x_p \partial \theta_i}\mu(x,\theta)$, $\frac{\partial^3}{\partial x_r \partial x_p \partial \theta_i}\mu(x, \theta)$,
     and $\frac{\partial^2}{\partial \theta_i\partial \theta_j} \mu(x,\theta)$, $\frac{\partial^3}{\partial x_p \partial \theta_i\partial \theta_j}\mu(x,\theta)$.
      Moreover,
functions $(x, \theta)\mapsto\frac{\partial}{\partial \theta_i} \mu(x,\theta)$, $(x,\theta) \mapsto \frac{\partial^2}{\partial x_p \partial \theta_i}\mu(x,\theta)$, $(x,\theta) \mapsto \frac{\partial^3}{\partial x_r \partial x_p \partial \theta_i}\mu(x, \theta)$, and 
$(x, \theta) \mapsto \frac{\partial^2}{\partial \theta_i\partial \theta_j} \mu(x,\theta)$, $(x,\theta) \mapsto \frac{\partial^3}{\partial x_p \partial \theta_i\partial \theta_j}\mu(x,\theta)$ are continuous on $E\times Cl(\Theta)$, for all $p,r=1,2, \dots,k$ and all $i,j=1,2,\dots,d$.
\item[{\bf A4}]
For all $p,r=1,2,\ldots,k$ and all $x\in E$ there exist partial derivatives $\frac{\partial}{\partial x_p}\nu(x)$, and
$\frac{\partial^2}{\partial x_r \partial x_p}\nu(x)$. Moreover,
functions $x \mapsto\nu(x)$, $x \mapsto \frac{\partial}{\partial x_p}\nu(x)$, and $x \mapsto \frac{\partial^2}{\partial x_r \partial x_p}\nu(x)$ are continuous on $E$, for all $p,r=1,2,\dots,k$.
\end{itemize}
Assumptions (A1-2) and (A4) imply that there exists a strong and by-path-unique solution to the SDE (\ref{sde}) on time-interval $[0,T]$ (see \cite{khasminskii}), and (A2-4) enable applications of the It\^{o} formula on the drift and diffusion coefficient functions. Notice that (A2-3) implies that all other partial derivatives of orders less or equal 3 exists and among them all partial derivatives with respect to the same set of variable directions are mutually equal. For example, it turns out that
\[ \frac{\partial^3}{\partial \theta_i\partial x_p\partial \theta_j}\mu(x,\theta)=
\frac{\partial^3}{\partial \theta_i\partial \theta_j\partial x_p}\mu(x,\theta)=
\frac{\partial^3}{\partial x_p\partial\theta_i\partial \theta_j}\mu(x,\theta)\]
for all $(x,\theta )\in E\times Cl(\Theta)$ and all $p=1,\ldots,k$ and all $i,j=1,2,\ldots,d$, and all these functions are continuous on $E\times Cl(\Theta)$.

Since matrix $S(x)$ is regular by (A1), 
the log-likelihood function of \eqref{sde} is given by \cite[Theorem 6.4.3]{strvar}
\begin{align*}
\ell(\theta)&\coloneqq \int_0^T \left< S^{-1}(X_s)\mu(X_s,\theta)\,|\, dX_s\,\right>-\\&
-\frac{1}{2}\int_0^T \left<\mu(X_s, \theta)\,|\, S^{-1}(X_s)\mu(X_s, \theta)\right>\,ds.
\end{align*}
By the next assumption we will be able to prove that log-likelihood function $\ell$ is two-times continuously differentiable on $\Theta$, i.e. $\ell\in C^2 (\Theta )$ (see Theorem \ref{tm_nepr} below).
\begin{itemize}
\item[{\bf A5}] For all $(x,\theta) \in E \times O^{\Theta}$ and $m \leq d+3$ there exist $D_{\mathbf{j}}^m \mu(x,\theta)$ and $\frac{\partial}{\partial x_p} D_{\mathbf{j}}^m \mu(x,\theta)$, for all $p=1,\dots,k$ and all $\mathbf{j}=\left[j_1,j_2,\dots,j_d\right]^T$ such that $m=j_1+\dots+j_d$. Moreover, these functions are continuous on $E \times Cl(\Theta)$.
\end{itemize}

\begin{theorem}\label{tm_nepr}
Assume that (A1) and (A4-5) hold. Then the log-likelihood function $\ell\in C^2(\Theta )$, and
\begin{align*}
\frac{\partial}{\partial\theta_i}\ell(\theta)& = \int_0^T \left< S^{-1}(X_s)\frac{\partial}{\partial\theta_i}\mu(X_s,\theta)\,|\, dX_s\,\right>-\\&
-\int_0^T \left<\frac{\partial}{\partial\theta_i}\mu(X_s, \theta)\,|\, S^{-1}(X_s)\mu(X_s, \theta)\right>\,ds\\
\frac{\partial^2}{\partial\theta_i\partial\theta_j}\ell(\theta)& = \int_0^T \left< S^{-1}(X_s)\frac{\partial^2}{\partial\theta_i\partial\theta_j}\mu(X_s,\theta)\,|\, dX_s\,\right>-\\&
-\int_0^T \left<\frac{\partial^2}{\partial\theta_i\partial\theta_j}\mu(X_s, \theta)\,|\, S^{-1}(X_s)\mu(X_s, \theta)\right>\,ds -\\&
-\int_0^T \left<\frac{\partial}{\partial\theta_i}\mu(X_s, \theta)\,|\, S^{-1}(X_s)\frac{\partial}{\partial\theta_j}\mu(X_s, \theta)\right>\,ds
\end{align*}
for all $i,j=1,2,\ldots,d$.
\end{theorem}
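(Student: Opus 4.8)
\emph{Plan.} The idea is to differentiate under the integral sign, twice, treating the ordinary (Lebesgue) integral and the It\^o integral in $\ell(\theta)$ separately, after first localizing so that the path of $X$ stays inside a fixed compact subset of $E$. First I would fix an exhausting sequence of compact convex sets $K_m\uparrow E$ and the stopping times $\tau_m:=\inf\{t\ge0:X_t\notin K_m\}\wedge T$, so that $\tau_m\to T$ a.s.; it then suffices to prove the statement for the stopped process with all integrals taken over $[0,\tau_m]$ and to let $m\to\infty$, since on $\{\tau_m=T\}$ the stopped and unstopped expressions agree and $\bigcup_m\{\tau_m=T\}$ has full probability. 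So I may assume that $X$ takes values in a fixed compact $K\subseteq E$. By (A1) the matrix $S$ is positive definite, hence regular, on $E$, and by (A4) $x\mapsto S(x)$ is continuous, so by the adjugate formula $x\mapsto S^{-1}(x)$ is continuous on $E$; together with (A5) this makes $S^{-1}$, $\mu$ and all $\theta$-partial derivatives $D_{\mathbf{j}}^{m}\mu$ of order $m\le d+3$ continuous, hence bounded, on $K\times Cl(\Theta)$, and Lemma \ref{mvt} turns the bounds on the first $\theta$-derivatives into Lipschitz-in-$\theta$ bounds uniform in $x\in K$.

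\emph{The Lebesgue part.} Writing $dX_s=\mu(X_s,\theta_0)\,ds+\nu(X_s)\,dW_s$ in $\ell(\theta)$, the non-stochastic contribution is
\begin{align*}
\ell_1(\theta):=\int_0^T\Big(&\big\langle S^{-1}(X_s)\mu(X_s,\theta)\,\big|\,\mu(X_s,\theta_0)\big\rangle\\ &-\tfrac12\big\langle\mu(X_s,\theta)\,\big|\,S^{-1}(X_s)\mu(X_s,\theta)\big\rangle\Big)\,ds.
\end{align*}
Since $S^{-1}(X_s)$ does not depend on $\theta$, the integrand is, for a.e.\ $\omega$, of class $C^2$ in $\theta$ with $\theta$-derivatives up to order two continuous and bounded on $[0,T]\times Cl(\Theta)$, so the classical dominated-convergence theorem for parameter-dependent integrals gives $\ell_1\in C^2(\Theta)$ with derivatives obtained by differentiating the integrand; the product rule on the quadratic term produces precisely the cross term $-\int_0^T\langle\partial_{\theta_i}\mu(X_s,\theta)\,|\,S^{-1}(X_s)\partial_{\theta_j}\mu(X_s,\theta)\rangle\,ds$ that appears in the stated formula for $\partial_{\theta_i\theta_j}\ell$.

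\emph{The It\^o part.} It remains to handle $\ell_2(\theta):=\int_0^T H_s(\theta)^T\,dW_s$ with $H_s(\theta):=\nu(X_s)^TS^{-1}(X_s)\mu(X_s,\theta)\in\mathbb{R}^k$. For each multi-index $\mathbf{j}$ with $m=|\mathbf{j}|\le d+3$ the process $s\mapsto D_{\mathbf{j}}^{m}H_s(\theta)$ is adapted, jointly continuous in $(s,\theta)$ and bounded on $[0,T]\times Cl(\Theta)$, so $\int_0^T(D_{\mathbf{j}}^{m}H_s(\theta))^T\,dW_s$ is a well-defined square-integrable random variable. Using Lemma \ref{iso}(\ref{viito}), Lemma \ref{mvt} and the Burkholder--Davis--Gundy inequality I would obtain, for all $\theta,\theta'\in Cl(\Theta)$, all $m\le d+2$ and all $p\ge2$,
\begin{align*}
\mathbb{E}\Big|\int_0^T\big(D_{\mathbf{j}}^{m}H_s(\theta)-D_{\mathbf{j}}^{m}H_s(\theta')\big)^{T} dW_s\Big|^{p}\le C_{m,p}\,\norm{\theta-\theta'}_2^{p},
\end{align*}
so Kolmogorov's continuity theorem in $d$ parameters gives each field $\theta\mapsto\int_0^T(D_{\mathbf{j}}^{m}H_s(\theta))^T dW_s$ a jointly continuous modification. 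To identify these as derivatives I would invoke the stochastic Fubini theorem: writing $\theta^{(u)}$ for $\theta$ with its $i$-th coordinate replaced by $u$,
\begin{align*}
\int_{\theta_i'}^{\theta_i}\!\Big(\int_0^T\partial_{\theta_i}H_s(\theta^{(u)})^T dW_s\Big)du&=\int_0^T\Big(\int_{\theta_i'}^{\theta_i}\!\partial_{\theta_i}H_s(\theta^{(u)})\,du\Big)^{\!T} dW_s\\ &=\ell_2(\theta^{(\theta_i)})-\ell_2(\theta^{(\theta_i')}),
\end{align*}
and since the outer integrand on the left is continuous in $u$, the fundamental theorem of calculus yields $\partial_{\theta_i}\ell_2(\theta)=\int_0^T\partial_{\theta_i}H_s(\theta)^T dW_s$; applying the same argument to $\partial_{\theta_i}\ell_2$ in place of $\ell_2$ then gives $\ell_2\in C^2(\Theta)$ with $\partial_{\theta_i\theta_j}\ell_2(\theta)=\int_0^T\partial_{\theta_i\theta_j}H_s(\theta)^T dW_s$. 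Re-writing $\nu(X_s)\,dW_s=dX_s-\mu(X_s,\theta_0)\,ds$, adding $\ell_1$ and $\ell_2$, and removing the localization produces the claimed formulas.

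\emph{Main obstacle.} The delicate step is the It\^o part: producing a single version of $\theta\mapsto\int_0^T H_s(\theta)^T dW_s$ that is genuinely of class $C^2$ in $\theta$, with derivatives again It\^o integrals, rather than merely verifying differentiability for each fixed $\theta$ outside a $\theta$-dependent null set. This is what the generous number of $\theta$-derivatives in (A5) is for: the $L^p$ H\"older estimates above must feed Kolmogorov's theorem in $d$ parameters (which requires $p>d$), and obtaining continuous second derivatives costs two extra orders of differentiation of $H$; the stochastic Fubini theorem together with the fundamental theorem of calculus then bridges from an ``$L^2$-sense'' derivative to a bona fide pathwise derivative of a continuous function. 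Carrying the non-compact state space $E$ through the stopping-time localization, and checking the joint measurability and adaptedness needed for stochastic Fubini, are the remaining routine points.
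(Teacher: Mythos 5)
Your argument is correct, but it takes a genuinely different route from the paper's. The paper proves Theorem \ref{tm_nepr} in one line by writing $dX_s=\mu(X_s,\theta_0)\,ds+\nu(X_s)\,dW_s$ (so the time-integral part is handled by ordinary differentiation under the integral sign, exactly as in your ``Lebesgue part'') and then invoking Theorem \ref{nepr} for the It\^o part; that theorem is proved by expanding $f(x,\theta)$ in a Fourier series in $\theta$, showing via Lemmas \ref{fk} and \ref{fk3} that the series of stochastic integrals $\sum_{\mathbf{k}}\int_0^T C_{\mathbf{k}}(X_t)\,dB_t\, e^{i\langle\mathbf{k}|\theta\rangle}$ converges absolutely and uniformly on a single event of probability one, and differentiating term by term. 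That route costs $d+2$ orders of $\theta$-smoothness (to make the coefficients summable) and the test-function extension to the torus, but the same Fourier machinery is reused in Theorem \ref{unif_b} and Lemma \ref{konstr}, so it is set up once and amortized. You instead build a single $C^2$ modification of $\theta\mapsto\int_0^T H_s(\theta)^T\,dW_s$ from BDG-type $L^p$ Lipschitz estimates fed into Kolmogorov's continuity criterion in $d$ parameters, and identify the derivatives via stochastic Fubini plus the fundamental theorem of calculus. This is self-contained and, if anything, more economical in smoothness: only three $\theta$-derivatives of $\mu$ are genuinely needed, the dimension dependence being absorbed into the moment order $p>d$, which BDG gives for free once your stopping-time localization makes the integrands bounded (this localization is needed in both approaches and is only implicit in the paper's one-line proof). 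Two points you should make explicit when writing it up, though neither is a gap: the stochastic Fubini identity holds a.s.\ only for each fixed pair of endpoints, so it must first be asserted on a countable dense set of $\theta$'s and then extended to all $\theta$ by continuity of the two modifications; and the de-localization at the end likewise requires identifying the stopped and unstopped continuous versions on $\{\tau_m=T\}$ simultaneously in $\theta$ via a dense set.
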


Now, we can formulate the last assumption:
\begin{itemize}
\item[{\bf A6}]
For all $\omega \in \Omega$, the log-likelihood function $\ell(\theta)\equiv \ell_T(\theta, \omega)$ based on continuous observations of $X$  over $\left[0,T\right]$, as a function $\ell : \Theta \to \mathbb{R}$, has a unique point of global maximum $\hat{\theta}\equiv \hat{\theta}_T(\omega)$, and $D^2 \ell(\hat{\theta})$ is a negatively definite matrix.
\end{itemize}

Since $\Theta$ is a convex space, using the mean value theorem, we get that for all $\theta \in \Theta$
\begin{align*}
D\ell_n(\theta)=D\ell_n(\hat{\theta})+\int_0^1 D^2\ell_n(\hat{\theta}+(\theta-\hat{\theta})z)\,dz \cdot (\theta-\hat{\theta}).
\end{align*}
Especially, on event $\{D\ell_n(\overline{\theta}_n)=0\}$, we have
\begin{align*}
\frac{1}{\sqrt{\Delta_n}}\int_0^1 &D^2 \ell_n(\hat{\theta}+(\overline{\theta}_n-\hat{\theta})z)\,dz \cdot (\overline{\theta}_n-\hat{\theta})=\notag\\
&=\frac{1}{\sqrt{\Delta_n}}\left(D\ell(\hat{\theta})-D\ell_n(\hat{\theta})\right)
\end{align*}
since $D\ell (\hat{\theta})=0$ by (A6).
We are interested in the behavior of the difference between the derivative of the log-likelihood function for continuous observations and its discretized version.

Let us fix $\theta \in \Theta$. For $1 \leq j \leq d$, we have
\begin{align}
\frac{1}{ \sqrt{\Delta_n}}\left( D\ell(\theta)-D\ell_n(\theta)\right)_j&=\frac{1}{\sqrt{\Delta_n}}\sum_{i=1}^n \int_{t_{i-1}}^{t_i} \bigl< S^{-1}(X_s)\partial_j \mu(X_s,\theta)-\label{difference}\\
&-S^{-1}(X_{i-1})\partial_j \mu(X_{i-1}, \theta)\,|\, \mu(X_s, \theta_0)\bigr>\,ds -\label{a1} \\ &-\frac{1}{\sqrt{\Delta_n}}\sum_{i=1}^n \int_{t_{i-1}}^{t_i} \Bigl(\left<\partial_j \mu(X_s, \theta)\,|\, S^{-1}(X_s)\mu(X_s, \theta)\right>-\notag\\
&-\left<\partial_j \mu(X_{i-1}, \theta)\,|\, S^{-1}(X_{i-1})\mu(X_{i-1}, \theta)\right> \Bigr)ds + \label{a2} \\
& + \frac{1}{\sqrt{\Delta_n}} \sum_{i=1}^n \int_{t_{i-1}}^{t_i} \bigl< S^{-1}(X_s)\partial_j \mu(X_s,\theta)-\notag\\
&-S^{-1}(X_{i-1})\partial_j \mu(X_{i-1}, \theta)\,|\, \nu(X_t)
\,dW_s\bigr>\label{a3} .
\end{align}
We introduce the following notation:
\begin{align}\label{gj}
g_j(x, \theta) \coloneqq S^{-1}(x) \partial_j\mu(x, \theta),\;\; (x,\theta )\in E\times\Theta.
\end{align}
First, the difference in \eqref{difference} can be decomposed into two parts. One part is negligible in probability and the other part determines the resulting limit.
\begin{lemma}\label{lemma1}
Assume that (A1-5) hold and that $E$ is a compact set. For arbitrary $\theta \in \Theta$ the standardized difference
\begin{align*}
\frac{1}{\sqrt{\Delta_n}}\left(D\ell(\theta)-D\ell_n(\theta)\right)
\end{align*}
is equal to
\begin{align*}
V_n(\theta)+\frac{1}{\sqrt{\Delta_n}}
\begin{bmatrix}
\sum_{i=1}^n \int_{t_{i-1}}^{t_i} \left<\int_{t_{i-1}}^s \nabla_x g_1(X_u, \theta)\nu(X_u)
\, dW_u\,| \, \nu(X_s)
\, dW_s\right>\\
\vdots\\
\sum_{i=1}^n \int_{t_{i-1}}^{t_i} \left<\int_{t_{i-1}}^s \nabla_x g_d(X_u, \theta)\nu(X_u)
\, dW_u\,| \, \nu(X_s)
\, dW_s\right>
\end{bmatrix},
\end{align*}
where $V_n(\theta) \overset{\mathbb{P}}{\to} \mathbf{0}_d$ when $n \to +\infty$.
\end{lemma}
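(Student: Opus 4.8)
The plan is to treat the three summands \eqref{a1}, \eqref{a2} and \eqref{a3} separately, to extract from \eqref{a3} the double stochastic integral appearing in the statement, and to show that everything else is negligible in probability. Throughout one uses that, since $E$ is compact, the vector field $g_j(\cdot,\theta)$ from \eqref{gj} and the scalar field $h_j(x,\theta)\coloneqq\left<\partial_j\mu(x,\theta)\,|\,S^{-1}(x)\mu(x,\theta)\right>$ are of class $C^2$ in $x$ (the assumptions (A1--5) give the required regularity of $\mu$ and its $\theta$-derivatives in $x$, and $x\mapsto S^{-1}(x)$ is $C^2$ because $\nu$ is $C^2$ and $S(x)$ is invertible), so these functions and all their first- and second-order $x$-derivatives, as well as $\mu(\cdot,\theta_0)$ and $\nu(\cdot)$, are bounded by a constant $C$ on $E$; one also uses the elementary conditional moment bound $\mathbb{E}[\norm{X_s-X_{i-1}}_2^2\,|\,\mathcal{F}_{t_{i-1}}]\leq C(s-t_{i-1})$ for $s\in[t_{i-1},t_i]$, which follows from \eqref{sde}, the conditional It\^o isometry (Lemma~\ref{iso}) and Cauchy--Schwarz, together with the Lipschitz property of $C^1$ functions on the compact $E$.

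For \eqref{a3} I would apply the It\^o formula (Lemma~\ref{mito}) to $F=g_j(\cdot,\theta)$ on $[t_{i-1},s]$, writing $g_j(X_s,\theta)-g_j(X_{i-1},\theta)$ as the drift integral $\int_{t_{i-1}}^s\bigl(\nabla_x g_j(X_u,\theta)\mu(X_u,\theta_0)+\tfrac12\,\nabla_2 g_j(X_u,\theta)\bigr)\,du$ plus the martingale $\int_{t_{i-1}}^s\nabla_x g_j(X_u,\theta)\nu(X_u)\,dW_u$. Substituting this into \eqref{a3} splits it into two pieces: the martingale piece reproduces exactly the $j$-th coordinate of the main term in the statement (with its $1/\sqrt{\Delta_n}$ prefactor), while the drift piece $R_{n,j}\coloneqq\frac1{\sqrt{\Delta_n}}\sum_{i=1}^n\int_{t_{i-1}}^{t_i}\left<A_s^{(i)}\,|\,\nu(X_s)\,dW_s\right>$ has $\norm{A_s^{(i)}}_2\leq C\Delta_n$ a.s. Its summands are martingale increments over the disjoint intervals $[t_{i-1},t_i]$, hence orthogonal in $L^2$, so by the It\^o isometry $\mathbb{E}[R_{n,j}^2]=\frac1{\Delta_n}\sum_i\mathbb{E}\bigl[\int_{t_{i-1}}^{t_i}\norm{\nu(X_s)^TA_s^{(i)}}_2^2\,ds\bigr]\leq\frac1{\Delta_n}\cdot n\cdot C\Delta_n^3=CT\Delta_n\to0$, whence $R_{n,j}\overset{\mathbb{P}}{\to}0$.

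For \eqref{a1}, decompose $\mu(X_s,\theta_0)=\mu(X_{i-1},\theta_0)+\bigl(\mu(X_s,\theta_0)-\mu(X_{i-1},\theta_0)\bigr)$. For the increment part, $\left|\left<g_j(X_s,\theta)-g_j(X_{i-1},\theta)\,|\,\mu(X_s,\theta_0)-\mu(X_{i-1},\theta_0)\right>\right|\leq C\norm{X_s-X_{i-1}}_2^2$, which yields an $L^1$ bound of order $\frac1{\sqrt{\Delta_n}}\cdot n\cdot\Delta_n\cdot C\Delta_n=CT\sqrt{\Delta_n}\to0$. For the frozen part, $\mu(X_{i-1},\theta_0)$ is $\mathcal{F}_{t_{i-1}}$-measurable; expanding $g_j(X_s,\theta)-g_j(X_{i-1},\theta)$ by It\^o as above, the drift contribution is $O(\sqrt{\Delta_n})$ pathwise (bounded integrand, inner integral of length $\Delta_n$, $n$ terms, a factor $\Delta_n$ from the outer $ds$-integral, divided by $\sqrt{\Delta_n}$), while the martingale contribution $\frac1{\sqrt{\Delta_n}}\sum_i\int_{t_{i-1}}^{t_i}\left<\int_{t_{i-1}}^s\nabla_x g_j(X_u,\theta)\nu(X_u)\,dW_u\,\big|\,\mu(X_{i-1},\theta_0)\right>ds$ is, because the inner stochastic integral is a martingale vanishing at $t_{i-1}$ and $\mu(X_{i-1},\theta_0)$ is $\mathcal{F}_{t_{i-1}}$-measurable, a sum of $(\mathcal{F}_{t_i})_i$-martingale differences, hence orthogonal in $L^2$, each of second moment $\leq\Delta_n\int_{t_{i-1}}^{t_i}C\,\mathbb{E}[\norm{\textstyle\int_{t_{i-1}}^s\nabla_x g_j\nu\,dW}_2^2]\,ds\leq C\Delta_n^3$, so the sum is of order $\sqrt{\Delta_n}$ in $L^2$. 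The term \eqref{a2} is handled in the same way, using It\^o for the scalar $h_j$ (Lemma~\ref{vito}): its drift part is $O(\sqrt{\Delta_n})$ pathwise, and the remaining part $-\frac1{\sqrt{\Delta_n}}\sum_i\int_{t_{i-1}}^{t_i}\bigl(\int_{t_{i-1}}^s\left<\nabla_x h_j(X_u,\theta)\,|\,\nu(X_u)\,dW_u\right>\bigr)ds$ is again a sum of orthogonal martingale differences with second moments of order $\Delta_n^3$, hence of order $\sqrt{\Delta_n}$ in $L^2$.

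Collecting the pieces, $\frac1{\sqrt{\Delta_n}}(D\ell(\theta)-D\ell_n(\theta))$ equals the stated double-integral vector plus a remainder $V_n(\theta)$, which is a finite sum of terms each shown above to converge to zero in $L^1$ or $L^2$; hence $V_n(\theta)\overset{\mathbb{P}}{\to}\mathbf{0}_d$. The main difficulty is organizational rather than conceptual: after the It\^o expansions one must keep careful track of which factors are $\mathcal{F}_{t_{i-1}}$-measurable, so as to recognise either pathwise $O(\sqrt{\Delta_n})$ bounds or sums of orthogonal (martingale-difference) terms whose per-block second moments are of order $\Delta_n^3$. The one genuinely delicate point is the cross term in \eqref{a1}, where a martingale increment is paired with the \emph{non-adapted} coefficient $\mu(X_s,\theta_0)$; this is why that coefficient must first be frozen at $X_{i-1}$ and the resulting Lipschitz remainder estimated in $L^1$ by the quadratic-increment bound.
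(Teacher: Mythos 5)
Your proof is correct and follows the same overall strategy as the paper's: the It\^o formula of Lemma \ref{mito} applied to $g_j(\cdot,\theta)$ on each subinterval isolates the double stochastic integral as the main contribution of \eqref{a3}, the scalar It\^o formula of Lemma \ref{vito} handles \eqref{a2}, and the remaining terms are killed either by pathwise $O(\sqrt{\Delta_n})$ bounds or by $L^2$-orthogonality of martingale differences whose per-block second moments are of order $\Delta_n^3$ (giving $O(\Delta_n)$ for the squared $L^2$-norm after dividing by $\Delta_n$). The one place where you genuinely diverge is \eqref{a1}. The paper applies a \emph{second} It\^o expansion, this time to $x\mapsto\mu(x,\theta_0)$, writing $\mu(X_s,\theta_0)$ as $\mu(X_{i-1},\theta_0)$ plus drift and martingale increments; the resulting martingale-paired-with-martingale term \eqref{a12d} then requires the It\^o product formula and a three-way analysis of the terms \eqref{a12d1}--\eqref{a12d3}. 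You instead freeze $\mu$ at $X_{i-1}$ by hand and estimate the cross term $\left<g_j(X_s,\theta)-g_j(X_{i-1},\theta)\,|\,\mu(X_s,\theta_0)-\mu(X_{i-1},\theta_0)\right>$ directly in $L^1$ via the Lipschitz bound $C\norm{X_s-X_{i-1}}_2^2$ and the moment estimate of Lemma \ref{pl}, obtaining $O(\sqrt{\Delta_n})$ in $L^1$ and bypassing the product formula entirely; the surviving frozen-coefficient martingale term is then exactly the paper's \eqref{a12a}, which you treat identically. Both routes are valid; yours is shorter and more elementary at that step, at the cost of not exhibiting the product-formula computation that the paper reuses later when verifying condition \eqref{ppt2} of Theorem \ref{jac}.
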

\begin{theorem}\label{main1}
Assume that (A1-5) hold. For arbitrary $\theta \in \Theta$ the standardized difference
\begin{align*}
\frac{1}{\sqrt{\Delta_n}}\left(D\ell(\theta)-D\ell_n(\theta)\right) \overset{st}{\Rightarrow} Y(\theta)
\end{align*}
where $Y(\theta) \sim MN(0, \Sigma(\theta))$ and for $j, l=1,\dots, d$, $$\Sigma_{jl}(\theta)=\frac{1}{2}\int_0^T \sum_{p,r=1}^k S_{pr}(X_s) \left<e_r^T \nabla_x g_j(X_s, \theta)\nu(X_s)
\,| \, e_p^T \nabla_x g_l(X_s, \theta)\nu(X_s)
\right>\,ds.$$
\end{theorem}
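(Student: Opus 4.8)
The plan is to prove the statement first on a compact state space and then to remove the compactness by localization; the core is an application of the stable limit theorem for triangular arrays, Theorem~\ref{jac}, to the martingale-type array isolated in Lemma~\ref{lemma1}.

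\emph{Localization.} Since $(X_t)_{0\le t\le T}$ has continuous paths, for $\mathbb{P}$-a.e.\ $\omega$ the trajectory lies in a compact subset of $E$. I would fix an exhaustion $K_1\subseteq K_2\subseteq\cdots$ of $E$ by compact convex sets, set $\Omega_m=\{X_t\in K_m\ \forall t\in[0,T]\}$, note $\mathbb{P}(\bigcup_m\Omega_m)=1$, and observe that on $\Omega_m$ the process $X$, the statistics $\ell,\ell_n,g_j$ and the integrand of $\Sigma(\theta)$ depend on $\mu,\nu$ only through their restrictions to $K_m$, so replacing $\mu,\nu$ outside $K_m$ by extensions for which (A1--5) hold with a compact state space changes none of them on $\Omega_m$. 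Because stable convergence need only be tested against bounded $\mathcal{F}_T$-measurable $Y$, and such $Y$ may be cut off to be supported on a fixed $\Omega_m$, it suffices to prove the theorem for compact $E$; assume this henceforth.

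\emph{Reduction and the easy conditions.} By Lemma~\ref{lemma1}, $\tfrac{1}{\sqrt{\Delta_n}}(D\ell(\theta)-D\ell_n(\theta))=V_n(\theta)+\sum_{i=1}^n\chi_i^n$ with $V_n(\theta)\overset{\mathbb{P}}{\to}\mathbf{0}_d$ and $\chi_i^{n,j}=\tfrac1{\sqrt{\Delta_n}}\int_{t_{i-1}}^{t_i}\langle\int_{t_{i-1}}^s\nabla_x g_j(X_u,\theta)\nu(X_u)\,dW_u\,|\,\nu(X_s)\,dW_s\rangle$. Each $\chi_i^n$ is $\mathcal{F}_{n,i}$-measurable and is a stochastic integral against $W$ over $[t_{i-1},t_i]$ with adapted integrand $\nu(X_s)^T\int_{t_{i-1}}^s\nabla_x g_j(X_u,\theta)\nu(X_u)\,dW_u$ vanishing at $s=t_{i-1}$; on compact $E$ all coefficients and their relevant derivatives are bounded, so $\chi_i^n$ is square-integrable by Lemma~\ref{iso}. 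Adding a term tending to $\mathbf{0}_d$ in probability preserves stable convergence, so it suffices to prove $\sum_{i=1}^n\chi_i^n\overset{st}{\Rightarrow}Y(\theta)$, and I would invoke Theorem~\ref{jac} with $C_{jl}(t)=\tfrac12\int_0^t\sum_{p,r=1}^k S_{pr}(X_s)\langle e_r^T\nabla_x g_j(X_s,\theta)\nu(X_s)\,|\,e_p^T\nabla_x g_l(X_s,\theta)\nu(X_s)\rangle\,ds$, which is continuous, adapted, symmetric, positive semidefinite, and satisfies $C_{jl}(T)=\Sigma_{jl}(\theta)$. Conditions (i) and (v) are then immediate (and the subtracted products of conditional means in (ii) vanish): $\chi_i^n$ being a genuine $W$-martingale increment gives $\mathbb{E}[\chi_i^n\mid\mathcal{F}_{n,i-1}]=\mathbf{0}_d$, and for $N$ a bounded martingale orthogonal to $W$ the covariation $\langle\int\cdot\,dW,N\rangle$ vanishes, so $\mathbb{E}[\chi_i^{n,j}(N_{t_i}-N_{t_{i-1}})\mid\mathcal{F}_{n,i-1}]=0$. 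For the Lindeberg condition (iv) I would estimate $\mathbb{E}\norm{\chi_i^n}_2^4=O(\Delta_n^2)$ by Burkholder--Davis--Gundy and boundedness of the coefficients (a double It\^o integral over an interval of length $\Delta_n$ is of order $\Delta_n$, so $\chi_i^n$ is of order $\sqrt{\Delta_n}$), whence $\sum_{i=1}^n\mathbb{E}\norm{\chi_i^n}_2^4=O(T\Delta_n)\to0$.

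\emph{The main step: conditions (ii) and (iii).} Here I would freeze the coefficients at $t_{i-1}$, writing $\chi_i^{n,j}=\chi_i^{n,j,0}+r_i^{n,j}$ with $\chi_i^{n,j,0}=\tfrac1{\sqrt{\Delta_n}}\int_{t_{i-1}}^{t_i}(W_s-W_{t_{i-1}})^T a_j(X_{i-1})^T\nu(X_{i-1})\,dW_s$, where $a_j(x)=\nabla_x g_j(x,\theta)\nu(x)$. Using the polarized It\^o isometry (Lemma~\ref{iso}) and $\mathbb{E}[(W_s-W_{t_{i-1}})(W_s-W_{t_{i-1}})^T\mid\mathcal{F}_{n,i-1}]=(s-t_{i-1})I_k$ one gets $\mathbb{E}[\chi_i^{n,j,0}\chi_i^{n,l,0}\mid\mathcal{F}_{n,i-1}]=\tfrac{\Delta_n}{2}\Tr[(a_j^T S\,a_l)(X_{i-1})]$, and the identity $\Tr[a_j^T S a_l]=\sum_{p,r}S_{pr}\langle e_r^T a_j\,|\,e_p^T a_l\rangle$ (both sides equal $\sum_m(a_l^T S a_j)_{mm}$) exhibits this summand as the integrand of $\Sigma_{jl}$; hence $\sum_{i=1}^{A_n^t}\mathbb{E}[\chi_i^{n,j,0}\chi_i^{n,l,0}\mid\mathcal{F}_{n,i-1}]$ is a Riemann sum converging along the continuous path to $C_{jl}(t)$. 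The coefficients are $C^1$ in $x$ by (A1--5), hence Lipschitz on the compact $E$, so standard $L^2$-estimates for the SDE together with Lemma~\ref{iso} give $\mathbb{E}[(r_i^{n,j})^2]=O(\Delta_n^2)$ against $\mathbb{E}[(\chi_i^{n,j,0})^2]=O(\Delta_n)$; by Cauchy--Schwarz the cross terms $\chi^0r$ and $rr$ sum to $O(\sqrt{\Delta_n})\to0$, proving (ii). Condition (iii) follows from the same decomposition: for the frozen part $\mathbb{E}[\chi_i^{n,j,0}(W^m_{t_i}-W^m_{t_{i-1}})\mid\mathcal{F}_{n,i-1}]$ is, by the It\^o isometry, a multiple of $\mathbb{E}[W_s-W_{t_{i-1}}]=0$, while the remainder sums to $O(\sqrt{\Delta_n})$.

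\emph{Conclusion and the hard part.} Theorem~\ref{jac} then provides a very good extension carrying a process $Y$ which, conditionally on $\mathcal{F}_T$, is centered Gaussian with independent increments and $\tilde{\mathbb{E}}[Y_t^jY_t^l\mid\mathcal{F}_T]=C_{jl}(t)$, with $\sum_{i=1}^{A_n^t}\chi_i^n\overset{st}{\Rightarrow}Y$ in $D([0,T],\mathbb{R}^d)$; evaluating at the continuity point $t=T$ (continuous mapping for stable convergence, $Y$ being continuous) gives $\sum_{i=1}^n\chi_i^n\overset{st}{\Rightarrow}Y(\theta):=Y_T$, which is conditionally centered Gaussian with covariance $C(T)=\Sigma(\theta)$, i.e.\ $Y(\theta)\sim MN(0,\Sigma(\theta))$; combined with the reduction step this is the claim. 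I expect the main obstacle to be the freezing argument: controlling uniformly in $i$ the error made by replacing the coefficients inside the iterated stochastic integrals by their values at $t_{i-1}$, so that the cross terms vanish in the limit, and checking that the frozen quadratic characteristics form Riemann sums with the stated integral as limit — this is where the full strength of (A1--5), the compactness of $E$, and the It\^o-isometry / Burkholder--Davis--Gundy moment estimates are really used.
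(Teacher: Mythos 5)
Your proposal is correct and follows the paper's skeleton — reduce via Lemma \ref{lemma1} to the array $(\chi_i^n)$, verify the five hypotheses of Theorem \ref{jac} with $C_{jl}(t)$ as stated, evaluate at $t=T$, and remove compactness of $E$ by localization — but the way you verify the substantive conditions (ii), (iii) and (iv) is genuinely different from the paper's. The paper attacks $\mathbb{E}[\chi_i^{n,j}\chi_i^{n,l}\mid\mathcal{F}_{n,i-1}]$ and $\mathbb{E}[\chi_i^{n,j}\Delta W_i^m\mid\mathcal{F}_{n,i-1}]$ head-on with repeated applications of the It\^o product formula: for (iii) it must expand $\tilde\nu_m(X_s)-\tilde\nu_m(X_{i-1})$ by Lemma \ref{mito} and kill several terms separately, and for (ii) it passes through the quantities $J^{(i,j)}$, $D^{(i,j,l)}$ and a mean-value/Riemann-sum argument for the double time integral. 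Your freezing decomposition $\chi_i^{n,j}=\chi_i^{n,j,0}+r_i^{n,j}$ buys a unified and shorter treatment: the frozen conditional covariance is computed in closed form as $\tfrac{\Delta_n}{2}\Tr[a_j^TSa_l](X_{i-1})$ (and your trace identity correctly matches the integrand of $\Sigma_{jl}$), the frozen part of (iii) vanishes exactly because $\mathbb{E}[W_s-W_{t_{i-1}}\mid\mathcal{F}_{n,i-1}]=0$, and (iv) follows from Burkholder--Davis--Gundy instead of the paper's explicit It\^o expansion of $x^2y^2$. The price is that the whole burden moves to the remainder bound $\mathbb{E}[(r_i^{n,j})^2]=O(\Delta_n^2)$, which needs Lipschitz continuity of $\nu$ and $a_j$ on the compact $E$ together with a fourth-moment analogue of Lemma \ref{pl} (i.e.\ $\mathbb{E}\|X_s-X_{i-1}\|_2^4=O(\Delta_n^2)$, again from BDG) — an estimate the paper never has to state; you should make that step explicit rather than calling it ``standard''. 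Your opening localization is stated more loosely than the paper's (which constructs $\mu^{(N)},\nu^{(N)}$, the stopped processes and the bound $B\,\mathbb{P}(T^{(N)}\le T)$ explicitly, and localizes at the end rather than the start), but it is the same argument in content, including the needed observation that $\Sigma^{(N)}(\theta)$ and $\Sigma(\theta)$ agree on the event where the path stays in $E_N$.
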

%
%
%
%
%
%
%
Compactness of $Cl(\Theta )$ implies that is possible to pass MLE $\hat{\theta}$ in the limiting equation in the preceding theorem.
\begin{theorem}\label{main2}
Assume that (A1-6) holds. Assume that $E$ is an open convex set and
$\Theta$ is a convex and relatively compact set. Then, we have
\begin{align*}
\frac{1}{\sqrt{\Delta_n}}\left(D\ell(\hat{\theta})-D\ell_n(\hat{\theta})\right) \overset{st} \Rightarrow Y(\hat{\theta})
\end{align*}
where $Y$ is the same as in Theorem \ref{main1}.
\end{theorem}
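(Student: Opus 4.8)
The plan is to upgrade Theorem \ref{main1} from pointwise (in $\theta$) stable convergence to \emph{functional} stable convergence of the random field $Z_n(\theta):=\frac{1}{\sqrt{\Delta_n}}(D\ell(\theta)-D\ell_n(\theta))$, $\theta\in Cl(\Theta)$, in the Banach space $C(Cl(\Theta),\mathbb{R}^d)$, and then to read off the claim by evaluating the limit field at the random point $\hat{\theta}$. Note first that, by (A3) and (A5), all derivatives of $\mu$ in $\theta$ that enter $Z_n$ are continuous on $E\times Cl(\Theta)$, so $Z_n(\cdot)$ has a continuous extension to the compact set $Cl(\Theta)$; and, by (A6), for every $\omega$ the point $\hat{\theta}(\omega)$ is the argmax of the continuous random function $\ell(\cdot,\omega)$ on $Cl(\Theta)$, so $\hat{\theta}$ is an $\mathcal{F}_T$-measurable, $Cl(\Theta)$-valued random variable on the original space.

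First I would establish joint (finite-dimensional) stable convergence: for any $\theta^{(1)},\dots,\theta^{(M)}\in\Theta$, the stacked vector $(Z_n(\theta^{(1)}),\dots,Z_n(\theta^{(M)}))$ converges stably in law. This follows by applying Theorem \ref{jac} in dimension $Md$ to the stacked arrays $\chi_i^n(\theta^{(a)})$ produced, via Lemma \ref{lemma1}, in the proof of Theorem \ref{main1}: conditions (i)--(v) of Theorem \ref{jac} are verified for the stacked array exactly as for a single parameter, the only genuinely new computation being the cross-covariance blocks, which the bracket computation of Theorem \ref{main1} identifies as
\[
\frac{1}{2}\int_0^T\sum_{p,r=1}^{k} S_{pr}(X_s)\left<e_r^T\nabla_x g_j(X_s,\theta^{(a)})\nu(X_s)\,|\,e_p^T\nabla_x g_l(X_s,\theta^{(b)})\nu(X_s)\right>\,ds .
\]
Consequently the limits $Y(\theta)$ of Theorem \ref{main1} fit together into a process $Y(\cdot)$ which, conditionally on $\mathcal{F}_T$, is a centered Gaussian field with this covariance function; the covariance is jointly continuous in $(\theta^{(a)},\theta^{(b)})$ by (A1) and (A5), so the field admits a continuous modification, which is the $Y(\cdot)$ we use.

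Next I would prove tightness of $\{Z_n(\cdot)\}_n$ in $C(Cl(\Theta),\mathbb{R}^d)$, i.e.\ that $\sup_{\norm{\theta-\theta'}_2\le\delta}\norm{Z_n(\theta)-Z_n(\theta')}_2\to0$ in probability as $\delta\downarrow0$, uniformly in $n$. Since $\mu(x,\cdot)$ is smooth in $\theta$ with all relevant derivatives continuous on $E\times Cl(\Theta)$, the map $\theta\mapsto Z_n(\theta)$ is itself $C^1$ and its $\theta$-derivatives have the same analytic form as $Z_n$, with the functions $g_j$ replaced by their $\theta$-derivatives; the moment estimates already behind Lemma \ref{lemma1} and Theorem \ref{main1}, together with the localization by exit times of $X$ from compact subsets of $E$ (which reduces the open $E$ to the compact setting of Lemma \ref{lemma1}), then bound a moment of order $>d$ of these $\theta$-derivatives uniformly in $n$ and $\theta$, and Lemma \ref{mvt} combined with the Kolmogorov--Chentsov criterion delivers the required modulus-of-continuity estimate. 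Together with the previous step this gives $Z_n(\cdot)\overset{st}{\Rightarrow}Y(\cdot)$ in $C(Cl(\Theta),\mathbb{R}^d)$. Because $\hat{\theta}$ is a random variable on the original space, it is a standard consequence of the definition of stable convergence that $(Z_n(\cdot),\hat{\theta})\overset{st}{\Rightarrow}(Y(\cdot),\hat{\theta})$ on $C(Cl(\Theta),\mathbb{R}^d)\times Cl(\Theta)$; the evaluation map $(\varphi,\theta)\mapsto\varphi(\theta)$ is continuous there, so by the continuous-mapping property of stable convergence
\[
\frac{1}{\sqrt{\Delta_n}}\bigl(D\ell(\hat{\theta})-D\ell_n(\hat{\theta})\bigr)=Z_n(\hat{\theta})\overset{st}{\Rightarrow}Y(\hat{\theta}),
\]
and $Y(\hat{\theta})$ is, conditionally on $\mathcal{F}_T$, $MN(0,\Sigma(\hat{\theta}))$ because $Y$ is parametrized $\mathcal{F}_T$-measurably and its conditional law at $\theta=\hat{\theta}$ is the one from Theorem \ref{main1}.

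I expect the main obstacle to be the tightness step: producing a uniform-in-$n$ modulus-of-continuity estimate in $\theta$ strong enough for tightness in $C(Cl(\Theta),\mathbb{R}^d)$, and in particular checking that the stopping-time localization used to pass from a compact to an open state space $E$ can be carried out uniformly over $\theta\in Cl(\Theta)$ and is compatible with differentiation in $\theta$. The finite-dimensional part is conceptually just the proof of Theorem \ref{main1} run in a larger dimension, and the evaluation argument at the end is soft once the functional stable convergence is in hand.
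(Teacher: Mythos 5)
Your route is genuinely different from the paper's. You upgrade Theorem \ref{main1} to functional stable convergence of the field $Z_n(\cdot)$ in $C(Cl(\Theta),\mathbb{R}^d)$ and then evaluate at $\hat{\theta}$ by continuous mapping. The paper never proves functional convergence: it fixes $v\in\mathbb{R}^d$ and a bounded $\mathcal{F}_T$-measurable $U$, sets $F_n(\theta)=e^{i\left<v\,|\,Z_n(\theta)\right>}-e^{-\frac12\sum v_pv_r\Sigma_{pr}(\theta)}$, covers $Cl(\Theta)$ by finitely many $\delta$-balls, and splits $\mathbb{E}[F_n(\hat{\theta})U]$ into a discretization error plus terms $\mathbb{E}[F_n(\theta_l)U\mathbbm{1}_{\{\hat{\theta}\in K_l\}}]$ that vanish by the \emph{pointwise} Theorem \ref{main1} (with test variable $U\mathbbm{1}_{\{\hat{\theta}\in K_l\}}$). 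The discretization error is controlled by a uniform-in-$n$ bound on $\mathbb{E}\bigl[\sum_m\sup_{\theta}|\partial_m h_{n,l}(\theta)|\bigr]$, which reduces to the $L^1$ bound on $\sup_\theta\norm{\partial_mZ_n(\theta)}_2$ supplied by Theorem \ref{unif_b}. This buys two things your plan has to pay for separately: it needs no joint finite-dimensional convergence over several $\theta$'s (so the entire Theorem \ref{jac} verification, including cross-brackets between $\chi_i^n(\theta^{(a)})$ and $\chi_i^n(\theta^{(b)})$, is never redone in dimension $Md$), and it needs no tightness in $C(Cl(\Theta),\mathbb{R}^d)$, only equicontinuity in expectation of the bounded scalar functionals $h_{n,l}$.

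The one step of your plan I would not accept as written is the tightness argument. Kolmogorov--Chentsov over a $d$-dimensional parameter set requires $\mathbb{E}\norm{Z_n(\theta)-Z_n(\theta')}_2^{a}\leq C\norm{\theta-\theta'}_2^{\,d+b}$ with $a>d$, uniformly in $n$; the estimates behind Lemma \ref{lemma1} and Theorem \ref{main1} are $L^2$ (and one $L^4$) bounds, so for $d\geq 2$ they do not deliver moments of order $>d$ of the dominant iterated stochastic integrals, and producing them would require a separate Burkholder--Davis--Gundy computation. The repair is exactly the paper's estimate: Theorem \ref{unif_b} gives $\mathbb{E}\bigl[\sup_{\theta\in\Theta}\norm{\partial_m Z_n(\theta)}_2\bigr]\leq K_m$ uniformly in $n$, whence by Lemma \ref{mvt} the modulus of continuity satisfies $\omega_{Z_n}(\delta)\leq\delta\sum_m\sup_\theta\norm{\partial_mZ_n(\theta)}_2$ and $\mathbb{P}(\omega_{Z_n}(\delta)>\eta)\leq K\delta/\eta$, which is already the Arzel\`a--Ascoli tightness criterion --- no Kolmogorov--Chentsov and no high moments needed. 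With that substitution, and with the stacked-array verification of Theorem \ref{jac} actually carried out (it is real work, not a formality), your argument goes through; the evaluation step $(Z_n(\cdot),\hat{\theta})\overset{st}{\Rightarrow}(Y(\cdot),\hat{\theta})$ followed by the continuous evaluation map is indeed a standard property of stable convergence, and the localization over an increasing exhaustion of $E$ is compatible with everything, as in the paper.
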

To analyze the difference between the AMLE and MLE, we need to obtain the uniform bound on the difference between the derivative of the log-likelihood and the discretized log-likelihood. The following lemma serves this purpose.
\begin{lemma}\label{konstr}
Assume that (A1-5) hold. Let $\Theta$ be a relatively compact set. Then for $r=0,1,2$
\begin{align*}
\sup_{\theta \in \Theta}\norm{D^r\ell_n(\theta)-D^r\ell(\theta)}_2=O_{\mathbb{P}}(\sqrt{\Delta_n}),\quad n \in \mathbb{N}.
\end{align*}
\end{lemma}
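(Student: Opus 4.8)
The plan is to treat the three cases $r=0,1,2$ in a unified way, since $\ell_n$ and $\ell$ and all their $\theta$-derivatives up to order two have the same structural form: an Euler-sum minus its continuous (integral) counterpart. Concretely, each component of $D^r\ell_n(\theta)-D^r\ell(\theta)$ is a finite linear combination (with coefficients not depending on $n$) of expressions of the shape
\begin{align*}
\sum_{i=1}^n \int_{t_{i-1}}^{t_i}\bigl(\left<f(X_{i-1},\theta)\,|\,dX_s\right>-\left<f(X_s,\theta)\,|\,dX_s\right>\bigr)
\end{align*}
and
\begin{align*}
\sum_{i=1}^n \int_{t_{i-1}}^{t_i}\bigl(h(X_{i-1},\theta)-h(X_s,\theta)\bigr)\,ds,
\end{align*}
where $f$ is built from $S^{-1}$ and $\theta$-derivatives of $\mu$ of order $\le r$, and $h$ from $S^{-1}$ and products of $\theta$-derivatives of $\mu$; by (A1), (A2--3), (A5) these $f,h$ together with their $x$-gradients are continuous on $E\times Cl(\Theta)$. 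So the first step is to record this reduction and note that it suffices to prove the $O_{\mathbb P}(\sqrt{\Delta_n})$ bound, uniformly in $\theta\in\Theta$, for each such building block.

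The second step is to handle a single building block exactly as in the derivation preceding Lemma \ref{lemma1}: split $dX_s=\mu(X_s,\theta_0)\,ds+\nu(X_s)\,dW_s$, and on each subinterval apply It\^o's formula (Lemma \ref{mito}, applied to $x\mapsto f(x,\theta)$ componentwise, with $\theta$ a frozen parameter) to write $f(X_s,\theta)-f(X_{i-1},\theta)=\int_{t_{i-1}}^s(\nabla_x f\cdot\mu+\tfrac12\nabla_2 f)(X_u,\theta)\,du+\int_{t_{i-1}}^s \nabla_x f(X_u,\theta)\nu(X_u)\,dW_u$, and similarly for $h$. Substituting, every term becomes one of a handful of prototypes: (a) $ds\,du$ double time-integrals, which are pointwise $O(\Delta_n)$ per subinterval and sum to $O(\Delta_n)\cdot T$; (b) mixed terms $\int_{t_{i-1}}^{t_i}\langle(\int_{t_{i-1}}^s\cdots du)\,|\,\nu(X_s)dW_s\rangle$ and $\int_{t_{i-1}}^{t_i}(\int_{t_{i-1}}^s\nabla_x f\,\nu\,dW_u)\,\mu(X_s,\theta_0)\,ds$, which are martingale-type or mean-$O(\Delta_n)$ terms; and (c) the genuinely stochastic prototype $\sum_i\int_{t_{i-1}}^{t_i}\langle\int_{t_{i-1}}^s\nabla_x f(X_u,\theta)\nu(X_u)\,dW_u\,|\,\nu(X_s)\,dW_s\rangle$ — precisely the term isolated in Lemma \ref{lemma1}, which by Theorem \ref{main1} is $O_{\mathbb P}(\sqrt{\Delta_n})$ pointwise in $\theta$. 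The bound $\sqrt{\Delta_n}$ (not $\Delta_n$) comes entirely from prototype (c).

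The third and main step is the passage from pointwise-in-$\theta$ to uniform-in-$\theta$ control. On the compact set $E$ all the functions $f,h,\nabla_x f,\nabla_2 f$ and, crucially, their $\theta$-derivatives up to one further order (available by (A5), which gives $\theta$-derivatives of $\mu$ and of $\partial_{x_p}\mu$ up to order $d+3\ge r+1$) are bounded, uniformly in $(x,\theta)\in E\times Cl(\Theta)$. I would then bound $\sup_{\theta\in\Theta}\norm{\cdot}_2$ by combining: for the time-integral prototypes (a), a deterministic sup-norm bound giving an honest $O(\Delta_n)$ uniformly; for the martingale prototypes (b) and (c), first estimate $\mathbb E[\sup_\theta(\cdot)^2]$ — e.g. via a Sobolev/Garsia-type inequality in $\theta$, writing $\sup_\theta|G_n(\theta)|^2\lesssim \int_\Theta(|G_n(\theta)|^2+\norm{D_\theta G_n(\theta)}_2^2)\,d\theta$ over the relatively compact (hence finite-measure) $\Theta$, and then for each fixed $\theta$ apply It\^o's isometry (Lemma \ref{iso}) and the $L^2$ estimates of the nested stochastic integrals, each of which contributes a factor $\Delta_n$. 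Since $D_\theta G_n(\theta)$ has the same form as $G_n(\theta)$ but with $f,h$ replaced by their (still bounded) $\theta$-derivatives, one gets $\mathbb E[\sup_\theta|G_n(\theta)|^2]=O(\Delta_n)$ for prototype (c) and $O(\Delta_n^2)$ for prototypes (a),(b); Markov's inequality then yields the claimed $O_{\mathbb P}(\sqrt{\Delta_n})$. Finally the result transfers verbatim to the derivatives: $D^1\ell_n-D^1\ell$ and $D^2\ell_n-D^2\ell$ are linear combinations of the same prototypes with $f,h$ replaced by their own $\theta$-derivatives, which (A5) keeps within the class of functions bounded with bounded $x$-gradients on $E\times Cl(\Theta)$, so the same estimate applies with the same rate.

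The main obstacle I expect is precisely this uniformity step: making the sup over $\theta$ rigorous for the martingale terms. The clean route is the Sobolev-embedding / Garsia--Rodemich--Rumsey argument sketched above, which reduces $\sup_\theta$ to an integral of second moments and thereby only needs the pointwise $L^2$-bounds together with the existence (by (A5)) of one extra $\theta$-derivative with the same boundedness properties; the bookkeeping of which partial derivatives of $\mu$ and $\partial_x\mu$ are needed — and checking that $d+3$ in (A5) indeed covers order $r+1$ for $r\le 2$ — is the part requiring care but is routine.
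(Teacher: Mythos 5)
Your overall architecture --- reduce all three cases $r=0,1,2$ to building blocks of the two displayed shapes, expand each block on every subinterval, and pass from a uniform-in-$\theta$ $L^2$ bound of order $\sqrt{\Delta_n}$ to the $O_{\mathbb P}(\sqrt{\Delta_n})$ claim via Markov --- is the same as the paper's, which channels everything through Theorem \ref{unif_b}. The genuine gap is in your uniformity step. The inequality $\sup_{\theta\in\Theta}|G_n(\theta)|^2\lesssim\int_\Theta\bigl(|G_n(\theta)|^2+\norm{D_\theta G_n(\theta)}_2^2\bigr)\,d\theta$ is the Sobolev embedding $W^{1,2}(\Theta)\hookrightarrow L^\infty(\Theta)$, which holds only for $d=1$; for $d\ge 2$ one extra $\theta$-derivative in $L^2$ does not control the supremum, so your claim that the argument ``only needs \dots\ one extra $\theta$-derivative with the same boundedness properties'' fails in the generality of the lemma. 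To make a Sobolev/Garsia route work you need $\theta$-derivatives of order $>d/2$ in $L^2$ (or first derivatives in $L^p$ with $p>d$); assumption (A5) does supply derivatives up to order $d+3$, but once you invoke that many the argument essentially becomes the paper's: Theorem \ref{unif_b} gets the uniform bound by expanding $f(x,\cdot)$ in a Fourier series on a rectangle containing $Cl(\Theta)$ and using $d+1$ $\theta$-derivatives to make $\sum_{\mathbf k}\norm{C_{\mathbf k}(x)}_2$ summable (Lemmas \ref{fk} and \ref{fk3}), which is precisely the $\ell^1$ form of the embedding you would need. A related point you gloss over: before writing $\sup_{\theta\in\Theta}$ over an uncountable set of It\^o integrals you must exhibit a version of the random field that is jointly continuous (or separable) in $\theta$; the paper does this through Theorem \ref{nepr} and the $A_0,B_0$ argument inside the proof of Theorem \ref{unif_b}.

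Second, your proof is written entirely under the assumption that $E$ is compact (``On the compact set $E$ all the functions \dots\ are bounded''), whereas the lemma is asserted for the open state space $E$ of the model, where the coefficients need not be bounded. The paper's proof devotes its second half to exactly this: it truncates to $\mu^{(N)},\nu^{(N)}$ adapted to an exhaustion $E_N\uparrow E$, applies the compact-case $L^2$ bound to the truncated diffusion $X^{(N)}$, identifies the laws up to the exit time $T_N$ via \cite[Corollary 10.1.2]{strvar}, and then lets $n\to\infty$, $A\to\infty$, $N\to\infty$ in the probability estimate. Without this localization your argument does not prove the lemma as stated. A minor remark: your claim that the $\sqrt{\Delta_n}$ rate ``comes entirely from prototype (c)'' is inessential --- the paper's cruder treatment of the Riemann blocks via $\mathbb E\left[\norm{X_t-X_{i-1}}_2^2\right]=O(\Delta_n)$ (Lemma \ref{pl}) already yields only $O(\sqrt{\Delta_n})$ in $L^2$ for those blocks, which is all the lemma requires.
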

Using Lemma \ref{konstr} and assumption (A6), it can be proved (exactly as in \cite{huzak2001}) that there exists a sequence of AMLEs. Namely, the following theorem holds.
\begin{theorem}\label{ex_amle}
Assume that (A1-6) hold. Then there exists a sequence
$(\overline{\theta}_n)_{n \in \mathbb{N}} \subseteq \Theta$ of $\mathcal{F}_T$-measurable random vectors such that
\begin{itemize}
\item[(i)]
$\lim_{n \to +\infty} \mathbb{P}\left(D\ell_n(\overline{\theta}_n)=0\right)=1$,
\item[(ii)]
$\overline{\theta}_n \overset{\mathbb{P}}{\to} \hat{\theta}$, $n \to +\infty$,
\item[(iii)]
if $(\tilde{\theta}_n)_{n \in \mathbb{N}}$ is another sequence of random vectors which satisfies (i) and (ii), then $\mathbb{P}(\tilde{\theta}_n=\overline{\theta}_n)=1$,
\item[(iv)]
$\left(\dfrac{1}{\sqrt{\Delta_n}}\norm{\overline{\theta}_n-\hat{\theta}}_2\right)_n$ is bounded in probability.
\end{itemize}
\end{theorem}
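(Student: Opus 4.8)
The plan is to realize $\overline{\theta}_n$ as the fixed point of a Newton-type map built from $D\ell_n$ and localized around the (random) point $\hat{\theta}$, and then to read off (i)--(iv) from the resulting contraction estimates together with the uniform bound of Lemma \ref{konstr}. Fix $\omega$ and write $\ell=\ell_T(\cdot,\omega)$, $\hat{\theta}=\hat{\theta}_T(\omega)$ and $A=A(\omega)\coloneqq-D^2\ell(\hat{\theta})$; by (A6) $A$ is positive definite, hence $\norm{A^{-1}}_F<\infty$, and $\hat{\theta}$ is $\mathcal{F}_T$-measurable. By Theorem \ref{tm_nepr} $\ell\in C^2(\Theta)$, and (A3), (A5) make $\ell_n$ of class $C^2(\Theta)$ as well. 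By continuity of $D^2\ell$ and openness of $\Theta$ there is $\rho=\rho(\omega)>0$, which one can take $\mathcal{F}_T$-measurable and a.s.\ positive, with $\overline{B}(\hat{\theta},\rho)\subseteq\Theta$ and $\sup_{\theta\in\overline{B}(\hat{\theta},\rho)}\norm{D^2\ell(\theta)-D^2\ell(\hat{\theta})}_F\le(4\norm{A^{-1}}_F)^{-1}$. Set $\Phi_n(\theta)\coloneqq\theta+A^{-1}D\ell_n(\theta)$; as $A$ is regular, the fixed points of $\Phi_n$ in $\Theta$ are exactly the zeros of $D\ell_n$.

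By Lemma \ref{mvt} applied to $D\ell$ and $D\ell_n$, using $D\ell(\hat{\theta})=0$ and $A=-D^2\ell(\hat{\theta})$, one obtains for $\theta,\theta'\in\overline{B}(\hat{\theta},\rho)$
\[
\Phi_n(\theta)-\hat{\theta}=A^{-1}\!\int_0^1\!\bigl(D^2\ell(\hat{\theta}+t(\theta-\hat{\theta}))-D^2\ell(\hat{\theta})\bigr)dt\,(\theta-\hat{\theta})+A^{-1}\bigl(D\ell_n-D\ell\bigr)(\theta),
\]
and $\Phi_n(\theta)-\Phi_n(\theta')=A^{-1}\int_0^1\bigl(D^2\ell_n(\theta'+t(\theta-\theta'))-D^2\ell(\hat{\theta})\bigr)dt\,(\theta-\theta')$. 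Combining this with the choice of $\rho$ and the consistency of $\norm{\cdot}_F$ with $\norm{\cdot}_2$ shows that on the event
\[
G_n\coloneqq\bigl\{\norm{A^{-1}}_F\sup_{\theta\in\Theta}\norm{D\ell_n(\theta)-D\ell(\theta)}_2\le\tfrac{\rho}{4}\bigr\}\cap\bigl\{\norm{A^{-1}}_F\sup_{\theta\in\Theta}\norm{D^2\ell_n(\theta)-D^2\ell(\theta)}_2\le\tfrac14\bigr\}
\]
the map $\Phi_n$ carries $\overline{B}(\hat{\theta},\rho)$ into itself and is a $\tfrac12$-contraction there, so by the Banach fixed point theorem it has a unique fixed point $\overline{\theta}_n\in\overline{B}(\hat{\theta},\rho)$; we take this as the estimator and set $\overline{\theta}_n\coloneqq\hat{\theta}$ off $G_n$. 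On $G_n$, $\overline{\theta}_n$ is the a.s.\ limit of the $\mathcal{F}_T$-measurable Newton iterates $\theta^{(m+1)}=\Phi_n(\theta^{(m)})$, $\theta^{(0)}=\hat{\theta}$, hence $\mathcal{F}_T$-measurable, and $D^2\ell_n(\overline{\theta}_n)$ is a small perturbation of the negative definite $D^2\ell(\hat{\theta})$, so $\overline{\theta}_n$ is a local maximum of $\ell_n$. Finally $\mathbb{P}(G_n)\to1$: given $\delta>0$, pick $M$ with $\mathbb{P}(\norm{A^{-1}}_F>M)+\mathbb{P}(\rho<1/M)<\delta$; on the complementary event $G_n$ contains $\{\sup_{\theta\in\Theta}\norm{D\ell_n-D\ell}_2\le(4M^2)^{-1}\}\cap\{\sup_{\theta\in\Theta}\norm{D^2\ell_n-D^2\ell}_2\le(4M)^{-1}\}$, whose probability tends to $1$ by Lemma \ref{konstr}; thus $\limsup_n\mathbb{P}(G_n^c)\le\delta$ and $\delta$ is arbitrary.

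Property (i) is then immediate, since $D\ell_n(\overline{\theta}_n)=0$ on $G_n$. The standard fixed-point estimate for a $\tfrac12$-contraction gives, on $G_n$,
\[
\norm{\overline{\theta}_n-\hat{\theta}}_2\le2\norm{\Phi_n(\hat{\theta})-\hat{\theta}}_2=2\norm{A^{-1}(D\ell_n-D\ell)(\hat{\theta})}_2\le2\norm{A^{-1}}_F\sup_{\theta\in\Theta}\norm{D\ell_n(\theta)-D\ell(\theta)}_2,
\]
while $\overline{\theta}_n=\hat{\theta}$ off $G_n$; since $\sup_{\theta\in\Theta}\norm{D\ell_n-D\ell}_2\overset{\mathbb{P}}{\to}0$ and $\norm{A^{-1}}_F<\infty$ a.s., (ii) follows, and dividing the same bound by $\sqrt{\Delta_n}$ and using $\sup_{\theta\in\Theta}\norm{D\ell_n-D\ell}_2=O_{\mathbb{P}}(\sqrt{\Delta_n})$ (Lemma \ref{konstr}) gives (iv), because an a.s.\ finite random variable times an $O_{\mathbb{P}}(1)$ sequence is $O_{\mathbb{P}}(1)$. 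For (iii), if $(\tilde{\theta}_n)$ also satisfies (i) and (ii), then on $G_n\cap\{D\ell_n(\tilde{\theta}_n)=0\}\cap\{\norm{\tilde{\theta}_n-\hat{\theta}}_2<\rho\}$ the vector $\tilde{\theta}_n$ is a fixed point of $\Phi_n$ in $\overline{B}(\hat{\theta},\rho)$ and hence equals $\overline{\theta}_n$; this event has probability tending to $1$ (by (i) and (ii) for $\tilde{\theta}_n$, $\rho>0$ a.s., and $\mathbb{P}(G_n)\to1$), so $\mathbb{P}(\tilde{\theta}_n=\overline{\theta}_n)\to1$, the precise statement being obtained exactly as in \cite{huzak2001}.

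The step I expect to be the main obstacle is the bookkeeping of the randomness: the localization radius $\rho$, the Hessian $A=-D^2\ell(\hat{\theta})$ and its inverse all depend on $\omega$, whereas Lemma \ref{konstr} controls $\sup_{\theta\in\Theta}\norm{D^r\ell_n-D^r\ell}_2$ only in probability, so Lemma \ref{konstr} must be interleaved with a truncation of these random quantities both to obtain $\mathbb{P}(G_n)\to1$ and to carry the $O_{\mathbb{P}}(\sqrt{\Delta_n})$ rate through the random factor $\norm{A^{-1}}_F$ in (iv). A secondary point is to make sure $\rho$ can be chosen measurably and that $\overline{\theta}_n$ is $\mathcal{F}_T$-measurable, which is precisely why $\overline{\theta}_n$ is constructed as the limit of the explicit Newton iteration.
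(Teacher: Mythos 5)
Your argument is correct, and it is genuinely more self-contained than what the paper does: the paper proves nothing here beyond observing that Lemma \ref{konstr} (the uniform $O_{\mathbb{P}}(\sqrt{\Delta_n})$ bounds on $D^r\ell_n-D^r\ell$ for $r=0,1,2$) together with (A6) places the problem squarely within the hypotheses of the general AMLE existence theorem of \cite{huzak2001}, which it then invokes verbatim. You instead reconstruct that general theorem via a localized Banach fixed point argument for the Newton map $\Phi_n(\theta)=\theta+A^{-1}D\ell_n(\theta)$, and all the estimates check out: the identity $\Phi_n(\theta)-\hat{\theta}=A^{-1}\int_0^1(D^2\ell(\hat{\theta}+t(\theta-\hat{\theta}))-D^2\ell(\hat{\theta}))\,dt\,(\theta-\hat{\theta})+A^{-1}(D\ell_n-D\ell)(\theta)$ is right, the self-mapping and $\tfrac12$-contraction properties on $G_n$ follow from your choice of $\rho$, the truncation argument for $\mathbb{P}(G_n)\to 1$ is sound, and the fixed-point estimate $\norm{\overline{\theta}_n-\hat{\theta}}_2\le 2\norm{A^{-1}}_F\sup_{\theta}\norm{D\ell_n(\theta)-D\ell(\theta)}_2$ correctly delivers (ii) and (iv) once combined with Lemma \ref{konstr} and the tightness of a product of a fixed a.s.-finite random variable with an $O_{\mathbb{P}}(1)$ sequence. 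What your approach buys is a proof the reader can verify without opening \cite{huzak2001}; what it costs is that two points you flag yourself must still be carried out in detail, namely a measurable selection of the localization radius $\rho(\omega)$ (standard, e.g.\ via a supremum over rational radii), and the reading of (iii): as you note, your argument yields $\lim_n\mathbb{P}(\tilde{\theta}_n=\overline{\theta}_n)=1$, which is the form of the uniqueness statement in \cite{huzak2001}; the literal equality $\mathbb{P}(\tilde{\theta}_n=\overline{\theta}_n)=1$ for each fixed $n$ cannot hold in general, so the theorem's wording should be understood (or corrected) in the limit sense.
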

The core of our paper is the following theorem, which gives the limit of the standardized difference between the AMLE and MLE in terms of stable convergence. From now on we assume that the vector $\overline{\theta}_n$ is such that (i)-(iv) from Theorem \ref{ex_amle} hold.
\begin{theorem}\label{the_core}
Let $\Theta$ be a convex and relatively compact set. Assume (A1-6). Then
\begin{align*}
\frac{1}{\sqrt{\Delta_n}}\left(\overline{\theta}_n-\hat{\theta}\right) \overset{st}{\Rightarrow} MN\left(0, \left(D^2\ell(\hat{\theta})\right)^{-1}\Sigma(\hat{\theta})\left(D^2\ell(\hat{\theta})\right)^{-1}\right).
\end{align*}
\end{theorem}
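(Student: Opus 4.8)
The plan is to invert the mean–value identity displayed just before Lemma \ref{lemma1}, feed Theorem \ref{main2} into its right–hand side, show that the averaged Hessian appearing on the left converges in probability to $D^2\ell(\hat\theta)$, and conclude by a Slutsky–type argument for stable convergence. First I would restrict attention to the event $A_n:=\{D\ell_n(\overline\theta_n)=0\}$, which by Theorem \ref{ex_amle}(i) has $\mathbb{P}(A_n)\to 1$, so that modifying the vectors off $A_n$ does not affect any limit in law. On $A_n$, with $H_n:=\int_0^1 D^2\ell_n(\hat\theta+(\overline\theta_n-\hat\theta)z)\,dz$ (well defined since $\Theta$ is convex), the identity
\begin{align*}
\frac{1}{\sqrt{\Delta_n}}\,H_n\cdot(\overline\theta_n-\hat\theta)=\frac{1}{\sqrt{\Delta_n}}\bigl(D\ell(\hat\theta)-D\ell_n(\hat\theta)\bigr)
\end{align*}
holds, because $D\ell(\hat\theta)=0$ by (A6).

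Next I would establish $H_n\overset{\mathbb{P}}{\to}D^2\ell(\hat\theta)$. By Lemma \ref{konstr} with $r=2$ we have $\sup_{\theta\in\Theta}\norm{D^2\ell_n(\theta)-D^2\ell(\theta)}_2=O_{\mathbb{P}}(\sqrt{\Delta_n})\overset{\mathbb{P}}{\to}0$, hence $H_n=\int_0^1 D^2\ell(\hat\theta+(\overline\theta_n-\hat\theta)z)\,dz+o_{\mathbb{P}}(1)$. Since $\overline\theta_n\overset{\mathbb{P}}{\to}\hat\theta$ by Theorem \ref{ex_amle}(ii), the whole segment $\{\hat\theta+(\overline\theta_n-\hat\theta)z:z\in[0,1]\}$ is eventually trapped in any prescribed neighbourhood of $\hat\theta$, and $D^2\ell$ is continuous on $\Theta$ by Theorem \ref{tm_nepr}; therefore $\sup_{z\in[0,1]}\norm{D^2\ell(\hat\theta+(\overline\theta_n-\hat\theta)z)-D^2\ell(\hat\theta)}_2\overset{\mathbb{P}}{\to}0$, and in particular the $z$–integral converges in probability to $D^2\ell(\hat\theta)$. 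By (A6) the matrix $D^2\ell(\hat\theta)$ is negative definite, hence invertible, so $\mathbb{P}(H_n\text{ invertible})\to1$; defining $\widetilde H_n$ to equal $H_n$ when $H_n$ is invertible and the identity otherwise, we get $\widetilde H_n^{-1}\overset{\mathbb{P}}{\to}\bigl(D^2\ell(\hat\theta)\bigr)^{-1}$ unconditionally, and $\frac{1}{\sqrt{\Delta_n}}(\overline\theta_n-\hat\theta)$ coincides with $\widetilde H_n^{-1}\cdot\frac{1}{\sqrt{\Delta_n}}\bigl(D\ell(\hat\theta)-D\ell_n(\hat\theta)\bigr)$ on a set of probability tending to $1$.

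Then I would invoke Theorem \ref{main2}, which gives $\frac{1}{\sqrt{\Delta_n}}\bigl(D\ell(\hat\theta)-D\ell_n(\hat\theta)\bigr)\overset{st}{\Rightarrow}Y(\hat\theta)\sim MN(0,\Sigma(\hat\theta))$. The matrix $\widetilde H_n^{-1}$ is defined on the original space and converges in probability to $\bigl(D^2\ell(\hat\theta)\bigr)^{-1}$, which is $\mathcal{F}_T$–measurable; by the standard Slutsky–type property of stable convergence (a sequence converging in probability to an $\mathcal{F}_T$–measurable limit converges jointly stably with any stably convergent sequence, after which one applies the continuous map $(A,x)\mapsto Ax$), this yields $\frac{1}{\sqrt{\Delta_n}}(\overline\theta_n-\hat\theta)\overset{st}{\Rightarrow}\bigl(D^2\ell(\hat\theta)\bigr)^{-1}Y(\hat\theta)$. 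Finally, writing $Y(\hat\theta)$ as $\sqrt{\Sigma(\hat\theta)}\,Z$ with $Z$ standard normal independent of $\mathcal{F}_T$, and using that $\bigl(D^2\ell(\hat\theta)\bigr)^{-1}$ is $\mathcal{F}_T$–measurable and symmetric, one identifies the limit as $MN\bigl(0,\bigl(D^2\ell(\hat\theta)\bigr)^{-1}\Sigma(\hat\theta)\bigl(D^2\ell(\hat\theta)\bigr)^{-1}\bigr)$, which is the assertion.

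The main obstacle I anticipate is the rigorous treatment of $H_n\overset{\mathbb{P}}{\to}D^2\ell(\hat\theta)$: one must combine the random, shrinking integration segment with the uniform $O_{\mathbb{P}}(\sqrt{\Delta_n})$ estimate of Lemma \ref{konstr} and the continuity of $D^2\ell$, and then carry the attendant bookkeeping — passing from the $\mathbb{P}$–almost–full events $A_n\cap\{H_n\text{ invertible}\}$ to an honest statement of stable convergence — cleanly through the product. The Slutsky–type step for stable convergence is routine but should be cited precisely, since stable (not merely weak) convergence is what is needed for the $\mathcal{F}_T$–measurability of the limiting covariance to make sense.
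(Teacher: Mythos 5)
Your proposal is correct and follows essentially the same route as the paper: the paper packages the mean--value identity combined with Theorem \ref{main2}, the continuity step $\sup_{z\in[0,1]}\norm{D^2\ell(\hat\theta)-D^2\ell(\hat\theta+z(\overline\theta_n-\hat\theta))}\overset{\mathbb{P}}{\to}0$, and the replacement of the averaged Hessian via Lemma \ref{konstr} ($r=2$) into the three lemmas preceding the theorem (with proofs deferred to the one-dimensional reference), and then multiplies by $\bigl(D^2\ell(\hat\theta)\bigr)^{-1}$ exactly as you do. The only cosmetic difference is that you invert the random averaged Hessian $H_n$ directly, whereas the paper first establishes $D^2\ell(\hat\theta)\frac{1}{\sqrt{\Delta_n}}(\overline\theta_n-\hat\theta)\overset{st}{\Rightarrow}MN(0,\Sigma(\hat\theta))$ and then applies the fixed $\mathcal{F}_T$-measurable inverse; both hinge on the same Slutsky-type property of stable convergence.
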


\section{Proofs}
For proving Theorem \ref{tm_nepr} and Lemma \ref{konstr} we need help of Fourier analysis.

Let $U\subset O^{\Theta}$ be an open and bounded set such that the following holds:
\[ U\subset Cl(U)\subset U'\subset Cl(U')\subset O^{\Theta}\]
for some $d$-dimensional rectangular $U'=\prod_{i=1}^d\langle a_i,b_i\rangle$.
Since $O^{\Theta}\subset\mathbb{R}^d$ is an open and locally compact set, for every $\theta\in O^{\Theta}$ there exists such an open neighborhood $U$ of $\theta$ in $O^{\Theta}$. On the other hand, if $\mathcal{K}$ is a relatively compact open set in $\Theta$, then it can be covered by a finite number of such open sets $U_j$ ($j$ from a finite set).
If $f:E\times O^{\Theta}\rightarrow\mathbb{R}^k$ is a bounded function such that $f(x,\cdot )\in C^m (Cl (\Theta ))$ 
for some $m$ and all $x\in E$,
then there exists a bounded function $\tilde{f}:E\times\mathbb{R}^d\rightarrow\mathbb{R}^k$ and a regular linear mapping
$A:\mathbb{R}^d \rightarrow\mathbb{R}^d$ such that $\tilde{f}(x,\cdot )\in C^m (\mathbb{R}^d)$ for all $x\in E$ and
$A( Cl(U'))=[-\pi, \pi]^d$, and
\begin{align*}
&(\forall(x,\theta )\in E\times Cl(\mathcal{K}))\;\; f(x,\theta )=\tilde{f}(x,A\theta )\\
&(\forall \theta\in \partial ([-\pi, \pi]^d))\, (\forall x\in E)\;\; \tilde{f}(x,\theta )=0
\end{align*}
where $\partial\mathcal{S}$ denotes boundary of a set $\mathcal{S}$.
Such a function $\tilde{f}$ can be constructed from $f$ by use of an appropriate test-function (see \cite{huzak2018}). For all of above reasons it is enough to prove the following two technical theorems for neighborhoods and relative compacts of the form $\mathcal{K}_0:=\left<-\pi, \pi\right>^d$ and functions that satisfy assumptions
(P1) and (P2) below.

Let $\mathbf{k}=[k_1, \dots, k_d]^T \in \mathbb{Z}^d$, $\mathbf{j}=[j_1,\dots, j_d]^T \in \mathbb{N}_0^d$ and $m=|\mathbf{j}|:=j_1+\cdots+j_d \leq d+1$. For function $f:E\times\mathbb{R}^d
\rightarrow\mathbb{R}^k$ we assume the following: 
\begin{itemize}
\item[{\bf P1}]
For all $\mathbf{j}\in \mathbb{N}_0^d$ such that $|\mathbf{j}|=m\leq d+1$, and all $(x,\theta )\in E\times\mathbb{R}^d
$ there exist $\frac{\partial f}{\partial x_p}(x,\theta), D_{\mathbf{j}}^m f(x,\theta)$ and $\frac{\partial}{\partial x_p}D_{\mathbf{j}}^m f(x,\theta)$ for $p=1,\dots, k$. Additionally, $f$, $\frac{\partial f}{\partial x_p}(x,\theta)$, $D_{\mathbf{j}}^m f(x,\theta)$ and $\frac{\partial}{\partial x_p}D_{\mathbf{j}}^m f(x,\theta)$, $m\leq d+1$, $p=1,\dots, k$, are continuous and uniformly bounded functions on 
$E \times\mathbb{R}^d 
$.
\item[\bf{P2}]
For all $x \in E$ and all $\mathbf{j}\in \mathbb{N}_0^d$ such that $|\mathbf{j}|=m\leq d+1$, $f(x,\cdot)\equiv \mathbf{0}_{k \times k}$
and $D_{\mathbf{j}}^m f(x,\cdot)\equiv \mathbf{0}_{k}$
on $\partial \mathcal{K}_0$.
\end{itemize}
Fourier coefficients of function $f$ are defined (componentwise) as vectors
\begin{align*}
C_{\mathbf{k}}(x)&\coloneqq \frac{1}{(2\pi)^d}\int_{Cl(\mathcal{K}_0)} f(x,\theta)e^{-i\left<\mathbf{k}|\theta\right>}\,d\theta,\\
C_{\mathbf{k}}^{\mathbf{(j)}}(x)&\coloneqq \frac{1}{(2\pi)^d}\int_{Cl(\mathcal{K}_0)} D_{\mathbf{j}}^mf(x,\theta)e^{-i\left<\mathbf{k}|\theta\right>}\,d\theta.
\end{align*}
Let us denote $\mathbf{k}^{\mathbf{j}}\coloneqq k_1^{j_1}\cdots k_d^{j_d}$. Under assumptions (P1) and  (P2) using partial integration it is easy to see that for all $x \in E$, we have \cite{taylor}
\begin{align}\label{sv_fk}
C_{\mathbf{k}}^{(\mathbf{j})}(x)=i^m \mathbf{k}^{\mathbf{j}}C_{\mathbf{k}}(x).
\end{align}
\begin{remark}\label{four}
If assumption (P2) does not hold for function $f$, we can easily calculate the relation similar to \eqref{sv_fk}, and it is of the form
\begin{align*}
F_{\mathbf{k}}(x)+C_{\mathbf{k}}^{(\mathbf{j})}(x)=i^m \mathbf{k}^{\mathbf{j}}C_{\mathbf{k}}(x),
\end{align*}
where $F_{\mathbf{k}}(x)$ is a linear combination of functions $D_{\theta}^{m-1}f(x,\theta)$, for $1\leq m\leq d+1$, on $\partial \mathcal{K}_0$. Since these functions are bounded on $E\times\mathbb{R}^d
$ by (P1), expression $F_{\mathbf{k}}(x)$ can be bounded by a constant so it may be ignored in further considerations (see the proof of Lemma \ref{fk}).
\end{remark}
The following is a list of technical statements essential for proving the uniform (in $\theta$) $L^2$-bounds of sum of certain Riemann and It\^ o integrals as well as absolute convergence of Fourier series
\begin{align}\label{fs0}
\sum_{\mathbf{k}\in\mathbb{Z}^d} C_{\mathbf{k}}(x)e^{i\left<\mathbf{k}|\theta\right>}
\end{align}
for $(x,\theta )\in E\times Cl(\mathcal{K}_0)$.
\begin{lemma}\label{pl}
Let functions $\mu (\cdot,\theta_0)$ and $\nu$ are bounded and continuous on $E$. There exists a constant $K>0$ such that for $s_1$, $s_2$, $0\leq s_1 \leq s_2 \leq T$
$$\mathbb{E}\left[\norm{X_{s_2}-X_{s_1}}_2^2\right]\leq 2K\left(\left(s_2-s_1\right)^2+\left(s_2-s_1\right)\right).$$
\end{lemma}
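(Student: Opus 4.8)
The plan is to read off the increment of $X$ directly from the integral form of \eqref{sde},
\begin{align*}
X_{s_2}-X_{s_1}=\int_{s_1}^{s_2}\mu(X_u,\theta_0)\,du+\int_{s_1}^{s_2}\nu(X_u)\,dW_u,
\end{align*}
then split the squared norm into a drift part and a martingale part via the elementary inequality $\norm{a+b}_2^2\leq 2\norm{a}_2^2+2\norm{b}_2^2$, and bound each part using the hypothesised uniform boundedness of $\mu(\cdot,\theta_0)$ and $\nu$ on $E$. Concretely, fix constants $M,N>0$ with $\norm{\mu(x,\theta_0)}_2\leq M$ and $\norm{\nu(x)}_F\leq N$ for all $x\in E$ (these exist because continuous bounded functions on $E$ are bounded in the relevant norms).

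For the drift term I would apply the Cauchy--Schwarz inequality (equivalently, Jensen's inequality for the normalized Lebesgue measure on $[s_1,s_2]$) to get
\begin{align*}
\norm{\int_{s_1}^{s_2}\mu(X_u,\theta_0)\,du}_2^2\leq (s_2-s_1)\int_{s_1}^{s_2}\norm{\mu(X_u,\theta_0)}_2^2\,du\leq M^2(s_2-s_1)^2 .
\end{align*}
For the stochastic term I would invoke the matrix It\^o isometry, Lemma \ref{iso}\eqref{miito}, applied on the subinterval $[s_1,s_2]$ (legitimate because $u\mapsto\nu(X_u)$ is adapted and, being bounded, square-integrable on $[0,T]$), to obtain
\begin{align*}
\mathbb{E}\left[\norm{\int_{s_1}^{s_2}\nu(X_u)\,dW_u}_2^2\right]=\mathbb{E}\left[\int_{s_1}^{s_2}\norm{\nu(X_u)}_F^2\,du\right]\leq N^2(s_2-s_1).
\end{align*}
Adding the two estimates and setting $K:=\max\{M^2,N^2\}$ gives
\begin{align*}
\mathbb{E}\left[\norm{X_{s_2}-X_{s_1}}_2^2\right]\leq 2M^2(s_2-s_1)^2+2N^2(s_2-s_1)\leq 2K\left((s_2-s_1)^2+(s_2-s_1)\right),
\end{align*}
which is exactly the claimed bound.

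There is no genuine obstacle in this argument; it is a routine second-moment estimate for a diffusion with bounded coefficients, and it uses neither Gronwall's lemma nor the ellipticity assumption (A1). The only points worth a sentence of justification are that the stochastic integral over $[s_1,s_2]$ is well defined and that Lemma \ref{iso} may be applied on a subinterval rather than on all of $[0,T]$; both follow immediately from the adaptedness of the strong solution $X$ (whose existence is guaranteed by (A1-2) and (A4)) together with the uniform bound on $\nu$. If one prefers to avoid the matrix formulation entirely, the same computation can be carried out coordinate by coordinate using the scalar isometry Lemma \ref{iso}\eqref{viito}, yielding identical constants.
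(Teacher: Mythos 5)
Your argument is correct and is essentially identical to the paper's proof: the same decomposition of $X_{s_2}-X_{s_1}$ into drift and martingale parts, the same inequality $\norm{a+b}_2^2\leq 2\norm{a}_2^2+2\norm{b}_2^2$, Cauchy--Schwarz for the Riemann integral, and Lemma \ref{iso}\eqref{miito} for the stochastic integral, with the boundedness of $\mu(\cdot,\theta_0)$ and $\nu$ supplying the constant $K$. No issues.
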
	
\begin{lemma}\label{fk}
Let $E\subseteq \mathbb{R}^k$ be an open and convex set and $x, y \in E$,
and let $f:E\times\mathbb{R}^d
\rightarrow\mathbb{R}^k$ be a function that satisfies (P1) and (P2).
Then for every $\mathbf{k} \in \mathbb{Z}^d$ there exist constants $k_1, k_2>0$ such that
\begin{align}\label{fk1}
\norm{C_{\mathbf{k}}(x)-C_{\mathbf{k}}(y)}_2\leq k_1 K_m (\mathbf{k})\norm{x-y}_2,
\end{align}
\begin{align}\label{fk2}
\norm{C_{\mathbf{k}}(x)}_2\leq k_2 K_m (\mathbf{k}),
\end{align}
where $K_m(\mathbf{k})=\left(\frac{d+1}{1+|k_1|+\cdots+|k_d|}\right)^m$.
\end{lemma}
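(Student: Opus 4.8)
The plan is to deduce both estimates from the Fourier-coefficient identity \eqref{sv_fk}. Under (P1)--(P2) the integration by parts that produces \eqref{sv_fk} has no boundary contribution, so for every multi-index $\mathbf{j}\in\mathbb{N}_0^d$ with $|\mathbf{j}|=m\le d+1$ one has $C_{\mathbf{k}}^{(\mathbf{j})}(x)=i^m\mathbf{k}^{\mathbf{j}}C_{\mathbf{k}}(x)$ for all $x\in E$ and all $\mathbf{k}\in\mathbb{Z}^d$. Taking $\norm{\cdot}_2$ and estimating the defining integral of $C_{\mathbf{k}}^{(\mathbf{j})}(x)$ crudely,
\[
|\mathbf{k}^{\mathbf{j}}|\,\norm{C_{\mathbf{k}}(x)}_2=\norm{C_{\mathbf{k}}^{(\mathbf{j})}(x)}_2\le\frac{1}{(2\pi)^d}\int_{Cl(\mathcal{K}_0)}\norm{D_{\mathbf{j}}^m f(x,\theta)}_2\,d\theta\le M,
\]
where $M$ is a uniform bound on $E\times\mathbb{R}^d$ — independent of $x$ and of $\mathbf{j}$ with $|\mathbf{j}|\le d+1$ — on $f$ and all its $\theta$-derivatives of order $\le d+1$, which exists by (P1).

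The second step turns this family of monomial bounds into \eqref{fk2} via the multinomial theorem. Since
\[
\bigl(1+|k_1|+\cdots+|k_d|\bigr)^m=\sum_{j_0+j_1+\cdots+j_d=m}\binom{m}{j_0,\,\ldots,\,j_d}\,|k_1|^{j_1}\cdots|k_d|^{j_d},
\]
multiplying by $\norm{C_{\mathbf{k}}(x)}_2$ and applying the bound of the first step to each summand (the multi-index $\mathbf{j}'=(j_1,\ldots,j_d)$ has order $m-j_0\le d+1$, so the first step indeed applies) gives
\[
\bigl(1+|k_1|+\cdots+|k_d|\bigr)^m\norm{C_{\mathbf{k}}(x)}_2\le M\sum_{j_0+\cdots+j_d=m}\binom{m}{j_0,\,\ldots,\,j_d}=M(d+1)^m,
\]
and dividing by $(1+|k_1|+\cdots+|k_d|)^m$ yields $\norm{C_{\mathbf{k}}(x)}_2\le M\,K_m(\mathbf{k})$, so $k_2=M$ works.

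For \eqref{fk1} I would rerun the same argument on the difference. Writing $C_{\mathbf{k}}(x)-C_{\mathbf{k}}(y)=\frac{1}{(2\pi)^d}\int_{Cl(\mathcal{K}_0)}\bigl(f(x,\theta)-f(y,\theta)\bigr)e^{-i\left<\mathbf{k}|\theta\right>}\,d\theta$ and using the convexity of $E$ together with the generalized mean value theorem (Lemma \ref{mvt}) and the (P1) bound on the maps $\partial f/\partial x_p$, one gets $\norm{f(x,\theta)-f(y,\theta)}_2\le L\norm{x-y}_2$ with $L$ independent of $\theta$; the same holds with $f$ replaced by any $D_{\mathbf{j}}^m f$, $|\mathbf{j}|\le d+1$, using the (P1) bound on $\partial_{x_p}D_{\mathbf{j}}^m f$. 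Applying \eqref{sv_fk} to the difference gives $|\mathbf{k}^{\mathbf{j}}|\,\norm{C_{\mathbf{k}}(x)-C_{\mathbf{k}}(y)}_2=\norm{C_{\mathbf{k}}^{(\mathbf{j})}(x)-C_{\mathbf{k}}^{(\mathbf{j})}(y)}_2\le L\norm{x-y}_2$, and the multinomial bookkeeping of step two then produces $\norm{C_{\mathbf{k}}(x)-C_{\mathbf{k}}(y)}_2\le L\,K_m(\mathbf{k})\norm{x-y}_2$, i.e.\ $k_1=L$.

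There is no serious obstacle here; the points requiring care are that $k_1,k_2$ so obtained are in fact independent of $\mathbf{k}$, $x$, $y$ — which is what is needed downstream, since taking $m=d+1$ and summing over $\mathbf{k}\in\mathbb{Z}^d$ then gives absolute and $x$-uniform convergence of the series \eqref{fs0}, because $\sum_{\mathbf{k}\in\mathbb{Z}^d}\bigl(1+|k_1|+\cdots+|k_d|\bigr)^{-(d+1)}<\infty$ — and that (P2) is exactly what makes the boundary term in \eqref{sv_fk} vanish. If (P2) were dropped one would instead invoke Remark \ref{four}, absorbing the bounded boundary term $F_{\mathbf{k}}(x)$ into the constant at the cost of restricting to $\mathbf{k}\ne\mathbf{0}$. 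Finally, one uses throughout only $\theta$-derivatives of order $\le d+1$, so that the single uniform constant from (P1) covers every term appearing in the multinomial expansion.
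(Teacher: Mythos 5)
Your proposal is correct and follows essentially the same route as the paper: the identity \eqref{sv_fk}, the multinomial expansion of $(1+|k_1|+\cdots+|k_d|)^m$ to convert the monomial bounds into the factor $K_m(\mathbf{k})$, the uniform bounds from (P1), and Lemma \ref{mvt} with the convexity of $E$ for the Lipschitz estimate \eqref{fk1}. Your explicit remark that each multi-index $(j_1,\ldots,j_d)$ arising in the expansion has order $m-j_0\le d+1$ makes precise a step the paper leaves implicit, but the argument is the same.
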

\begin{lemma}\label{fk3}
$\sum_{\mathbf{k}\in \mathbb{Z}^d} \frac{1}{(1+|k_1|+\cdots+|k_d|)^{d+1}} < +\infty.$
\end{lemma}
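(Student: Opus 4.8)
The plan is to organize the lattice sum according to the $\ell^1$-norm of the summation index. Since every term is positive, the sum may be rearranged freely, so I would first write
\[
\sum_{\mathbf{k}\in\mathbb{Z}^d} \frac{1}{(1+|k_1|+\cdots+|k_d|)^{d+1}}
= \sum_{m=0}^{\infty} \frac{N_d(m)}{(1+m)^{d+1}},
\qquad
N_d(m) := \#\{\mathbf{k}\in\mathbb{Z}^d : |k_1|+\cdots+|k_d| = m\}.
\]
The term $m=0$ contributes $N_d(0)=1$, so it remains to estimate the tail $\sum_{m\geq 1} N_d(m)(1+m)^{-(d+1)}$.

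The key step is to bound $N_d(m)$ by a polynomial in $m$ of degree $d-1$. A point $\mathbf{k}$ with $|k_1|+\cdots+|k_d|=m$ is completely determined by the tuple of absolute values $(|k_1|,\dots,|k_d|)\in\mathbb{N}_0^d$ together with a sign for each nonzero coordinate. By the stars-and-bars count, the number of tuples $(a_1,\dots,a_d)\in\mathbb{N}_0^d$ with $a_1+\cdots+a_d=m$ equals $\binom{m+d-1}{d-1}$, and for each of them there are at most $2^d$ admissible sign choices; hence $N_d(m)\leq 2^d\binom{m+d-1}{d-1}$. For $m\geq 1$ one has $\binom{m+d-1}{d-1}\leq (m+d-1)^{d-1}/(d-1)! \leq (dm)^{d-1}/(d-1)!$, so $N_d(m)\leq C_d\, m^{d-1}$ with $C_d := 2^d d^{d-1}/(d-1)!$.

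Plugging this in and using $m^{d-1}/(1+m)^{d+1}\leq m^{d-1}/m^{d+1}=m^{-2}$ for $m\geq 1$ — this is exactly where the exponent $d+1$ (rather than $d$) is used, as it leaves a convergent tail — gives
\[
\sum_{\mathbf{k}\in\mathbb{Z}^d} \frac{1}{(1+|k_1|+\cdots+|k_d|)^{d+1}}
\;\leq\; 1 + C_d\sum_{m=1}^{\infty}\frac{1}{m^2}
\;=\; 1 + C_d\,\frac{\pi^2}{6}
\;<\; +\infty,
\]
which is the assertion.

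There is no serious obstacle; the only point requiring a little care is the combinatorial estimate $N_d(m)=O(m^{d-1})$ and the observation that $d+1$ is the borderline exponent making the resulting series converge. One could alternatively compare the sum with the integral $\int_{\mathbb{R}^d}(1+\|x\|_1)^{-(d+1)}\,dx$, which is finite by passing to $\ell^1$-``polar'' coordinates (the relevant Jacobian factor grows like $r^{d-1}$, so the radial integral is $\int_0^\infty r^{d-1}(1+r)^{-(d+1)}\,dr<\infty$); but making that comparison rigorous needs a monotone-rearrangement argument on the summand, so the elementary counting above is the cleaner route.
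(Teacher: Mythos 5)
Your proof is correct. The paper itself does not spell out an argument for this lemma --- it only refers to part of the proof of Lemma 6.7 in the cited reference \cite{huzak2018} --- and the standard argument there is precisely the one you give: group the lattice points into $\ell^1$-shells $\{|k_1|+\cdots+|k_d|=m\}$, bound the shell count by $O(m^{d-1})$ via stars-and-bars, and reduce to $\sum_m m^{-2}<\infty$. Your write-up is a complete, self-contained version of that argument, with all constants tracked correctly.
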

If $f$ satisfies (P1) and (P2), then 
\begin{align*} 
&\sum_{\mathbf{k}\in\mathbb{Z}^d}\norm{C_{\mathbf{k}}(x)e^{i\left<\mathbf{k}|\theta\right>}}_2\leq 
 \sum_{\mathbf{k}\in\mathbb{Z}^d}\norm{C_{\mathbf{k}}(x)}_2\leq k_2 \sum_{\mathbf{k}\in\mathbb{Z}^d}K_{d+1}(\mathbf{k})=\\
 &= k_2 (d+1)^{d+1}\sum_{\mathbf{k}\in \mathbb{Z}^d} \frac{1}{(1+|k_1|+\cdots+|k_d|)^{d+1}} < +\infty,
 \end{align*}
 by Lemma \ref{fk} and Lemma \ref{fk3},  i.e.\ Fourier series \eqref{fs0} absolutely converges and hence it converges to $f(x,\theta )$ since $f(x,\cdot )$ is continuous by (P1) (see \cite{taylor}, pp.\ 197-206).
 
\begin{theorem}\label{nepr}
Let $\Theta\subseteq\mathbb{R}^d$ be an open set and let $f:E\times \Theta\rightarrow\mathbb{R}$ be a uniformly bounded function such that for all $x\in E\subseteq\mathbb{R}^k$, $f(x,\cdot )\in C^{d+2}(\Theta )$, and all partial derivatives of $f$ of the orders $m\leq d+2$ are uniformly bounded. Moreover, let $\left(X_t\right)_{1\leq t \leq T}$ be a random process that satisfies (\ref{sde}) and let $B$ be a standard Brownian motion defined on the same filtered probability space. Then there exists random function $F:\Omega\times\Theta\rightarrow\mathbb{R}$ such that for all $\omega\in\Omega$, $F(\omega,\cdot )\in C^1 (\Theta )$, and for all $\theta\in\Theta$ (with $F(\theta )\equiv F(\cdot,\theta )$)
\begin{align*}
F(\theta )&=\int_0^T f(X_t,\theta )\, dB_t\;\mbox{\rm a.s.}\\
\frac{\partial}{\partial\theta_j}F(\theta )&=\int_0^T \frac{\partial}{\partial\theta_j}f(X_t,\theta )\, dB_t
\;\mbox{\rm a.s.\ for}\; j=1,2,\ldots ,d.
\end{align*}
\end{theorem}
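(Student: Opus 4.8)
The plan is to represent $f(X_t,\theta)$ by its absolutely convergent Fourier series in $\theta$, integrate term by term against $dB_t$, and then differentiate the resulting series term by term, justifying each interchange by the uniform bounds on the Fourier coefficients provided by Lemmas \ref{fk}, \ref{fk3} together with It\^o isometry. By the reduction discussed just before the theorem (localization by a finite cover of $Cl(\mathcal{K})$ and multiplication by a test function), it suffices to treat the case $\Theta=\mathcal{K}_0=\langle-\pi,\pi\rangle^d$ with $f$ extended to a function on $E\times\mathbb{R}^d$ satisfying (P1) and (P2); note (P1) only requires derivatives up to order $d+1$, which is covered since $d+2\geq d+1$, and (P2) is arranged by the test-function construction. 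Write
\begin{align*}
f(x,\theta)=\sum_{\mathbf{k}\in\mathbb{Z}^d}C_{\mathbf{k}}(x)e^{i\langle\mathbf{k}|\theta\rangle},
\end{align*}
which converges absolutely and uniformly in $(x,\theta)$ by the estimate displayed after Lemma \ref{fk3}, using \eqref{fk2} with $m=d+1$.

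First I would define, for each $\mathbf{k}$, the random variable $I_{\mathbf{k}}:=\int_0^T C_{\mathbf{k}}(X_t)\,dB_t$; this is well defined since $x\mapsto C_{\mathbf{k}}(x)$ is continuous and bounded on $E$ (it is an integral of a bounded continuous function over a compact box), so $t\mapsto C_{\mathbf{k}}(X_t)$ is adapted and square-integrable on $[0,T]$. By It\^o isometry (Lemma \ref{iso}(\ref{viito}), in its scalar form) and \eqref{fk2},
\begin{align*}
\mathbb{E}\left[I_{\mathbf{k}}^2\right]=\mathbb{E}\left[\int_0^T C_{\mathbf{k}}(X_t)^2\,dt\right]\leq T\,k_2^2\,K_{d+1}(\mathbf{k})^2,
\end{align*}
so $\sum_{\mathbf{k}}\left(\mathbb{E}[I_{\mathbf{k}}^2]\right)^{1/2}<\infty$ by Lemma \ref{fk3}. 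Hence $\sum_{\mathbf{k}}|I_{\mathbf{k}}e^{i\langle\mathbf{k}|\theta\rangle}|$ converges in $L^1(\Omega)$ uniformly in $\theta$, so the series $F(\theta):=\sum_{\mathbf{k}}I_{\mathbf{k}}e^{i\langle\mathbf{k}|\theta\rangle}$ converges a.s.\ (along the full sum, by a standard Borel--Cantelli/monotone argument from summable $L^1$ norms) and defines, for a.e.\ $\omega$, a continuous function of $\theta$. To identify $F(\theta)$ with $\int_0^T f(X_t,\theta)\,dB_t$, I would fix $\theta$ and use that the partial sums $\sum_{|\mathbf{k}|\leq N}C_{\mathbf{k}}(X_t)e^{i\langle\mathbf{k}|\theta\rangle}$ converge to $f(X_t,\theta)$ in $L^2([0,T]\times\Omega)$ — again by dominated convergence using the uniform bound — so the It\^o integrals converge in $L^2(\Omega)$ to $\int_0^T f(X_t,\theta)\,dB_t$, while they also converge to $F(\theta)$; hence the two agree a.s.\ for each fixed $\theta$.

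For differentiability, I would repeat the argument with the formally differentiated series $\sum_{\mathbf{k}}(ik_j)\,I_{\mathbf{k}}\,e^{i\langle\mathbf{k}|\theta\rangle}$. The key point is the identity \eqref{sv_fk}: $k_j C_{\mathbf{k}}(x)= -i\,C_{\mathbf{k}}^{(\mathbf{e}_j)}(x)$, i.e.\ $ik_j C_{\mathbf{k}}(x)=C_{\mathbf{k}}^{(\mathbf{e}_j)}(x)$, the Fourier coefficient of $\partial_{\theta_j}f$. Therefore $ik_j I_{\mathbf{k}}=\int_0^T C_{\mathbf{k}}^{(\mathbf{e}_j)}(X_t)\,dB_t$, and by \eqref{fk2} applied to $\partial_{\theta_j}f$ (which satisfies (P1) with bound one order lower, using that derivatives up to order $d+2$ of $f$ are uniformly bounded) we get $\mathbb{E}[(ik_j I_{\mathbf{k}})^2]\leq T\,k_2'^2\,K_{d+1}(\mathbf{k})^2$, so this differentiated series also converges absolutely in $L^1(\Omega)$ uniformly in $\theta$, and its sum is a.s.\ continuous in $\theta$. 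A theorem on term-by-term differentiation (the uniform limit of the derivatives equals the derivative of the limit, applied $\omega$ by $\omega$ on the relatively compact box) then gives $F(\cdot,\omega)\in C^1(\mathcal{K}_0)$ and $\partial_{\theta_j}F(\theta)=\sum_{\mathbf{k}}ik_j I_{\mathbf{k}}e^{i\langle\mathbf{k}|\theta\rangle}$ a.s.; finally, identifying the right-hand side with $\int_0^T\partial_{\theta_j}f(X_t,\theta)\,dB_t$ for fixed $\theta$ is exactly the same $L^2$-convergence argument as above, now applied to the absolutely convergent Fourier series of $\partial_{\theta_j}f$. Pulling back through the linear change of variables $A$ and patching over the finite cover transfers the conclusion to a general open $\Theta$.

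The main obstacle I anticipate is bookkeeping around the exceptional null sets: the a.s.\ identification $F(\theta)=\int_0^T f(X_t,\theta)\,dB_t$ holds a priori for each fixed $\theta$ on a $\theta$-dependent full-measure set, and one must upgrade this to a single full-measure set on which $F(\cdot,\omega)$ is $C^1$ and the stated identities hold for all $\theta$ simultaneously. This is resolved precisely by the uniform-in-$\theta$ $L^1(\Omega)$ summability of the Fourier series and of its formal derivative: these give a.s.\ uniform convergence on $\mathcal{K}_0$ of both series to continuous functions, so $F$ and $\partial_{\theta_j}F$ are genuine (continuous) random fields, and continuity in $\theta$ of both sides lets one promote equality from a dense countable set of $\theta$ to all $\theta$ on one common null set. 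Beyond that the estimates are routine applications of It\^o isometry and Lemmas \ref{fk}--\ref{fk3}; nothing deeper than the Fourier apparatus already set up is needed. Remark \ref{four} shows the boundary term, were (P2) to fail, contributes only a bounded (in $\mathbf{k}$) correction, which is why the test-function reduction costs nothing.
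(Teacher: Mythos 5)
Your proposal is correct and takes essentially the same route as the paper's proof: reduction to $\mathcal{K}_0$ via the test-function extension, termwise stochastic integration of the Fourier series, the summable $L^2$-bounds from Lemmas \ref{fk} and \ref{fk3} (with the order-$(d+2)$ boundedness feeding the decay of $C_{\mathbf{k}}^{(e_j)}$) to obtain a.s.\ uniform convergence of the series and of its formal $\theta_j$-derivative, and identification with $\int_0^T f(X_t,\theta)\,dB_t$ through convergence of the partial sums. The only cosmetic difference is that you identify the limit via $L^2$-convergence of the It\^o integrals where the paper invokes the dominated convergence theorem for stochastic integrals (convergence in probability); both suffice.
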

\begin{proof} [Proof of Theorem \ref{nepr}]
Let $\theta'\in\Theta$ be an arbitrary point. Since $\Theta$ is an open set, there exists open neighborhood $U'\subset \Theta$ of $\theta'$ which is $d$-dimensional rectangular such that $Cl(U')\subset \Theta$. Without loss of generality, let us assume that $U'=\mathcal{K}_0$, and that $f$ can be extended on $E\times\mathbb{R}^d$ such that it satisfies (P1) and (P2).\\
Let $C_\mathbf{k}(x)=(1/(2\pi)^d)\int_{Cl (\mathcal{K}_0)} f(x,\theta )e^{-i\left<\mathbf{k}|\theta\right>}d\theta$ be a Fourier coefficient of a scalar function $f$ for $\mathbf{k}\in\mathbb{Z}^d$ such that $|\mathbf{k}|\leq m=d+1< d+2$. Then $C_\mathbf{k}(\cdot )$ is a bounded and continuous function on $E$ by Lemma \ref{fk} that implies that stochastic 
integral $\int_0^T C_\mathbf{k} (X_t)\, dB_t$ is well defined.
It follows that
\begin{align*}
\mathbb{E}\left( \sum_{\mathbf{k}\in\mathbb{Z}^d}\left|\int_0^T\!\! C_\mathbf{k} (X_t)\, dB_t\right|\right) &\leq 
\sum_{\mathbf{k}\in\mathbb{Z}^d}\norm{\int_0^T\!\! C_\mathbf{k} (X_t)\, dB_t}_{L^2}=\\
&= \sum_{\mathbf{k}\in\mathbb{Z}^d}\left(\mathbb{E}\int_0^T\!\! C_\mathbf{k}^2 (X_t)\, dt\right)^{\frac{1}{2}} \leq \\
&\leq\sqrt{T} k_2 \sum_{\mathbf{k}\in \mathbb{Z}^d} K_{d+1}(\mathbf{k}) < +\infty
\end{align*}
by Lemmas \ref{fk} and \ref{fk3} implying that
\[ \sum_{\mathbf{k}\in\mathbb{Z}^d}\left|\int_0^T C_\mathbf{k} (X_t)\, dB_t\cdot e^{i\left<\mathbf{k}|\theta\right>}\right|\leq \sum_{\mathbf{k}\in\mathbb{Z}^d}\left|\int_0^T C_\mathbf{k} (X_t)\, dB_t\right|<+\infty\;\mbox{\rm a.s.}
\]
Hence series $\sum_{\mathbf{k}\in\mathbb{Z}^d}\int_0^T C_\mathbf{k} (X_t)\, dB_t\cdot e^{i\left<\mathbf{k}|\theta\right>}$ converges absolutely and uniformly for all $\theta\in \mathcal{K}_0$ on event
$\Omega_0:=\{\sum_{\mathbf{k}\in\mathbb{Z}^d}\left|\int_0^T C_\mathbf{k} (X_t)\, dB_t\right|<+\infty\}$ of 
probability 1. Then for all fixed $\omega\in\Omega_0$, its sum 
\[ F(\omega,\theta ):=\sum_{\mathbf{k}\in\mathbb{Z}^d}\int_0^T C_\mathbf{k} (X_t)\, dB_t (\omega )\cdot e^{i\left<\mathbf{k}|\theta\right>},\; \theta\in\mathcal{K}_0,\]
is a continuous function on $\mathcal{K}_0$. Moreover, 
\begin{align*}
\sum_{\mathbf{k}\in\mathbb{Z}^d}\left|\frac{\partial}{\partial\theta_j}\left(\int_0^T C_\mathbf{k} (X_t)\, dB_t \cdot e^{i\left<\mathbf{k}|\theta\right>}\right)\right|&= \sum_{\mathbf{k}\in\mathbb{Z}^d}\left|\int_0^T iC_\mathbf{k} (X_t)k_j\, dB_t \right|\stackrel{(\ref{sv_fk})}{=} \\
&=\sum_{\mathbf{k}\in\mathbb{Z}^d}\left|\int_0^T C_\mathbf{k}^{(e_j)} (X_t)\, dB_t \right|
\end{align*}
for all $j=1,2,\ldots,d$, and hence functional series $\sum_{\mathbf{k}\in\mathbb{Z}^d}\frac{\partial}{\partial\theta_j}(\int_0^T C_\mathbf{k} (X_t)\, dB_t \cdot e^{i\left<\mathbf{k}|\theta\right>})$ converges absolutely and uniformly on event
\[\Omega_j :=\left\{ \sum_{\mathbf{k}\in\mathbb{Z}^d}\left|\int_0^T C_\mathbf{k}^{(e_j)} (X_t)\, dB_t \right|
<+ \infty\right\}.\]
Since $C_\mathbf{k}^{(e_j)}(x)=(1/(2\pi)^d)\int_{Cl (\mathcal{K}_0)} \frac{\partial}{\partial\theta_j}f(x,\theta )e^{-i\left<\mathbf{k}|\theta\right>}d\theta$ and $\frac{\partial}{\partial\theta_j}f$ is a bounded function, 
it follows that $\mathbb{P}(\Omega_j)=1$, and hence  
$F(\omega,\cdot)\in C^1(\mathcal{K}_0)$ and 
\[\frac{\partial}{\partial\theta_j}F(\omega,\theta )=\sum_{\mathbf{k}\in\mathbb{Z}^d}\int_0^T C_\mathbf{k}^{(e_j)} (X_t)\, dB_t (\omega )\cdot e^{i\left<\mathbf{k}|\theta\right>}\]
for all $\omega\in\Omega_0\cap\bigcap_{j=1}^d\Omega_j$ that is an event of probability 1.\\
Let $S_N^{(0)}(x,\theta ):=\sum_{|\mathbf{k}|\leq N}C_\mathbf{k}(x)e^{i\left<\mathbf{k}|\theta\right>}$ and 
$S_N^{(j)}(x,\theta ):=\sum_{|\mathbf{k}|\leq N}C_\mathbf{k}^{(e_j)}(x)e^{i\left<\mathbf{k}|\theta\right>}$ $(N\in\mathbb{N}$) be partial sums of corresponding Fourier series ($j=1,2,\ldots,d$). Since 
$S_N^{(0)}(X_t,\theta )- f(X_t,\theta)$ and $S_N^{(j)}(X_t,\theta )-\frac{\partial}{\partial\theta_j}f(X_t,\theta)$
are uniformly bounded with constants by Lemmas \ref{fk} and \ref{fk3}, and
\[\lim_{N \to +\infty} S_N^{(0)}(X_t,\theta )=f(X_t,\theta),\;\mbox{\rm and}\; \lim_{N \to +\infty} S_N^{(j)}(X_t,\theta )=\frac{\partial}{\partial\theta_j}f(X_t,\theta),\]
for all $(t,\theta )\in [0,T]\times\mathcal{K}_0$, it follows that
\begin{align*}
\int_0^T S_N^{(0)}(X_t,\theta )\,dB_t &\overset{\mathbb{P}}{\to}\int_0^T f(X_t,\theta)\, dB_t,\, N \to +\infty,\\
\int_0^T S_N^{(j)}(X_t,\theta )\,dB_t & \overset{\mathbb{P}}{\to} \int_0^T \frac{\partial}{\partial\theta_j}f(X_t,\theta)\,dB_t, \, N \to +\infty,
\end{align*}
by the dominated convergence theorem for stochastic integrals \cite[Theorem 2.12]{ry}. On the other hand, since 
$\int_0^T S_N^{(0)} (X_t,\theta )\, dB_t$ and $\int_0^T S_N^{(j)} (X_t,\theta )\, dB_t$ are partial sums of 
functional series that a.s.\ converge to $F(\cdot,\theta )$ and $\frac{\partial}{\partial\theta_j}F(\cdot,\theta )$ respectively, it follows that
\[F(\cdot,\theta )=\int_0^T f(X_t,\theta)\, dB_t\;\mbox{\rm a.s.,}\;\; \mbox{\rm and}\;\; 
\frac{\partial}{\partial\theta_j}F(\cdot,\theta )=\int_0^T \frac{\partial}{\partial\theta_j}f(X_t,\theta)\,dB_t
\;\mbox{\rm a.s.}\]
We can extend definition of $F$ on $\Theta $ in the following way. Let $\{ U_j : j\in\mathbb{N}\}$ be a countable open covering of $\Theta $ such that $U_j\subset Cl(U_j)\subset \Theta$ be an open rectangular. If  $F_{U_j}$ is the function defined by Fourier series like above, then $F(\omega,\theta ):=F_{U_j}(\omega, \theta)$ if $\theta\in U_j$ and  for $\omega$ from the probability-one-event obtained as countable intersections of probability-one-events like above. If there exists $j'\neq j$ such that $\theta\in U_j\cap U_{j'}$, then 
$F_{U_{j'}}(\theta )=\int_0^T f(X_t,\theta )\, dB_t =
F_{U_j}(\theta )$ a.s. This implies that $F$ is correctly defined.
\end{proof}

\begin{proof} [Proof of Theorem \ref{tm_nepr}] 
The conclusions follow directly by applying \eqref{sde} for $\theta=\theta_0$ and then Theorem \ref{nepr} on It\^{o} stochastic integrals with respect to components of k-dimensional Brownian motion $W$, and from the fact that integrals with respect to the time variable and partial derivatives with respect to $\theta$ commute.
\end{proof}

\begin{theorem}\label{unif_b}
Let $\left(X_t\right)_{1\leq t \leq T}$ be a random process that satisfies \eqref{sde}. Let $E \subseteq \mathbb{R}^k$ and $\Theta \subseteq \mathbb{R}^d$ be open and convex sets. Moreover, let $f: E\times \Theta \to \mathbb{R}^k$ be a function that satisfies (P1). We assume that $\mu(\cdot, \theta_0)$ and $\nu$ are bounded and continuous on $E$.\\
Let $a : E \to \mathbb{R}^k$ be a bounded vector valued function. For every relatively compact set $\mathcal{K} \subset \Theta$ there exist constants $C_1, C_2>0$ such that:
\begin{align}\label{o_dt}
\norm{\sup_{\theta \in \Theta} \Big|\sum_{i=1}^n\int_{t_{i-1}}^{t_i} \left<f(X_t,\theta)-f(X_{i-1},\theta)\,\big| \, a(X_t)\right>\,dt\Big|}_{L^2}\leq C_1 \sqrt{\Delta_n}
\end{align}
\begin{align}\label{o_dw}
\norm{\sup_{\theta \in \mathcal{K}} \Big|\sum_{i=1}^n\int_{t_{i-1}}^{t_i} \left<f(X_t,\theta)-f(X_{i-1},\theta)\,\big| \, \nu(X_t)
\,dW_t\right>\Big|}_{L^2}\leq C_2 \sqrt{\Delta_n}.
\end{align}
\end{theorem}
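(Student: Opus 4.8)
The plan is to prove the two bounds \eqref{o_dt} and \eqref{o_dw} separately: \eqref{o_dt} follows from the mean value theorem together with Lemma \ref{pl}, while \eqref{o_dw} needs the Fourier machinery of Lemmas \ref{pl}, \ref{fk} and \ref{fk3} to deal with the parameter-supremum sitting in front of a stochastic integral.

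For \eqref{o_dt}: by (P1) the partial derivatives $\partial f/\partial x_p$ are continuous and uniformly bounded on $E\times\Theta$, and since $E$ is open and convex every segment $[X_{i-1},X_t]$ lies in $E$, so Lemma \ref{mvt} gives, for each $\theta$ and each $t\in[t_{i-1},t_i]$, $\norm{f(X_t,\theta)-f(X_{i-1},\theta)}_2\le M\norm{X_t-X_{i-1}}_2$ with $M:=\sup_{E\times\Theta}\norm{\nabla_x f}_F<+\infty$ independent of $\theta$ (using consistency of the Frobenius norm with the Euclidean norm). By Cauchy--Schwarz and the boundedness of $a$, the random variable $\sup_{\theta\in\Theta}\bigl|\sum_{i=1}^n\int_{t_{i-1}}^{t_i}\left<f(X_t,\theta)-f(X_{i-1},\theta)\,|\,a(X_t)\right>dt\bigr|$ is pathwise at most $M\bigl(\sup_E\norm{a}_2\bigr)\sum_{i=1}^n\int_{t_{i-1}}^{t_i}\norm{X_t-X_{i-1}}_2\,dt$, which does not depend on $\theta$. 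Taking $L^2$-norms and using Minkowski's integral inequality to move the norm inside the sum and integral, Lemma \ref{pl} bounds $\bigl(\mathbb{E}\norm{X_t-X_{i-1}}_2^2\bigr)^{1/2}$ by $\sqrt{2K(T+1)}\,\sqrt{\Delta_n}$ (since $t-t_{i-1}\le\Delta_n\le T$); summing $\Delta_n$ over the $n$ subintervals contributes the factor $T$ and yields \eqref{o_dt}.

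For \eqref{o_dw} the pointwise estimate no longer survives the $L^2$-norm, so the plan is to invoke the reduction set up just before assumptions (P1)--(P2): cover $Cl(\mathcal{K})$ by finitely many open sets $U_j$ on each of which $f(x,\theta)=\tilde f_j(x,A_j\theta)$ for a regular linear map $A_j$ with $A_j(U_j)\subseteq\mathcal{K}_0=\left<-\pi,\pi\right>^d$ and $\tilde f_j$ satisfying (P1)--(P2); since after the substitution $\phi=A_j\theta$ one has $\sup_{\theta\in\mathcal{K}}|\,\cdot\,|\le\sum_j\sup_{\phi\in\mathcal{K}_0}|\,\cdot\,|$, it suffices to prove \eqref{o_dw} for a single function $\tilde f$ satisfying (P1)--(P2) with $\mathcal{K}=\mathcal{K}_0$. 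On $\mathcal{K}_0$ write the absolutely and uniformly convergent Fourier expansion $\tilde f(x,\theta)=\sum_{\mathbf{k}\in\mathbb{Z}^d}C_{\mathbf{k}}(x)e^{i\left<\mathbf{k}|\theta\right>}$ (legitimate by Lemmas \ref{fk} and \ref{fk3}), and put $\Phi_{\mathbf{k}}:=\sum_{i=1}^n\int_{t_{i-1}}^{t_i}\left<C_{\mathbf{k}}(X_t)-C_{\mathbf{k}}(X_{i-1})\,|\,\nu(X_t)\,dW_t\right>$. Exactly as in the proof of Theorem \ref{nepr} --- replace $\tilde f$ by the partial sums $S_N^{(0)}$ of its Fourier series, which converge to $\tilde f$ uniformly and with a common bound, and pass to the limit by the dominated convergence theorem for stochastic integrals --- one gets, for each fixed $\theta$ almost surely, $\sum_{i=1}^n\int_{t_{i-1}}^{t_i}\left<\tilde f(X_t,\theta)-\tilde f(X_{i-1},\theta)\,|\,\nu(X_t)\,dW_t\right>=\sum_{\mathbf{k}\in\mathbb{Z}^d}e^{i\left<\mathbf{k}|\theta\right>}\Phi_{\mathbf{k}}$. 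Since both sides are a.s.\ continuous in $\theta$ and $\sum_{\mathbf{k}}|\Phi_{\mathbf{k}}|<+\infty$ a.s.\ (from the $L^2$-bound below and monotone convergence), the identity holds for all $\theta$ simultaneously a.s., and hence $\sup_{\theta\in\mathcal{K}_0}|\,\cdot\,|\le\sum_{\mathbf{k}\in\mathbb{Z}^d}|\Phi_{\mathbf{k}}|$ a.s.

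It then remains to bound $\norm{\Phi_{\mathbf{k}}}_{L^2}$ and sum over $\mathbf{k}$. Writing $\Phi_{\mathbf{k}}=\int_0^T V_t^T\,dW_t$ with $V_t=\nu(X_t)^T\bigl(C_{\mathbf{k}}(X_t)-C_{\mathbf{k}}(X_{\eta(t)})\bigr)$, where $\eta(t)$ denotes the left endpoint of the subinterval containing $t$, It\^o isometry (Lemma \ref{iso}(\ref{viito})), consistency of the Frobenius norm with the Euclidean norm, the boundedness of $\nu$ on $E$, the Lipschitz estimate \eqref{fk1} of Lemma \ref{fk} with $m=d+1$, and Lemma \ref{pl} give $\mathbb{E}[\Phi_{\mathbf{k}}^2]\le c\,K_{d+1}(\mathbf{k})^2\,\Delta_n$ with $c$ independent of $\mathbf{k}$ and $n$, i.e.\ $\norm{\Phi_{\mathbf{k}}}_{L^2}\le\sqrt{c}\,K_{d+1}(\mathbf{k})\,\sqrt{\Delta_n}$. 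Summing with Minkowski and Lemma \ref{fk3}, $\norm{\sup_{\theta\in\mathcal{K}_0}|\,\cdot\,|}_{L^2}\le\sum_{\mathbf{k}\in\mathbb{Z}^d}\norm{\Phi_{\mathbf{k}}}_{L^2}\le\sqrt{c}\,\sqrt{\Delta_n}\sum_{\mathbf{k}\in\mathbb{Z}^d}K_{d+1}(\mathbf{k})<+\infty$, and multiplying by the finite number of covering sets $U_j$ yields \eqref{o_dw}. The step I expect to be the main obstacle is precisely this trade of the parameter-supremum of the stochastic integral for the $\ell^1$-type sum $\sum_{\mathbf{k}}|\Phi_{\mathbf{k}}|$ --- the interchange of the infinite Fourier sum with the It\^o integral and the passage from ``a.s.\ for each $\theta$'' to ``a.s.\ for all $\theta$'' --- together with the bookkeeping of the reduction to $\mathcal{K}_0$; once these are in place, the surviving estimates are $\mathbf{k}$-uniform and are handed to us directly by Lemmas \ref{pl}, \ref{fk} and \ref{fk3}.
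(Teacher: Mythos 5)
Your proposal is correct and follows essentially the same route as the paper: the mean value theorem (Lemma \ref{mvt}) with the uniform bound on $\nabla_x f$ from (P1), Cauchy--Schwarz and Lemma \ref{pl} for \eqref{o_dt}; and for \eqref{o_dw} the reduction to $\mathcal{K}_0$, the Fourier expansion with partial sums $S_N$ passed through the dominated convergence theorem for stochastic integrals, the identification on a countable dense set of $\theta$'s upgraded by the a.s.\ continuity supplied by Theorem \ref{nepr}, and the $\mathbf{k}$-uniform $L^2$-bound on each $\Phi_{\mathbf{k}}$ via It\^o isometry, \eqref{fk1} and Lemmas \ref{pl} and \ref{fk3}. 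The only differences are cosmetic (Minkowski in $L^2$ versus squaring and Cauchy--Schwarz for \eqref{o_dt}, and asserting the full identity $I(\theta)=\sum_{\mathbf{k}}e^{i\left<\mathbf{k}|\theta\right>}\Phi_{\mathbf{k}}$ for all $\theta$ rather than only the inequality on rationals).
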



\begin{proof} [Proof of Theorem \ref{unif_b}]
Let $c_{20}, \dots, c_{24}$ denote 
 positive constants occurring in the proof. Using Cauchy-Schwarz inequality for integrals and vectors, Lemma \ref{mvt} and boundedness of functions $\nabla_xf$ and $a$, we have that there exists a constant $c_{20}$ such that
\begin{align*}
&\mathbb{E}\left[\left(\sup_{\theta \in \Theta}\Big|\sum_{i=1}^n \int_{t_{i-1}}^{t_i} \left<f(X_t, \theta)-f(X_{i-1}, \theta)\,\big| \, a(X_t)\right>\,dt \Big|\right)^2\right]\leq\\
&\leq c_{20} \mathbb{E}\left[\sum_{i=1}^n  \int_{t_{i-1}}^{t_i} \norm{X_t-X_{i-1}}_2^2\,dt\right].
\end{align*}
For final step, we use Lemma \ref{pl}.
\begin{align}\label{end}
 c_{20}  \sum_{i=1}^n \int_{t_{i-1}}^{t_i} \mathbb{E}\left[\norm{X_t-X_{i-1}}_2^2\right]\,dt&\leq  c_{20}  \sum_{i=1}^n  \int_{t_{i-1}}^{t_i} 2K \bigl((t-t_{i-1})^2+\notag\\
 &+(t-t_{i-1})\bigr)\,dt\leq\notag\\
&\leq c_{20} K \left(\frac{2}{3}T^2+T\right) \Delta_n=\notag\\
&\eqqcolon C_1^2 \Delta_n
\end{align}
so inequality \eqref{o_dt} is proved. 

Next, we prove inequality \eqref{o_dw} by following the arguments from part of the proof of Theorem 6.1 in \cite{huzak2018}. The first main difference is that Brownian motion $W$ is multidimensional, so the first part of the proof is derived componentwise, and then the continuous mapping theorem is used. Also, the observed time interval is fixed compared to the proof of Theorem 6.1 in \cite{huzak2018}, where this is not the case. For this reason, it is easier to justify the application of the dominated convergence theorem for stochastic integrals to the integrals defined below. Moreover, for the same reason, it is easier to obtain the $L^2$-bound of the right-hand side of the inequality in \eqref{red}. To clarify the arguments presented and to distinguish details due to multidimensionality, we provide the entire proof.

Without loss of generality, it is sufficient to prove it for $\mathcal{K}=\mathcal{K}_0$.\\
Let $S_N(x, \theta)=\sum_{|\mathbf{k}|\leq N} C_{\mathbf{k}}(x)e^{i\left<\mathbf{k}|\theta\right>}$ be $N$-th partial sum of Fourier's series of functions $f(x, \theta)$. For fixed $\theta \in \mathcal{K}_0$ and $N \in \mathbb{N}$, let us define the following random processes
\begin{align*}
V_t &\coloneqq \sum_{l=1}^k \sum_{i=1}^n \left<f(X_t,\theta)-f(X_{i-1}, \theta)\,\big| \, \nu(X_t)e_l\right> \mathbbm{1}_{\left<t_{i-1},\, t_i\right]}(t),\quad t \in [0, \,T],\\
V_t^{(N)} &\coloneqq \sum_{l=1}^k \sum_{i=1}^n \left<S_N(X_t, \theta)-S_N(X_{i-1}, \theta)\,\big| \, \nu(X_t)e_l\right> \mathbbm{1}_{\left<t_{i-1},\, t_i\right]}(t), \quad t \in [0,T].
\end{align*}
Moreover, let us define $M_N \coloneqq \sup_{x \in E, \theta \in Cl(\mathcal{K}_0)} \norm{S_N(x,\theta)-f(x, \theta)}_2$ and \\$M \coloneqq \sup_{x \in E} \sum_{\mathbf{k} \in \mathbb{Z}^d} \norm{C_{\mathbf{k}} (x)}_2$.
For $m=d+1$, using \eqref{fk2} in Lemma \ref{fk} and Lemma \ref{fk3} we conclude that $M$ is finite. \\For every $N \in \mathbb{N}$
\begin{align}\label{tdk1}
\Big|V_t^{(N)} \Big| &\leq \sum_{l=1}^k \sum_{i=1}^n \norm{\sum_{|\mathbf{k}|\leq N}\left(C_{\mathbf{k}}(X_t)-C_{\mathbf{k}}(X_{i-1})\right)}_2 \norm{\nu(X_t)e_l}_2 \mathbbm{1}_{\left<t_{i-1}, t_i\right]}(t)\leq\notag\\
&\leq  2M c_{21}
\end{align}
where $c_{21}=\max_{l=1,\dots, k}\norm{\nu(X_t)e_l}_2$.\\
Because of smoothness of function $f$, it can be shown that for fixed $x \in E$,\\ $\lim_{N \to +\infty}\sup_{\theta \in Cl(\mathcal{K}_0)} \norm{S_N(x, \theta)-f(x,\theta)}_2=0$ holds (see \cite{taylor}). Moreover, using \eqref{fk2} in Lemma \ref{fk} and  Lemma \ref{fk3}, for all $x \in E$ we have
\begin{align*}
\norm{S_N(x, \theta)-f(x,\theta)}_2\leq \sum_{\mathbf{k}>N}\norm{C_{\mathbf{k}}(x)}_2\leq k_2  \sum_{\mathbf{k}>N} K_{d+1}(\mathbf{k})<+\infty.
\end{align*}
From the previous inequality we can easily determine that $M_N \to 0$.\\
In the same manner as in \eqref{tdk1}, we obtain
\begin{align*}
\sup_{t \in [0, T]} |V_t^{(N)}-V_t|\leq 2 M_N c_{21} \to 0, \quad N\to +\infty.
\end{align*}
The dominated convergence theorem for stochastic integrals \cite[Theorem 2.12]{ry} applied on It\^o integrals with respect to each component of Brownian motion $(W_t)_t$ yields
\begin{align*}
\int_0^T V_t^{(N)}\,dW_t^l \overset{\mathbb{P}}{\to} \int_0^T V_t\, dW_t^l, \text{ for } l=1,\dots, k,
\end{align*}
when $N \to +\infty$.
Let us denote
\begin{align*}
I_N(\theta)&\coloneqq \sum_{l=1}^k \int_0^T V_t^{(N)}\,dW_t^l=\sum_{i=1}^n \int_{t_{i-1}}^{t_i} \left<S_N(X_t, \theta)-S_N(X_{i-1},\theta)\,\big| \, \nu(X_t)\,dW_t\right>,\\
I(\theta)&\coloneqq \sum_{l=1}^k \int_0^T V_t\,dW_t^l=\sum_{i=1}^n \int_{t_{i-1}}^{t_i} \left<f(X_t,\theta)-f(X_{i-1}, \theta)\,\big| \, \nu(X_t)\,dW_t\right>.
\end{align*}
Using continuous mapping theorem \cite[Theorem 2.3]{vaart}, it follows that $I_N(\theta) \overset{\mathbb{P}}{\to} I(\theta)$ when $N \to +\infty$.\\
Hence, for every $\theta \in \mathcal{K}_0\cap \mathbb{Q}^d$ exists a subsequence $(N_p) \equiv (N_p(\theta))$ and an event $A(\theta)$, $\mathbb{P}(A)=1$, such that for all $\omega \in A(\theta)$, $\lim_{p \to +\infty} I_{N_p}(\theta)(\omega)=I(\theta)(\omega)$.
We define $A_0 \coloneqq \cap_{\theta \in \mathcal{K}_0 \cap \mathbb{Q}^d}A(\theta)$ and it is also the events of probability one. On event $A_0$, for $\theta \in \mathcal{K}_0 \cap \mathbb{Q}^d$ we have that
\begin{align*}
\big|I(\theta)\big|&\leq \big|I(\theta)-I_{N_p}(\theta) \big|+\\
&+\Big|\sum_{i=1}^n \int_{t_{i-1}}^{t_i} \left<\sum_{|\mathbf{k}|\leq N_p}\left(C_{\mathbf{k}}(X_t)-C_{\mathbf{k}}(X_{i-1})\right)e^{-i\left<\mathbf{k}|\theta\right>}\,\Big| \, \nu(X_t)\,dW_t\right>\Big|\leq\\
&\leq \big|I(\theta)-I_{N_p}(\theta) \big|+ \sum_{\mathbf{k}\in \mathbb{Z}^d} \Big|\sum_{i=1}^n \int_{t_{i-1}}^{t_i} \left<C_{\mathbf{k}}(X_t)-C_{\mathbf{k}}(X_{i-1})\,\big| \, \nu(X_t)\,dW_t\right>\Big|.
\end{align*}
After letting $p \to +\infty$, it follows
\begin{align*}
\big|I(\theta)\big|\leq \sum_{\mathbf{k}\in \mathbb{Z}^d} \Big|\sum_{i=1}^n \int_{t_{i-1}}^{t_i}\left<C_{\mathbf{k}}(X_t)-C_{\mathbf{k}}(X_{i-1})\,\big| \, \nu(X_t)\,dW_t\right>\Big|.
\end{align*}
For $g(X_t,\theta)=\sum_{i=1}^n\left(f(X_t,\theta)-f(X_{i-1}, \theta)\right)\mathbbm{1}_{\left<t_{i-1},\, t_i\right]}(t)$ using Theorem \ref{nepr} the mapping $\theta \mapsto I(\theta)$ is almost surely a continuous function. There is an event $B_0$ of probability one such that for every $\omega \in B_0$, $\sup_{\theta \in \mathcal{K}_0}\big|I(\theta)(\omega)\big|=\sup_{\theta \in \mathcal{K}_0\cap \mathbb{Q}^d}\big|I(\theta)(\omega)\big|$. Accordingly, on event $C_0\coloneqq A_0\cap B_0$ of probability one, we have that
\begin{align}\label{red}
\sup_{\theta \in \mathcal{K}_0}\big|I(\theta)\big|\leq \sum_{\mathbf{k}\in \mathbb{Z}^d} \Big|\sum_{i=1}^n \int_{t_{i-1}}^{t_i}\left<C_{\mathbf{k}}(X_t)-C_{\mathbf{k}}(X_{i-1})\,\big| \, \nu(X_t)\,dW_t\right>\Big|.
\end{align}
In analysing the squared $L^2$-norm of the right-hand side in \eqref{red}, we use Lemma \ref{iso} \eqref{viito}, the norm consistency, \eqref{fk1} from Lemma \ref{fk} ($m=d+1$), and Lemma \ref{pl}. The last inequality is obtained in the same way as in \eqref{end}.
\allowdisplaybreaks
\begin{align*}
&\mathbb{E}\left[\Big|\sum_{i=1}^n \int_{t_{i-1}}^{t_i} \left<C_{\mathbf{k}}(X_t)-C_{\mathbf{k}}(X_{i-1})\,\big| \, \nu(X_t)\,dW_t\right>\Big|^2\right]=\\
&=\sum_{i=1}^n \mathbb{E}\left[\left(\int_{t_{i-1}}^{t_i} \left(C_{\mathbf{k}}(X_t)-C_{\mathbf{k}}(X_{i-1})\right)^T \nu(X_t)\,dW_t\right)^2\right]\\
&+2\sum_{1 \leq i<l \leq n}\mathbb{E}\Biggl[\int_{t_{i-1}}^{t_i}\left<C_{\mathbf{k}}(X_t)-C_{\mathbf{k}}(X_{i-1})\,\big|\,\nu(X_t)\,dW_t\right>\cdot\\
&\int_{t_{l-1}}^{t_l}\left<C_{\mathbf{k}}(X_t)-C_{\mathbf{k}}(X_{l-1})\,\big|\,\nu(X_t)\,dW_t\right>\Biggr]=\\
&=\sum_{i=1}^n \mathbb{E}\left[\int_{t_{i-1}}^{t_i} \norm{\nu(X_t)^T\left(C_{\mathbf{k}}(X_t)-C_{\mathbf{k}}(X_{i-1})\right)}_2^2\,dt\right]\leq\\
&\leq c_{22} \mathbb{E}\left[\sum_{i=1}^n \int_{t_{i-1}}^{t_i} \norm{C_{\mathbf{k}}(X_t)-C_{\mathbf{k}}(X_{i-1})}_2^2\,dt\right]\leq\\
&\leq c_{22} \mathbb{E}\left[\sum_{i=1}^n \int_{t_{i-1}}^{t_i}k_1^2 \left(\frac{d+1}{1+|k_1|+\cdots+|k_d|}\right)^{2(d+1)} \norm{X_t-X_{i-1}}_2^2\,dt\right]\leq\\
&\leq c_{23} \left(\frac{d+1}{1+|k_1|+\cdots+|k_d|}\right)^{2(d+1)} \sum_{i=1}^n \int_{t_{i-1}}^{t_i} \mathbb{E}\left[\norm{X_t-X_{i-1}}_2^2\right]\,dt\leq\\
&\leq c_{24} (d+1)^{2(d+1)}\frac{1}{(1+|k_1|+\cdots+|k_d|)^{2(d+1)}} \Delta_n
\end{align*}
Since series $\sum_{\mathbf{k}\in \mathbb{Z}^d} \left(\frac{1}{1+|k_1|+\cdots+|k_d|}\right)^{d+1}$ converges, we have
\begin{align*}
\sum_{k \in \mathbb{Z}^d} \norm{\sum_{i=1}^n \int_{t_{i-1}}^{t_i} \left<C_{\mathbf{k}}(X_t)-C_{\mathbf{k}}(X_{i-1})\,\big| \, \nu(X_t)\,dW_t\right>}_{L^2}< + \infty.
\end{align*}
Therefore, the series on the right hand side of \eqref{red} converges in $L^2$ and almost sure to the same limits that are equal almost sure \cite[Proposition 2.10.1]{brockdavis}.  Finally, we obtain
\begin{align*}
\norm{\sup_{\theta \in \mathcal{K}_0} |I(\theta)|}_{L^2}\leq C_2 \sqrt{\Delta_n}.
\end{align*}
\end{proof}

\begin{proof} [Proof of Theorem \ref{main1}]
Let us assume that $E$ is a compact set. Let $c_8, \dots, c_{19}$ be positive constants occurring in the proof.\\
Using abbreviation in \eqref{gj} we define a random vector $\chi_i^n \coloneqq [ \chi_i^{n,1}, \dots, \chi_i^{n,d}]^T$ where each component is equal to
\begin{align*}
\chi_i^{n,j} \coloneqq \frac{1}{\sqrt{\Delta_n}} \int_{t_{i-1}}^{t_i}\left<\int_{t_{i-1}}^s \nabla g_j(X_u)\nu(X_u) \, dW_u\,\bigg| \, \nu(X_s) \, dW_s\right>.
\end{align*}
We also introduce matrix functions $R^{(j)}$ and column vectors \\ $J^{(i,j)}=\left[J^{(i,j)}_1, \dots, J^{(i,j)}_k\right]^T$ in the following form.
\begin{align*}
R^{(j)}(t, \theta)\coloneqq \nabla g_j(X_t)\nu(X_t),\quad j=1,2,\dots, d
\end{align*}
\begin{align*}
J^{(i,j)}(s)\coloneqq \int_{t_{i-1}}^s R^{(j)}(u, \theta)\, dW_u
\end{align*}
Let $C=(C_{jl}(t))_{0\leq t \leq T}$ be a continuous adapted process given by
\begin{align*}
C_{jl}(t)=\frac{1}{2}\int_0^t \sum_{p,r=1}^k S_{pr}(X_s)\left<e_r^T R^{(j)}(s,\theta)\,| \, e_p^T R^{(l)}(s,\theta)\right>\,ds, \quad j,l=1, \dots, d.
\end{align*}
We will prove that all conditions of Theorem \ref{jac} are fulfilled.
Since $E$ is a compact set, the matrix function $R^{(j)}(t, \theta)$ is bounded for every $j=1,2, \dots, d$, and  for every $i=1,2,\dots, n$ vectors $\chi_i^{n}$ are square-integrable random vectors. Using the definition of the scalar product, it follows that for all $i=1,2,\dots, n$,  $\chi_i^{n}$ is a sum of It\^o integrals, hence it is a martingale. The equality $\mathbb{E}\left[\chi_i^{n,j}|\mathcal{F}_{n, i-1}\right]=0$ trivially implies the statement of condition \eqref{ppt1}.

To satisfy condition \eqref{ppt3}, it is sufficient to prove that for $1 \leq j \leq d$ and $1\leq m \leq k$
\begin{align*}
\sum_{i=1}^{A_n^t} \mathbb{E}\left[\chi_i^{n,j} \left(W_{t_i}^m - W_{t_{i-1}}^m\right)\bigg| \mathcal{F}_{n,i-1}\right] \overset{\mathbb{P}}{\to} 0.
\end{align*}
Using the product formula, we have
\begin{align*}
&\sum_{i=1}^{A_n^t} \mathbb{E}\left[\chi_i^{n,j} \left(W_{t_i}^m - W_{t_{i-1}}^m\right)\big| \mathcal{F}_{n,i-1}\right]=\\
&=\frac{1}{\sqrt{\Delta_n}}\sum_{i=1}^{A_n^t} \sum_{p=1}^k \mathbb{E}\left[\int_{t_{i-1}}^{t_i} \left(J^{(i,j)}(s)\right)^T \nu(X_s) e_p \,dW_s^p \cdot \int_{t_{i-1}}^{t_i} \,dW_s^m \bigg| \mathcal{F}_{n, i-1}\right]=\\
&= \frac{1}{\sqrt{\Delta_n}}\sum_{i=1}^{A_n^t} \sum_{p=1}^k \mathbb{E}\Bigl[ \int_{t_{i-1}}^{t_i} \int_{t_{i-1}}^s\left(J^{(i,j)}(u)\right)^T \nu(X_u) e_p \,dW_u^p \,dW_s^m+\\
&+\int_{t_{i-1}}^{t_i} \int_{t_{i-1}}^s \,dW_u^m \left(J^{(i,j)}(s)\right)^T \nu(X_s) e_p \,dW_s^p +\\
&+ \left<\int_{t_{i-1}}^{t_i} \left(J^{(i,j)}(s)\right)^T \nu(X_s) e_p \,dW_s^p, \, \int_{t_{i-1}}^{t_i}\,dW_s^m\right> \bigg|\mathcal{F}_{n, i-1}\Bigr].
\end{align*}
Using the martingale property and the independence of the components of Brownian motion, we have
\begin{align*}
&\sum_{i=1}^{A_n^t} \mathbb{E}\left[\chi_i^{n,j} \left(W_{t_i}^m - W_{t_{i-1}}^m\right)\big| \mathcal{F}_{n,i-1}\right]=\\
&=\frac{1}{\sqrt{\Delta_n}}\sum_{i=1}^{A_n^t} \mathbb{E}\left[ \int_{t_{i-1}}^{t_i} \left(J^{(i,j)}(s)\right)^T \nu(X_s) e_m\,ds \big| \mathcal{F}_{n, i-1}\right]=\\
&=\frac{1}{\sqrt{\Delta_n}}\sum_{i=1}^{A_n^t} \mathbb{E}\left[\int_{t_{i-1}}^{t_i} \left(J^{(i,j)}(s)\right)^T\left(\nu(X_s)-\nu(X_{i-1})\right) e_m\,ds \big|\mathcal{F}_{n, i-1}\right]+\\
&+\frac{1}{\sqrt{\Delta_n}}\sum_{i=1}^{A_n^t} \mathbb{E}\left[ \int_{t_{i-1}}^{t_i} \left(J^{(i,j)}(s)\right)^T \nu(X_{i-1}) e_m \, ds\big|\mathcal{F}_{n,i-1}\right].
\end{align*}
We denote by $\tilde{\nu}_m(x)$ the $m$-th column of matrix $\nu(x)$. Using Lemma \ref{mito} for function $\tilde{\nu}_m$ on interval $[t_{i-1}, s]$, we have
\begin{align}
&\sum_{i=1}^{A_n^t} \mathbb{E}\left[\chi_i^{n,j} \left(W_{t_i}^m - W_{t_{i-1}}^m\right)\big| \mathcal{F}_{n,i-1}\right]=\notag\\
&=\frac{1}{\sqrt{\Delta_n}}\sum_{i=1}^{A_n^t} \mathbb{E}\left[\left(J^{(i,j)}(s)\right)^T \left(\tilde{\nu}_m(X_s)-\tilde{\nu}_m (X_{i-1})\right) \,ds \big| \mathcal{F}_{n, i-1}\right]=\notag\\
&=\frac{1}{\sqrt{\Delta_n}}\sum_{i=1}^{A_n^t} \mathbb{E}\Bigl[\int_{t_{i-1}}^{t_i} \left(J^{(i,j)}(s)\right)^T\int_{t_{i-1}}^s \Bigl(\nabla \tilde{\nu}_m(X_u)\mu(X_u, \theta_0)+\notag\\
&+\frac{1}{2}\nabla_2 \tilde{\nu}_m(X_u)\Bigr)\,du\,ds \big|\mathcal{F}_{n, i-1}\Bigr]+ \label{48a1}\\
&+\frac{1}{\Delta_n}\sum_{i=1}^{A_n^t} \mathbb{E}\left[\int_{t_{i-1}}^{t_i} \left(J^{(i,j)}(s)\right)^T\int_{t_{i-1}}^s \nabla \tilde{\nu}_m(X_u)\nu(X_u)\,dW_u\,ds \big| \mathcal{F}_{n, i-1}\right].\label{48a2}
\end{align}
Next, we analyze the $L^1$-norm of \eqref{48a1}. There exists a constant $c_{8}$ such that
\begin{align*}
\mathbb{E}\Biggl[\Bigg|\frac{1}{\sqrt{\Delta_n}}\sum_{i=1}^{A_n^t} \mathbb{E}\Biggl[\int_{t_{i-1}}^{t_i}&\Bigl<J^{(i,j)}(s)\,\bigg| \,\int_{t_{i-1}}^s \Bigl(\nabla \tilde{\nu}_m(X_u)\mu(X_u, \theta_0)+\\
&+\frac{1}{2}\nabla_2 \tilde{\nu}_m(X_u)\Bigr)\,du\Bigr>\,ds\big|\mathcal{F}_{n,i-1}\Biggr]\Bigg|\Biggr]\leq \\
&\leq c_{8} \sqrt{\Delta_n}T \mathbb{E}[M],
\end{align*}
where $M  \coloneqq \sup_{0 \leq s \leq T} \norm{\int_0^s R^{(j)}(u, \theta)\,dW_u}_2$. Using Doob's maximal inequality for vector martingale, we have that \eqref{48a1} converges to 0 in $L^1$ norm, so it also converges to 0 in probability.\\
Let us denote $H_m(x)\coloneqq\nabla \tilde{\nu}_m(x)\nu(x)$. Using It\^o formula for function $F(x,y)=\left<x\,|\,y\right>$, we have
\allowdisplaybreaks
\begin{align*}
&\frac{1}{\Delta_n}\sum_{i=1}^{A_n^t} \mathbb{E}\left[\int_{t_{i-1}}^{t_i}\left< \int_{t_{i-1}}^s R^{(j)}(u, \theta)\,dW_u\,\bigg| \,\int_{t_{i-1}}^s H_m(X_u)\,dW_u\right>\,ds \big| \mathcal{F}_{n, i-1}\right]=\\
&=\frac{1}{\sqrt{\Delta_n}}\sum_{i=1}^{A_n^t} \mathbb{E}\Bigl[\int_{t_{i-1}}^{t_i}\int_{t_{i-1}}^s \left<\int_{t_{i-1}}^u H_m(X_v)\,dW_v\,\bigg| \, R^{(j)}(u, \theta)\,dW_u\right>\,ds \big| \mathcal{F}_{n, i-1}\Bigr]+\\
&+\frac{1}{\sqrt{\Delta_n}}\sum_{i=1}^{A_n^t} \mathbb{E}\Bigl[\int_{t_{i-1}}^{t_i} \int_{t_{i-1}}^s \left<\int_{t_{i-1}}^u R^{(j)}(v, \theta)\,dW_v\,\bigg| \, H_m(X_u)\,dW_u\right>\,ds\big| \mathcal{F}_{n, i-1}\Bigr]+\\
&+\frac{1}{\sqrt{\Delta_n}}\sum_{i=1}^{A_n^t}\sum_{p,r=1}^k  \mathbb{E}\Bigl[\int_{t_{i-1}}^{t_i} \int_{t_{i-1}}^s \left(R^{(j)}(u, \theta) \circ H_m(X_u)\right)_{pr}\,du\,ds\big|\mathcal{F}_{n, i-1}\Bigr].
\end{align*}
Using the well-known characterization of $L^1$ random variables, it is easy to prove that the integral sign and expectation can be interchanged so that the first two summands are zero. For the third one there is a constant $c_{9}$ such that
\begin{align*}
&\Bigg| \frac{1}{\sqrt{\Delta_n}}\sum_{i=1}^{A_n^t}\sum_{p,r=1}^k \mathbb{E}\Bigl[\int_{t_{i-1}}^{t_i} \int_{t_{i-1}}^s \left(R^{(j)}(u, \theta) \circ H_m(X_u)\right)_{pr}\,du\,ds\big|\mathcal{F}_{n, i-1}\Bigr]\Bigg|\leq\\
&\leq c_{9}T\sqrt{\Delta_n}.
\end{align*}
We conclude that expression \eqref{48a2} converges almost surely to 0, so it also converges to zero in probability. 

To prove \eqref{ppt5}, it is enough to show that for arbitrary $j$,  $ 1 \leq j, \leq d$
\begin{align*}
\sum_{i=1}^{A_n^t} \mathbb{E}\left[ \chi_i^{n,j} \left(N_{t_i}-N_{t_{i-1}}\right)\big|\mathcal{F}_{n,i-1}\right] \overset{\mathbb{P}}{\to} 0, \quad \forall N \in \mathcal{M}_b \left(W^{\perp}\right).
\end{align*}
In a similar way as before, we have
\allowdisplaybreaks
\begin{align*}
&\sum_{i=1}^{A_n^t} \mathbb{E}\left[ \chi_i^{n,j} \left(N_{t_i}-N_{t_{i-1}}\right)\big|\mathcal{F}_{n,i-1}\right]=\\
&=\frac{1}{\sqrt{\Delta_n}} \sum_{i=1}^{A_n^t} \mathbb{E}\left[\int_{t_{i-1}}^{t_i} \left<J^{(i,j)}(s, \theta)\,\bigg| \, \nu(X_s)\, dW_s\right>\cdot \int_{t_{i-1}}^{t_i}\, dN_s \big|\mathcal{F}_{n, i-1}\right]=\\
&=\frac{1}{\sqrt{\Delta_n}} \sum_{i=1}^{A_n^t} \sum_{p=1}^k \mathbb{E}\left[\int_{t_{i-1}}^{t_i} \int_{t_{i-1}}^s \left<J^{(i,j)}(u, \theta)\,\bigg| \,\nu(X_u) e_p\right>\,dW_u^p\,dN_s\big| \mathcal{F}_{n,i-1}\right]+\\
&+\frac{1}{\sqrt{\Delta_n}} \sum_{i=1}^{A_n^t} \sum_{p=1}^k \mathbb{E}\left[\int_{t_{i-1}}^{t_i} \int_{t_{i-1}}^s \,dN_u \left<J^{(i,j)}(s,\theta)\,\bigg| \,\nu(X_s) e_p\right>\,dW_s^p\big|\mathcal{F}_{n,i-1}\right]+\\
&+\frac{1}{\sqrt{\Delta_n}} \sum_{i=1}^{A_n^t} \sum_{p=1}^k \mathbb{E}\left[\left<\int_{t_{i-1}}^{t_i}\left<J^{(i,j)}(u,\theta)\bigg| \, \nu(X_u) e_p\right>\,dW_u^p, \int_{t_{i-1}}^{t_i}dN_u\right>\big| \mathcal{F}_{n, i-1} \right]=\\
&=\frac{1}{\sqrt{\Delta_n}} \sum_{i=1}^{A_n^t} \sum_{p=1}^k \mathbb{E}\left[\int_{t_{i-1}}^{t_i}\left<J^{(i,j)}(u,\theta)\,\big| \, \nu(X_u) e_p\right>\,d\left[W^p, N\right]_u\,\big| \mathcal{F}_{n, i-1} \right]=\\
&=0.
\end{align*}
We use the fact that the conditional expectation of the martingale difference is zero \cite[Theorem 2.2]{ry}. In the last equality we use the orthogonality of the process $N$ on the components of Brownian motion.

Next, we prove \eqref{ppt4}. Let $\epsilon>0$. Using Cauchy-Schwarz and Markov inequality  \cite[Theorem 1.6.4]{dur}, we have
\begin{align*}
\mathbb{E}\Big|\sum_{i=1}^n \mathbb{E}\left[\norm{\chi_i^n}_2^2 \cdot \mathbbm{1}_{\{\norm{\chi_i^n}_2> \epsilon\}}\big| \mathcal{F}_{n, i-1} \right]  \Big|
&= \sum_{i=1}^n \mathbb{E}\left[\norm{\chi_i^n}_2^4\right]^{\frac{1}{2}} \cdot \mathbb{P}\left(\norm{\chi_i^n}_2>\epsilon\right)^{\frac{1}{2}}\leq\\
&\leq \sum_{i=1}^n \mathbb{E}\left[\norm{\chi_i^n}_2^4\right]^{\frac{1}{2}} \left(\frac{\mathbb{E}\left[ \norm{\chi_i^n}_2^4\right]}{\epsilon^4}\right)^{\frac{1}{2}}=\\
&=\frac{1}{\epsilon^2}\sum_{i=1}^n \mathbb{E}\left[\norm{\chi_i^n}_2^4\right]=\\
&\leq \frac{d}{\epsilon^2}\sum_{i=1}^n  \mathbb{E}\left[\sum_{j=1}^d (\chi_i^{n,j})^4\right].
\end{align*}
To satisfy condition \eqref{ppt4}, it is sufficient to prove that
\begin{align*}
\lim_{n \rightarrow +\infty} \frac{1}{\epsilon^2} \sum_{i=1}^n \mathbb{E}\left[\left(\chi_i^{n,j}\right)^4\right]=0.
\end{align*}
If we denote by
\begin{align*}
H^{(i,j)}(s)\coloneqq \int_{t_{i-1}}^s \left<J^{(i,j)}(u)\,\bigg| \, \nu(X_u)\, dW_u\right>,
\end{align*}
there exist a constant $c_{10}$ such that
\begin{align}
&\frac{1}{\epsilon^2}\sum_{i=1}^n \mathbb{E}\left[\left(\chi_i^{n,j}\right)^4\right]=\notag\\
&=\frac{1}{\Delta_n^2 \epsilon^2}\sum_{i=1}^n \mathbb{E}\left[\left(H^{(i,j)}(t_i)\right)^4\right]=\notag\\
&=\frac{1}{\Delta^2\epsilon^2}\sum_{i=1}^n \mathbb{E}\left[4\int_{t_{i-1}}^{t_i}\left(H^{(i,j)}(s)\right)^3 \left(J^{(i,j)}(s)\right)^T \nu(X_s)\,dW_s\right]+\notag\\
&+\frac{1}{\Delta_n^2 \epsilon^2}\sum_{i=1}^n \mathbb{E}\left[6\int_{t_{i-1}}^{t_i}\left(H^{(i,j)}(s)\right)^2\,d\left<H^{(i,j)}\right>_s\right]=\notag\\
&=\frac{6}{\Delta_n^2\epsilon^2}\sum_{i=1}^n \sum_{p=1}^k \mathbb{E}\left[\int_{t_{i-1}}^{t_i} \left(H^{(i,j)}(s)\right)^2\left<J^{(i,j)}(s)\,| \, \nu(X_s) e_p\right>^2\,ds\right]\leq\notag\\
&\leq \frac{c_{10} }{\Delta_n^2 \epsilon^2} \sum_{i=1}^n \sum_{r=1}^k \mathbb{E}\left[\int_{t_{i-1}}^{t_i} \left(H^{(i,j)}(s)\right)^2 \cdot \left(J_r^{(i,j)}(s)\right)^2\,ds\right]\label{49}.
\end{align}
To analyze the expression under the integral sign, we apply It\^o formula for the function $F(x,y)=x^2y^2$. The independence of the components of Brownian motion is essential for the calculation of the quadratic covariance and variances.
\begin{align}
&\left(H^{(i,j)}\right)^2 \cdot \left(J_r^{(i,j)}\right)^2=\notag\\
&=2\int_{t_{i-1}}^s H^{(i,j)}(u) \left(J_r^{(i,j)}(u)\right)^2\,dH^{(i,j)}(u)+\notag\\
&+2\int_{t_{i-1}}^s \left(H^{(i,j)}(u)\right)^2 J_r^{(i,j)}(u)\,dJ_r^{(i,j)}(u)+\notag\\
&+\frac{1}{2}\int_{t_{i-1}}^s 2 \left(J_r^{(i,j)}(u)\right)^2 \, d\left<H^{(i,j)}, \, H^{(i,j)}\right>_u +\notag\\
&+\frac{1}{2}\cdot 2\int_{t_{i-1}}^s 4H^{(i,j)}(u) J_r^{(i,j)}(u)\,d\left<H^{(i,j)}, \, J_r^{(i,j)}\right>_u+\notag\\
&+\frac{1}{2}\int_{t_{i-1}}^s 2 \left(H^{(i,j)}\right)^2\,d\left<J_r^{(i,j)},\, J_r^{(i,j)}\right>_u=\notag\\
&=2\int_{t_{i-1}}^s H^{(i,j)}(u) \left(J_r^{(i,j)}(u)\right)^2\left(J^{(i,j)}(u)\right)^T\nu(X_u)\, dW_u+\notag\\
&+2\int_{t_{i-1}}^s \left(H^{(i,j)}(u)\right)^2 J_r^{(i,j)}(u)e_r^T R^{(i,j)}(u, \theta)\,dW_u+\notag\\
&+\int_{t_{i-1}}^s \left(J_r^{(i,j)}(u)\right)^2 \norm{\nu(X_u)^T J^{(i,j)}(u)}_2^2\,du+\notag\\
&+4\int_{t_{i-1}}^s H^{(i,j)}(u) J_r^{(i,j)}(u)\left<\left(J^{(i,j)}(u)\right)^T\nu(X_u)\,\bigg| \, e_r^T R^{(j)}(u, \theta)\right>\,du+\notag\\
&+\int_{t_{i-1}}^s \left(H^{(i,j)}(u)\right)^2 \norm{e_r^T R^{(j)}(u, \theta)}_2^2\,du\label{hj1}
\end{align}
Considering equality \eqref{hj1}, we conclude that
\begin{align*}
&\frac{1}{\epsilon^2}\sum_{i=1}^n \mathbb{E}\left[\left(\chi_i^{n,j}\right)^4\right]\leq \\
&\leq \frac{12 c_{10}}{\Delta_n^2 \epsilon^2}\sum_{i=1}^n \sum_{r=1}^k \mathbb{E}\left[\int_{t_{i-1}}^{t_i} \int_{t_{i-1}}^s \left(J_r^{(i,j)}(u)\right)^2 \norm{\nu(X_u)^T J^{(i,j)}(u)}_2^2\,du\,ds\right]+\\
&+\frac{48 c_{10}}{\Delta_n^2 \epsilon^2}\sum_{i=1}^n \sum_{r=1}^k \mathbb{E}\Bigl[\int_{t_{i-1}}^{t_i}\int_{t_{i-1}}^s H^{(i,j)}(u)J_r^{(i,j)}(u)\cdot \\
&\cdot \left<\left(J^{(i,j)}(u)\right)^T\nu(X_u)\,\bigg| \, e_r^T R^{(j)}(u, \theta)\right>\, du\, ds\Bigr] +\\
&+\frac{12 c_{10}}{\Delta_n^2 \epsilon^2}\sum_{i=1}^n \sum_{r=1}^k \mathbb{E}\left[\int_{t_{i-1}}^{t_i} \int_{t_{i-1}}^s \left(H^{(i,j)}(u)\right)^2 \norm{e_r^T R^{(j)}(u, \theta)}_2^2\,du\,ds\right].
\end{align*}
By a similar reasoning as before (using norm consistency, It\^o formula, Lemma \ref{iso} (\ref{miito}) and (\ref{viito})), there exist constants $c_{12}$ and $c_{14}$ such that
\begin{align*}
&\frac{12 c_{10}}{\Delta_n^2 \epsilon^2}\sum_{i=1}^n \sum_{r=1}^k \mathbb{E}\left[\int_{t_{i-1}}^{t_i} \int_{t_{i-1}}^s \left(J_r^{(i,j)}(u)\right)^2 \norm{\nu(X_u)^TJ^{(i,j)}(u)}_2^2\,du\,ds\right]\leq\\
&\leq \frac{12c_{11}}{\Delta_n^2 \epsilon^2}\sum_{i=1}^n \sum_{p,r=1}^k \int_{t_{i-1}}^{t_i} \int_{t_{i-1}}^s \int_{t_{i-1}}^u \Bigl[ \mathbb{E}\left[\left(J_p^{(i,j)}(v)\right)^2\right]+4 \mathbb{E}\left[J_p^{(i,j)}(v)J_r^{(i,j)}(v)\right]+\\&+\mathbb{E}\left[\left(J_r^{(i,j)}(v)\right)^2\right]\Bigr]\,dv\,du\,ds\leq c_{12}\Delta_n T,
\end{align*}
and
\begin{align*}
&\frac{12c_{10}}{\Delta_n^2\epsilon^2}\sum_{i=1}^n \sum_{r=1}^k \int_{t_{i-1}}^{t_i} \int_{t_{i-1}}^s \mathbb{E}\left[\left(H^{(i,j)}(u)\right)^2 \norm{e_r^T R^{(j)}(u, \theta)}_2^2\right]\,du\,ds \leq\\
&\leq \frac{c_{13}}{\Delta_n^2\epsilon^2} \sum_{i=1}^n \int_{t_{i-1}}^{t_i} \int_{t_{i-1}}^s \mathbb{E}\left[\left(H^{(i,j)}(u)\right)^2\right]\,du\,ds\leq c_{14}\Delta_nT.
\end{align*}
Using the previous two bounds, there is a constant $c_{16}$ such that
\begin{alignat*}{3}
&\Big|\frac{48 c_{10}}{\Delta_n^2 \epsilon^2}\sum_{i=1}^n \sum_{r=1}^k \mathbb{E}\Bigl[\int_{t_{i-1}}^{t_i}\int_{t_{i-1}}^s && H^{(i,j)}(u) J_r^{(i,j)}(u)\cdot \\
& &&\cdot \left<\left(J^{(i,j)}(u)\right)^T\nu(X_u)\,\bigg| \, e_r^T R^{(j)}(u, \theta)\right>\, du\, ds\Bigr]\Big|\leq &&\\
&\leq \frac{c_{15}}{\Delta_n^2\epsilon^2} \sum_{i=1}^{n} \sum_{r=1}^k \int_{t_{i-1}}^{t_i} \int_{t_{i-1}}^s \mathbb{E}\Bigl[&&\left(J_r^{(i,j)}(u)\right)^2  \norm{\left(J^{(i,j)}(u)\right)^T\nu(X_u)}_2^2\Bigr]^{\frac{1}{2}} \cdot \\
& &&\cdot \mathbb{E}\Bigl[ \left(H^{(i,j)}\right)^2 \norm{e_r^T R^{(j)}(u, \theta)}_2^2\Bigr]^{\frac{1}{2}}\,du\,ds \leq &&\\
&\leq c_{16}\Delta_n T. && &&
\end{alignat*}
Hence, expression in \eqref{49} is bounded above by $c_{17} \Delta_n T$ so it converges to zero when $n \to +\infty$.

Lastly, we show that assumption \eqref{ppt2} is fulfilled. For arbitrary and fixed $j , l$ such that $1 \leq j \leq l \leq d$ we consider
\begin{align*}
&\sum_{i=1}^{A_n^t} \left(\mathbb{E}\left[\chi_i^{n,j}\chi_i^{n,l}\big|\mathcal{F}_{n,i-1}\right]-\mathbb{E}\left[\chi_i^{n,j}\big|\mathcal{F}_{n,i-1}\right]\cdot \mathbb{E}\left[\chi_i^{n,l}\big|\mathcal{F}_{n,i-1}\right]\right)=\\
&=\frac{1}{\Delta_n}\sum_{i=1}^{A_n^t}\sum_{p,r=1}^k \mathbb{E}\Bigl[\int_{t_{i-1}}^{t_i} \left<J^{(i,j)}(s)\,| \, \nu(X_s) e_p\right>\, dW_s^p \cdot \\
&\cdot \int_{t_{i-1}}^{t_i} \left<J^{(i,l)}(s)\,| \, \nu(X_s) e_r\right>\, dW_s^r\big|\mathcal{F}_{n, i-1}\Bigr].
\end{align*}
Using the product formula and independence of the components of Brownian motion, we have
\begin{align}
&\sum_{i=1}^{A_n^t} \left(\mathbb{E}\left[\chi_i^{n,j}\chi_i^{n,l}\big|\mathcal{F}_{n,i-1}\right]-\mathbb{E}\left[\chi_i^{n,j}\big|\mathcal{F}_{n,i-1}\right]\cdot \mathbb{E}\left[\chi_i^{n,l}\big|\mathcal{F}_{n,i-1}\right]\right)=\notag\\
&=\frac{1}{\Delta_n}\sum_{i=1}^{A_n^t}\sum_{p=1}^k \mathbb{E}\left[\int_{t_{i-1}}^{t_i} \left<J^{(i,j)}(s)\,| \,\nu(X_s) e_p\right>\cdot \left<J^{(i,l)}(s)\,| \,\nu(X_s) e_p\right>\,ds\big|\mathcal{F}_{n, i-1}\right]\label{last}.
\end{align}
Using simple relations of linear algebra it results that \eqref{last} is equal to
\begin{align*}
\frac{1}{\Delta_n}\sum_{i=1}^{A_n^t}\sum_{p,r=1}^k\mathbb{E}\left[\int_{t_{i-1}}^{t_i}S_{pr}(X_s)J_r^{(i,j)}(s)J_p^{(i,l)}(s)\,ds \big|\mathcal{F}_{n, i-1}\right].
\end{align*}
Again, the main tool is It\^o formula and independence of the components of Brownian motion.
\begin{align}
&\frac{1}{\Delta_n}\sum_{i=1}^{A_n^t}\sum_{p,r=1}^k\mathbb{E}\left[\int_{t_{i-1}}^{t_i}S_{pr}(X_s)J_r^{(i,j)}(s)J_p^{(i,l)}(s)\,ds \big|\mathcal{F}_{n, i-1}\right]=\notag\\
&=\frac{1}{\Delta_n}\sum_{i=1}^{A_n^t} \sum_{p,r=1}^k \mathbb{E}\left[\int_{t_{i-1}}^{t_i}S_{pr}(X_s)\int_{t_{i-1}}^s J_r^{(i,j)}(u)e_p^TR^{(l)}(u,\theta)\,dW_u\,ds\big|\mathcal{F}_{n, i-1}\right] +\label{b1}\\
&+\frac{1}{\Delta_n}\sum_{i=1}^{A_n^t} \sum_{p,r=1}^k \mathbb{E}\left[\int_{t_{i-1}}^{t_i}S_{pr}(X_s)\int_{t_{i-1}}^s J_p^{(i,l)}(u)e_r^TR^{(j)}(u,\theta)\,dW_u\,ds\big|\mathcal{F}_{n, i-1}\right]+\label{b2}\\
&+\frac{1}{\Delta_n}\sum_{i=1}^{A_n^t} \sum_{p,r=1}^k \mathbb{E}\left[\int_{t_{i-1}}^{t_i}S_{pr}(X_s)\int_{t_{i-1}}^s \left<e_r^TR^{(j)}(u,\theta)| e_p^TR^{(l)}(u,\theta)\right>du ds\big|\mathcal{F}_{n, i-1}\right]\label{b3}
\end{align}
Since the matrix $S$ is symmetric, it is enough to consider one of the terms \eqref{b1} and \eqref{b2}. Let $\tilde{h}_{r,p}^{(i,j,l)}(u,\theta)\coloneqq J_r^{(i,j)}(u)e_p^T R^{(l)}(u,\theta)$. Using Jensen's inequality for conditional expectation, Cauchy-Schwarz inequality and some simple inequalities, and Lemma \ref{vito}, it can be shown that there is a constant $c_{18}$ such that
\begin{align*}
&\mathbb{E}\left[\left(\frac{1}{\Delta_n}\sum_{i=1}^{A_n^t}\sum_{p,r=1}^k \mathbb{E}\left[\int_{t_{i-1}}^{t_i}S_{pr}(X_s)\int_{t_{i-1}}^s \tilde{h}_{r,p}^{(i,j,l)}(u, \theta)\,dW_u\,ds \big| \mathcal{F}_{n, i-1}\right]\right)^2\right]=\\
&=\frac{1}{\Delta_n^2}\mathbb{E}\left[\sum_{i=1}^{A_n^t} \left( \mathbb{E}\left[\sum_{p,r=1}^k\int_{t_{i-1}}^{t_i}S_{pr}(X_s)\int_{t_{i-1}}^s \tilde{h}_{r,p}^{(i,j,l)}(u, \theta)\,dW_u\,ds\big|\mathcal{F}_{n, i-1}\right]\right)^2\right]+\\
&+\frac{1}{\Delta_n^2}\mathbb{E}\Bigl[\sum_{i,m=1, i \neq m}^{A_n^t}\left(\sum_{p,r=1}^k \mathbb{E}\left[\int_{t_{i-1}}^{t_i} S_{pr}(X_s)\int_{t_{i-1}}^s \tilde{h}_{r,p}^{(i,j,l)}(u, \theta)\,dWu\,ds \big|\mathcal{F}_{n, i-1}\right]\right)\cdot\\
&\cdot \left(\sum_{p,r=1}^k \mathbb{E}\left[\int_{t_{m-1}}^{t_m}S_{pr}(X_s)\int_{t_{m-1}}^s \tilde{h}_{r,p}^{(m,j,l)}(u, \theta)\,dW_u\,ds \big|\mathcal{F}_{n, m-1}\right]\right)\Bigr]\leq \\
&\leq c_{18}(T\Delta_n + T^2 \Delta_n),
\end{align*}
so \eqref{b1} converges to zero in $L^2$. Hence, it converges to zero in probability, too.\\
To analyze \eqref{b3} we introduce the following notation
\begin{align*}
D^{(i,j,l)}_t \coloneqq \sum_{p,r=1}^k \int_{t_{i-1}}^{t} S_{pr}(X_s)\int_{t_{i-1}}^s \left<e_r^T R^{(j)}(u, \theta)\,|\,e_p^T R^{(l)}(u,\theta)\right>\,du\,ds.
\end{align*}
Then, we have that \eqref{b3} equals
\begin{align}
\frac{1}{\Delta_n}\sum_{i=1}^{A_n^t} \mathbb{E}\left[D^{(i,j,l)}_{t_i}\big| \mathcal{F}_{n, i-1}\right]&=\frac{1}{\Delta_n}\sum_{i=1}^{A_n^t} \left(\mathbb{E}\left[D^{(i,j,l)}_{t_i}\big|\mathcal{F}_{n, i-1}\right]-D^{(i,j,l)}_{t_i}\right)\label{b3a}\\
&+\frac{1}{\Delta_n}\sum_{i=1}^{A_n^t} D^{(i,j,l)}_{t_i}.\label{b3b}
\end{align}
It can be shown that \eqref{b3a} converges to 0 in $L^2$ because there exists a constant $c_{19}$ such that
\begin{align*}
&\mathbb{E}\left[\left(\frac{1}{\Delta_n}\sum_{i=1}^{A_n^t}\left(\mathbb{E}\left[D^{(i,j,l)}_{t_i}|\mathcal{F}_{n,i-1}\right]-D^{(i,j,l)}_{t_i}\right)\right)^2\right]\leq c_{19} T \Delta_n.
\end{align*}
There is only left to show that \eqref{b3b} converges in probability to
\begin{align*}
C_{jl}(t)=\frac{1}{2}\sum_{p,r=1}^k \int_0^t S_{pr}(X_s) \left<e_r^T R^{(j)}(s, \theta)\,| \,e_p^T R^{(l)}(s, \theta)\right>\,ds.
\end{align*}
It is sufficient to show that for arbitrary  $p,r=1,2, \dots, k$ the following convergence holds
\begin{align*}
\frac{1}{\Delta_n}\sum_{i=1}^{A_n^t} \int_{t_{i-1}}^{t_i} S_{pr}(X_s)\int_{t_{i-1}}^s \left<e_r^T R^{(j)}(u, \theta)\,| \, e_p^T R^{(l)}(u, \theta)\right>\,du\,ds \overset{\mathbb{P}}{\to} \\\frac{1}{2}\int_0^t S_{pr}(X_s) \left<e_r^T R^{(j)}(s, \theta)\,|\,e_p^T R^{(l)}(s,\theta)\right>\,ds,
\end{align*}
when $n \to +\infty$.\\
Let us denote $m(u,s)(\omega)\coloneqq S_{pr}(X_s(\omega))\left<e_r^T R^{(j)}(u,\theta)\,| \, e_p^T R^{(l)}(u, \theta)\right>$. For fixed $\omega \in \Omega$, using assumptions (A3), (A4) and compactness of $E$ function  $m(u,s)(\omega)$ is bounded and continuous on $[0,T]\times [0,T]$. Hence, there exist $u_i^{*}(\omega),s_i^{*}(\omega) \in [t_{i-1}, t_i]$ such that $u_i^{*}(\omega)\leq s_i^{*}(\omega)$ and
\begin{align*}
\int_{t_{i-1}}^{t_i} \int_{t_{i-1}}^s m(u,s)\,du\,ds= \frac{\Delta_n^2}{2} m(u_i^{*}, s_i^{*}), \quad i=1,2,\dots, n.
\end{align*}
We have
\begin{align}
&\frac{1}{\Delta_n}\sum_{i=1}^{A_n^t}\int_{t_{i-1}}^{t_i}\int_{t_{i-1}}^s m(u,s)\,du\,ds-\frac{1}{2}\int_0^t m(s,s)\,ds=\notag\\
&=\frac{\Delta_n}{2} \sum_{i=1}^{A_n^t} m(u_i^{*}, u_i^{*})-\frac{1}{2}\int_0^t m(s,s)\,ds+\label{b3b1}\\
&+\frac{\Delta_n}{2}\sum_{i=1}^{A_n^t} \left(m(u_i^{*},s_i^{*})-m(u_i^{*}, u_i^{*})\right).\notag
\end{align}
Since function given by $(u,s) \mapsto m(u,s)$ is continuous, there exists $t^{*} \in \left[t_{A_n^t}, t\right]$ such that $\frac{1}{2}\int_{t_{A_n^t}}^t m(s,s)\,ds=\frac{1}{2}\left(t-t_{A_n^t}\right) m(t^{*}, t^{*})$. Hence,
\begin{align}
&\frac{\Delta_n}{2}\sum_{i=1}^{A_n^t} m(u_i^{*}, u_i^{*})-\frac{1}{2}\int_0^t m(s,s)\,ds=\notag\\
&=\frac{\Delta_n}{2}\sum_{i=1}^{A_n^t} m(u_i^{*}, u_i^{*})+\frac{1}{2}\left(t-t_{A_n^t}\right)m(t^*, t^*)-\frac{1}{2}\int_0^t m(s,s)\,ds-\notag\\
&- \frac{1}{2}\int_{t_{A_n^t}}^t m(s,s)\,ds.\label{m4}
\end{align}
The first two terms of \eqref{m4} forms Riemann integral sum so they converge almost surely to $\frac{1}{2}\int_0^t m(s,s)\,ds$. Boundedness of function $m$ assures that  $\frac{1}{2}\int_{A_n^t}^t m(s,s)\,ds$ converges almost surely to zero so \eqref{b3b1} converges almost surely to zero. \\
Let $\epsilon >0$. The function given by $t \mapsto S_{pr}(X_t)$ is continuous on $[0,T]$. Moreover, it is uniformly continuous so there exists $\delta>0$ such that for all $s,t \in [0,T]$, $|s-t|<\delta$ implies that $|S_{pr}(X_s)-S_{pr}(X_t)|<\epsilon$.
Since $\Delta_n$ tends to $0$, there exists $n_0 \in \mathbb{N}$ such that for every $n \geq n_0$, $\Delta_n <\delta$ holds.\\
Therefore, for $n\geq n_0$ we have that
\begin{align*}
&\Bigg|\frac{\Delta_n}{2}\sum_{i=1}^{A_n^t} \left(m(u_i^*,s_i^*)-m(u_i^*,u_i^*)\right)\Bigg| \leq \epsilon \frac{T}{2}.
\end{align*}
So \eqref{b3b} converges almost surely to $C_{jl}(t)$ so it converges in probability.

Finally, we proved that conditions of Theorem \ref{jac} are satisfied, hence the conclusion of theorem holds. We denote by $\pi_T$ the projection function $\pi_T : D([0,T], \mathbb{R}^d) \to \mathbb{R}^d$, defined by $\pi_T((X_t, t \in [0,T]))\coloneqq X_T$. Then by \cite[Theorem 12.5]{bill_cpm} it is a continuous function. Since we have the following relation
\begin{align*}
\frac{1}{\sqrt{\Delta_n}}\left(D\ell(\theta)-D\ell_n(\theta)\right)=V_n(\theta)+\pi_T\left(\left(\sum_{i=1}^{A_n^t}\chi_i^n\right)_{1\leq t\leq T}\right),
\end{align*}
the statement of Theorem \ref{main1} holds in the case of compact set $E$. 

In general, when $E \subseteq \mathbb{R}^k$ is open, there exists a set of open and bounded sets $(E_N)_{N \in \mathbb{N}}$ such that for every $N \in \mathbb{N}$, $Cl(E_N) \subset E_{N+1}$ and $E=\cup_{N=1}^{+\infty} E_N$. Without loss of generality, let $x_0 \in E_1$. There exists a sequence of $C^{\infty}(E)$-functions $\left(\phi_N\right)_{N\in \mathbb{N}}$ such that $\phi_N(x) \in [0,1]$ for all $x \in E$,  $\phi_N(x)=1$ on $Cl(E_N)$ and $\phi_N(x)=0$, on $Cl(E_{N+1})^C$\cite{ury}. Let us define $\mu^{(N)}(x,\theta)\coloneqq \phi_N(x)\mu(x,\theta)$. Let $\nu^{(N)}$ be continuous functions on $E$ such that $\nu^{(N)}(x) \coloneqq \nu(x)$ on $Cl(E_N)$ and $\nu^{(N)}(x)=K$ on $E \symbol{92} Cl(E_{N+1})$ where $K$ is a square root of some constant positive definite $k \times k$ matrix. Moreover, for $N \in \mathbb{N}$ we define $T_N \coloneqq \inf \{t\geq 0: X_t \in E_N^C\}$. Since $X$ is a continuous process, $(T_N)_{N \in \mathbb{N}}$ is an increasing sequence of stopping times and $T_N \nearrow +\infty$ a.s.\\
Let $N \in \mathbb{N}$ be fixed. Let process $X^{(N)} = (X_t^N; 0\leq t\leq T)$ be a diffusion process given as a strong solution of the following system of stochastic differential equations
\begin{align}\label{difN}
X_t^{(N)}=x_0+ \int_0^t \mu^{(N)}(X_s^{(N)},\theta_0)\,ds+\int_0^t \nu^{(N)}(X_s^{(N)})\, dW_s,\quad 0\leq t \leq T.
\end{align}
Under assumptions (A2) and (A4) functions $\mu^{(N)}(\cdot,\theta_0)$ and $\nu^{(N)}(\cdot)$ are bounded on $E$. They are also Lipschitz continuous. Using \cite[Corollary 5.1.2]{strvar} diffusion process \eqref{difN} exists and it is unique a.s. Let $\frac{1}{\sqrt{\Delta_n}} \left(D\ell^{(N)}(\theta)-D\ell_n^{(N)}(\theta)\right)$ be the term from the statement of the theorem for diffusion \eqref{difN}. Then, first part of the proof implies that $
\frac{1}{\sqrt{\Delta_n}} \left(D\ell^{(N)}(\theta)-D\ell_n^{(N)}(\theta)\right) \overset{st}{\Rightarrow}$ $ Y^{(N)}(\theta),\, n \to +\infty$ where $Y^{(N)}(\theta) \sim MN(0, \Sigma^{(N)}(\theta))$, and $\Sigma^{(N)}$ is a random matrix $\Sigma(\theta)$ that is applied on process $X^{(N)}$ and functions $\mu^{(N)}$ and $\nu^{(N)}$. Let us denote  $V_n(\theta)\coloneqq\frac{1}{\sqrt{\Delta_n}}\left(D\ell(\theta)-D\ell_n(\theta)\right)$ and $V_n^{(N)}(\theta)\coloneqq\frac{1}{\sqrt{\Delta_n}}\left(D\ell^{(N)}(\theta)-D\ell_n^{(N)}(\theta)\right)$. Let $f :  \mathbb{R}^d \to \mathbb{R}$ be a bounded and continuous function, and let $U$ be arbitrary bounded $\mathcal{F}_T$-measurable random variable. For almost all $\omega \in \Omega$ and $t \in \left[0, T^{(N)}\right]$, processes $X_t(\omega)$ and $X_t^{(N)}(\omega)$ are equal so we have:
\begin{align}
\Big|&\mathbb{E}\left[f(V_n(\theta))U\right]-\tilde{\mathbb{E}}\left[f(Y(\theta))U\right]  \Big|\leq \notag\\
&\Big|\mathbb{E}\left[f(V_n^{(N)}(\theta))U\mathbbm {1}_{\{T^{(N)}>T\}}\right]-\tilde{\mathbb{E}}\left[f(Y^{(N)}(\theta))U \mathbbm{1}_{\{T^{(N)}>T\}}\right]\Big| +\notag\\
&\Big|\mathbb{E}\left[f(V_n(\theta))U\mathbbm{1}_{\{T^{(N)} \leq T\}}\right]-\tilde{\mathbb{E}}\left[f(Y(\theta))U \mathbbm{1}_{\{T^{(N)} \leq T\}}\right]\Big|\label{part2}.
\end{align}
Using boundedness \eqref{part2} is bounded by $B\, \mathbb{P}\left(T^{(N)}\leq T\right)$ where $B$ is some positive constant. Using first part of the proof we have that
\begin{align*}
\overline{\lim\limits_{n}}\, \Big|&\mathbb{E}\left[f(V_n(\theta))U\right]-\tilde{\mathbb{E}}\left[f(Y(\theta))U\right]  \Big|\leq B\, \mathbb{P}\left(T^{(N)}\leq T\right).
\end{align*}
Letting $N \to +\infty$ we have
\begin{align*}
\overline{\lim\limits_{n}}\, \Big|&\mathbb{E}\left[f(V_n(\theta))U\right]-\tilde{\mathbb{E}}\left[f(Y(\theta))U\right]  \Big|=0
\end{align*}
that implies the statement of the theorem.
\end{proof}

\begin{proof} [Proof of Theorem \ref{main2}]
The idea of the proof is similar to the case when $k=1$ (see \cite[Theorem 5.4]{lubhuz}).
We denote by $Z_{n}(\theta)\equiv Z_{n}(\omega,\theta)\coloneqq \frac{1}{\sqrt{\Delta_n}}\left(D\ell(\theta)-D\ell_n(\theta)\right)$. Since we assumed that certain functions are smooth enough, using \cite[Lemma 4.1]{huzak2001}, we have that function  $(\omega,\theta) \mapsto Z_n(\omega, \theta)$ is $\mathcal{F}_T \otimes \mathcal{B}(\Theta)$, for all  $n \in \mathbb{N}$. Since MLE is a $\mathcal{F}_T$-measurable random vector \cite{huzak2001},  functions $\omega \mapsto Z_n(\hat{\theta})\equiv Z_n(\omega, \hat{\theta}(\omega))$ are $\mathcal{F}_T$-measurable.

Let $v \in \mathbb{R}^d$ be an arbitrary fixed vector and let $U$ be an arbitrary and almost surely bounded $\mathcal{F}_T$-measurable random variable. Let $B>0$ be a constant such that $|U|\leq B$ a.s. Using \cite[Lemma 4.3]{lubura}, it is sufficient to prove that
\begin{align*}
\lim_{n \to +\infty} \Big|\mathbb{E}\left[e^{i\left<v\,|\, Z_n(\hat{\theta})\right>}U\right]-\mathbb{E}\left[e^{-\frac{1}{2}\sum_{p,r=1}^d v_p v_r \Sigma_{pr}(\hat{\theta})}U\right]\Big|=0.
\end{align*}
For every $n \in \mathbb{N}$ we define functions: $
h_{n,1}(\omega, \theta)\coloneqq \cos \left(\left<v\,|\, Z_n(\omega,\theta)\right>\right)$,\\
$h_{n,2}(\omega, \theta)\coloneqq \sin \left(\left<v\,|\, Z_n(\omega,\theta)\right>\right)$ and
$h_{n,3}(\omega, \theta)\coloneqq e^{-\frac{1}{2}\sum_{p,r=1}^d v_pv_r \Sigma_{pr} (\omega, \theta)}.$
For $m=1, \dots, d$ we denote by $\partial_m h_{n,l}$ $m$-th partial derivative of $h_{n,l}$ with respect to $\theta_m$ for $l=1,2,3$.
Under assumption (A3), for every $\omega \in \Omega$, functions $\theta \mapsto h_{n,1} (\theta)\equiv h_{n,1}(\omega, \theta)$, $\theta \mapsto h_{n,2}(\theta) \equiv h_{n,2}(\omega, \theta)$ and $\theta \mapsto h_{n,3}\equiv h_{n,3}(\omega,\theta)$ are of class $C^1(\Theta)$. Using mean value theorem, we obtain \begin{align}\label{razlika_hl}
|h_{n,l}(\theta_1)-h_{n,l}(\theta_2)|\leq \left(\sum_{m=1}^d \sup_{\theta \in \Theta} |\partial_m h_{n,l}(\theta)|\right) \norm{\theta_2-\theta_1}_2.
\end{align}
It is easy to see that in order to bound $\partial_m h_{n,l}$, we have to bound $\partial_m Z_{n,j}$ (for $h_{n,1}$ and $h_{n,2}$) and $\partial_m \Sigma_{bc}$ (for $h_{n,3}$).
After a tedious calculation, we have
\allowdisplaybreaks
\begin{align}
\partial_m Z_{n,j}(\theta)
&=\frac{1}{\sqrt{\Delta_n}}\sum_{i=1}^n \int_{t_{i-1}}^{t_i} \bigl<S^{-1}(X_s)\partial_m\partial_j \mu(X_s,\theta)-\notag\\
&-S^{-1}(X_{i-1})\partial_m\partial_j \mu(X_{i-1},\theta)\,\big|\,\mu(X_s, \theta_0)\bigr>\,ds-\label{tip1}\\
&-\frac{1}{\sqrt{\Delta_n}}\sum_{i=1}^n \int_{t_{i-1}}^{t_i} \Bigl(\left<\partial_j\mu(X_s,\theta)\,\big|\,S^{-1}(X_s)\partial_m\mu(X_s,\theta)\right>-\notag\\
&-\left<\partial_j \mu(X_{i-1},\theta)\,\big|\,S^{-1}(X_{i-1})\partial_m\mu(X_{i-1},\theta)\right>\Bigr)\,ds-\label{tip2}\\
&-\frac{1}{\sqrt{\Delta_n}}\sum_{i=1}^n \int_{t_{i-1}}^{t_i} \Bigl(\left<\partial_m\partial_j\mu(X_s,\theta)\,\big|\,S^{-1}(X_s)\mu(X_s,\theta)\right>-\notag\\
&-\left<\partial_m\partial_j \mu(X_{i-1},\theta)\,\big|\,S^{-1}(X_{i-1})\mu(X_{i-1},\theta)\right> \Bigr)\,ds+\label{tip3}\\
&+\frac{1}{\sqrt{\Delta_n}}\sum_{i=1}^n \int_{t_{i-1}}^{t_i} \bigl<S^{-1}(X_s)\partial_m\partial_j \mu(X_s,\theta)-\notag\\
&-S^{-1}(X_{i-1})\partial_m\partial_j \mu(X_s,\theta)\,\big|\,\nu(X_s)\,dW_s\bigr>.\label{tip4}
\end{align}
Under assumption (A3), there exist $\partial_m \nabla g_j(x)$ for every $m, j=1,\dots,d$ so there exists
\begin{align}\label{dsig}
\partial_m\Sigma_{bc}(\theta)=\frac{1}{2}\int_0^T \sum_{p,r=1}^k S_{pr}(X_s) \partial_m &\bigl(\bigl<e_r^T \nabla g_b(X_s)\nu(X_s)\,| \notag\\
&\, e_p^T \nabla g_c(X_s)\nu(X_s)\bigr>\bigr)\,ds.
\end{align}
Assume for the moment that $E$ is a compact set, so all partial derivatives that appear in lines \eqref{tip1}-\eqref{dsig} are bounded functions on $E$.\\
Using \eqref{o_dt} in Theorem \ref{unif_b}, there exists a constant $K_{1,j,m}>0$ such that
\begin{align*}
\mathbb{E}\Bigl[\sup_{\theta \in \Theta}\big|\frac{1}{\sqrt{\Delta_n}}\sum_{i=1}^n &\int_{t_{i-1}}^{t_i} \bigl<S^{-1}(X_s)\partial_m\partial_j \mu(X_s,\theta)-\\
&-S^{-1}(X_{i-1})\partial_m\partial_j \mu(X_{i-1},\theta)\,\big|\,\mu(X_s, \theta_0)\bigr>\,ds\big|\Bigr]\leq K_{1,j,m}.
\end{align*}
Then, let us denote $\tilde{f}(x,\theta)\coloneqq\left<\partial_j \mu(x,\theta)\,\big|\,S^{-1}(x)\partial_m \mu(x, \theta)\right>$.\\ We construct a vector function $f=[\tilde{f},\cdots, \tilde{f}]^T$ and a vector function $a(x)\equiv[\frac{1}{k},\cdots, \frac{1}{k}]^T$. Using \eqref{o_dt} in Theorem \ref{unif_b}, there exists a constant $K_{2,j,m}>0$ such that
\begin{align*}
\mathbb{E}\Bigl[\sup_{\theta \in \Theta} \big|\frac{1}{\sqrt{\Delta_n}}\sum_{i=1}^n &\int_{t_{i-1}}^{t_i} \Bigl(\left<\partial_j\mu(X_s,\theta)\,\big|\,S^{-1}(X_s)\partial_m\mu(X_s,\theta)\right>-\\
&-\left<\partial_j \mu(X_{i-1},\theta)\,\big|\,S^{-1}(X_{i-1})\partial_m\mu(X_{i-1},\theta)\right>\Bigr)\,ds\big|\Bigr]\leq K_{2,j,m}.
\end{align*}
In the same manner, we conclude that there exists a constant $K_{3,j,m}>0$ such that
\begin{align*}
\mathbb{E}\Bigl[\sup_{\theta \in \Theta} \big| \frac{1}{\sqrt{\Delta_n}}\sum_{i=1}^n &\int_{t_{i-1}}^{t_i} \Bigl(\left<\partial_m\partial_j\mu(X_s,\theta)\,\big|\,S^{-1}(X_s)\mu(X_s,\theta)\right>-\\
&-\left<\partial_m\partial_j \mu(X_{i-1},\theta)\,\big|\,S^{-1}(X_{i-1})\mu(X_{i-1},\theta)\right> \Bigr)\,ds\big|\leq K_{3,j,m}.
\end{align*}
Using \eqref{o_dw} in Theorem \ref{unif_b}, there exists a constant $K_{4,j,m}>0$ such that
\begin{align*}
\mathbb{E}\Bigl[\sup_{\theta \in \Theta} \big|\frac{1}{\sqrt{\Delta_n}}\sum_{i=1}^n \int_{t_{i-1}}^{t_i} &\bigl<S^{-1}(X_s)\partial_m\partial_j \mu(X_s,\theta)-\\
&-S^{-1}(X_{i-1})\partial_m\partial_j \mu(X_s,\theta)\,\big|\,\nu(X_s)\,dW_s\bigr> \big| \Bigr] \leq K_{4,j,m}.
\end{align*}
Hence, for every $j, m=1,2, \dots, d$ we have that
\begin{align*}
\mathbb{E}\left[\sup_{\theta \in \Theta} \partial_m Z_{n,j}(\theta) \right]\leq K_{1,j,m}+K_{2,j,m}+K_{3,j,m} \eqqcolon K_{j,m}.
\end{align*}
Moreover, there exists a constant $L_{m,b,c}>0$ such that \begin{align*}
\mathbb{E}\left[\sup_{\theta \in \Theta} \partial_m\Sigma_{bc}(\theta)\right]\leq L_{m,b,c}.
\end{align*}
For $l=1,2,3$ let us denote $H_{n,l} \coloneqq \sum_{m=1}^d \sup_{\theta \in \Theta} |\partial_m h_{n,l}(\theta)|$ and $H_n \coloneqq H_{n,1}+H_{n,2}+H_{n,3}$. Let us also denote $F_n \coloneqq h_{n,1}+ih_{n,2}-h_{n,3}$. Using \eqref{razlika_hl}, we can easily see that for every $n \in \mathbb{N}$ we have
\begin{align*}
|F_{n}(\theta_1)-F_{n}(\theta_2)| &\leq |h_{n,1}(\theta_1)-h_{n,1}(\theta_2)|+|h_{n,2}(\theta_1)-h_{n,2}(\theta_2)|+\\
&+|h_{n,3}(\theta_1)-h_{n,3}(\theta_2)|\leq\\
&\leq (H_{n,1}+H_{n,2}+H_{n,3})\norm{\theta_1-\theta_2}_2=\\
&=H_n \norm{\theta_1-\theta_2}_2,
\end{align*}
where
\begin{align*}
\mathbb{E}\left[H_n\right]\leq 2\norm{v}_2 \sum_{m=1}^d \sum_{j=1}^d K_{j,m}+ \frac{1}{2}\norm{v}_2^2 \sum_{m=1}^d \sum_{b,c=1}^d L_{m,b,c}\coloneqq K
\end{align*}
and constant $K$ does not depend on $n$.\\
Using previously introduced notation, it is sufficient to prove that \begin{align*}
\lim_{n \to +\infty} \Big|\mathbb{E}\left[F_n(\hat{\theta})U\right] \Big|=0.
\end{align*}
Let $\varepsilon>0$ be arbitrary and fixed. We define $\delta\coloneqq \frac{\varepsilon}{2KB}$. Since $Cl(\Theta)$ is compact set, there exists its finite cover, i.e. there exist finitely many balls $K(\theta_l, \delta)$, $l=1, \dots, N$, such that $\theta_l \in \Theta$ and $\Theta \subseteq Cl(\Theta) \subseteq \cup_{l=1}^{N} K(\theta_l, \delta)$. We define a finite partition  $\{K_1, \dots, K_N\}$ of set $\Theta$ in the following way:
\begin{align*}
K_1 &\coloneqq K(\theta_1, \delta) \cap \Theta,\\
K_2 &\coloneqq K(\theta_2, \delta) \cap \Theta \cap K_1^C,\\
\vdots\\
K_N &\coloneqq K(\theta_N, \delta) \cap \Theta \cap K_1^C \dots\cap K_{N-1}^C.
\end{align*}
\begin{align}
\Big|\mathbb{E} \left[F_n(\hat{\theta})U\right]\Big|
&\leq\Big|\sum_{l=1}^N \mathbb{E}\left[(F_n(\hat{\theta})-F_n(\theta_l))U \mathbbm{1}_{\{\hat{\theta} \in K_l\}}\right]\Big|+\label{partition}\\
&+\Big|\sum_{l=1}^N \mathbb{E}\left[F_n(\theta_l)U\mathbbm{1}_{\{\hat{\theta} \in K_l\}}\right]\Big|\notag
\end{align}
On the event $\{\hat{\theta} \in K_l\}$ holds $\norm{\hat{\theta}-\theta_l}_2< \delta$ so for the first summand in  \eqref{partition} we have that
\begin{align*}
\Big|\sum_{l=1}^N \mathbb{E}\left[(F_n(\hat{\theta})-F_n(\theta_l))U \mathbbm{1}_{\{\hat{\theta} \in K_l\}}\right]\Big|&\leq \sum_{l=1}^N \mathbb{E}\left[|F_n(\hat{\theta})-F_n(\theta_l)||U|\mathbbm{1}_{\{\hat{\theta}\in K_l\}}\right]\leq\\
&\leq B \sum_{l=1}^N \mathbb{E}\left[H_n \norm{\hat{\theta}-\theta_l}_2\mathbbm{1}_{\{\hat{\theta}\in K_l\}}\right]<\\
&< B\delta \sum_{l=1}^N \mathbb{E}\left[H_n \mathbbm{1}_{\{\hat{\theta} \in K_l\}}\right]=BK\delta=\frac{\varepsilon}{2}.
\end{align*}
Since $\hat{\theta}$ is a $\mathcal{F}_T$-measurable random vector,  $U \mathbbm{1}_{\{\hat{\theta}\in K_l\}}$ is bounded a $\mathcal{F}_T$-measurable random variable for every $l=1,\dots, N$. For the second summand in \eqref{partition} Theorem \ref{main1} and \cite[Lemma 4.3]{lubura} imply
\begin{align*}
\lim_{n \to +\infty} \mathbb{E}\left[F_n(\theta_l)U \mathbbm{1}_{\{\hat{\theta}\in K_l\}}\right]=0.
\end{align*}
Moreover,
\begin{align*}
\lim_{n \to +\infty} \Big|\sum_{l=1}^N \mathbb{E}\left[F_n(\theta_l)U \mathbbm{1}_{\{\hat{\theta} \in K_l\}}\right] \Big|=0
\end{align*}
holds.\\
Finally, now we may choose $n_0=n_0(\varepsilon) \in \mathbb{N}$ such that for all $n \geq n_0$
\begin{align*}
\Big|\sum_{l=1}^N \mathbb{E}\left[F_n(\theta_l)U\mathbbm{1}_{\{\hat{\theta}\in K_l\}}\right]\Big|<\frac{\varepsilon}{2}.
\end{align*}
For all $n \geq n_0$ it follows that
\begin{align*}
\Big|\mathbb{E} \left[F_n(\hat{\theta})U\right]\Big| \leq \frac{\varepsilon}{2}+\frac{\varepsilon}{2}=\varepsilon.
\end{align*}
The statement of the theorem holds when $E$ is compact. Using the same construction with stopping times $\left(T_{N}\right)_{N \in \mathbb{N}}$ as in Theorem \ref{main1}, we achieve the general statement.
\end{proof}
\begin{proof} [Proof of Lemma \ref{konstr}]
Without loss of generality, we prove the assertion for $r=0$. The difference between $\ell_n$ and $\ell$ equals
\begin{align}
\ell_n(\theta)-\ell(\theta)&=\sum_{i=1}^n \int_{t_{i-1}}^{t_i} \bigl<S^{-1}(X_{i-1})\mu(X_{i-1}, \theta)-\notag\\
&-S^{-1}(X_s)\mu(X_s, \theta)\,\big|\,dX_s\bigr>-\label{diff1}\\
&-\frac{1}{2}\sum_{i=1}^n \int_{t_{i-1}}^{t_i} \Bigl(\left<\mu(X_{i-1},\theta)\,|\,S^{-1}(X_{i-1})\mu(X_{i-1},\theta)\right>-\notag\\
&-\left<\mu(X_s,\theta)\,|\, S^{-1}(X_s)\mu(X_s,\theta)\right>\Bigr)\,ds.\label{diff2}
\end{align}
Let us assume for the moment that $E$ is compact set. Relations \eqref{o_dt} and \eqref{o_dw} in Theorem \ref{unif_b} and \eqref{sde} imply that for \eqref{diff1} there exists a constant $C>0$ such that
\begin{align*}
\norm{\sup_{\theta \in \Theta}\Big|\sum_{i=1}^n\int_{t_i}^{t_{i+1}}\left<S^{-1}(X_{i-1})\mu(X_{i-1},\theta)-S^{-1}(X_s)\mu(X_s,\theta)\,\big|\,dX_s\right>\Big|}_{L^2}
\end{align*}
is bounded above by $C \sqrt{\Delta_n}$.
Constructing a vector function in the same way as in the proof of Theorem \ref{main2}, we may use Theorem \ref{unif_b} again for \eqref{diff2}. Finally, there exists a constant $C_X$ (which depends on process $X$) such that
\begin{align*}
\norm{\sup_{\theta \in \Theta}|\ell_n(\theta)-\ell(\theta)|}_{L^2} \leq C_X \sqrt{\Delta_n}.
\end{align*}
In the case when $E$ is an open set or functions that appear in calculations are not bounded, the same construction from proof of Theorem \ref{main1} has been imposed. For an open set $E$ there exists a sequence of open and bounded sets $(E_N)_{N \in \mathbb{N}}$
such that for all $N \in \mathbb{N}$,  $Cl(E_N) \subset E_{N+1}$,  $E=\cup_{N=1}^{+\infty} E_N$, and $x_0 \in E_1$. Also, there exists a sequence of $C^{\infty}$-functions $\left(\phi_N\right)_{N\in \mathbb{N}}$ such that $\phi_N(x) \in [0,1]$ for all $x \in E$, and $\phi_N(x)=1$ on $Cl(E_N)$ and $\phi_N(x)=0$ on $Cl(E_{N+1})^C$.\\
As before, we define functions $\mu^{(N)}(x,\theta)\coloneqq \phi_N(x)\mu(x,\theta)$ and $\nu^{(N)}(x) \coloneqq \nu(x)$ on $Cl(E_N)$ and $\nu^{(N)}(x)=K$ on $E \symbol{92} Cl(E_{N+1})$ where $K$ is a square root of some constant positive definite $k \times k$ matrix. Also, we define a sequence of stopping times $\left(T_N\right)_{N \in \mathbb{N}}$ and observe the diffusion process $X^{(N)}$ that is connected to $T_N$.\\
By the same reasoning as in the proof of Theorem \ref{main1}, the diffusion process \eqref{difN} exists and it is unique a.s.
Let us denote the diffusion matrix of $X^{(N)}$ by $S^{(N)}(x)\coloneqq \nu^{(N)}(x)\nu^{(N)}(x)^T$. Under assumptions (A2-4) function $(x,\theta)\mapsto (S^{(N)})^{-1}(x)\mu^{(N)}(x, \theta)$ satisfies the assumptions of Theorem \ref{unif_b}. Using first part of this proof, it follows that
\begin{align*}
\norm{\sup_{\theta \in \Theta}|\ell_n^{(N)}(\theta)-\ell^{(N)}(\theta)|}_{L^2}\leq C_{X^{(N)}} \sqrt{\Delta_n},
\end{align*}
where constant $C_{X^{(N)}}$ depends on process $\left(X_t^{(N)}\right)_{0\leq t\leq T}$.\\
Let $P_{x_0}^{(N)}$ be a distribution of solution \eqref{difN}, and $P_{x_0}$ a distribution of solution \eqref{sde}. It follows that $P_{x_0}^{(N)}(\cdot)=P_{x_0}(\cdot)$ on $\sigma$-algebra $\sigma\left(X_{s \wedge T_N}: s \geq 0\right)$ \cite[Corollary 10.1.2]{strvar}.
Hence,
\allowdisplaybreaks
\begin{align*}
&\mathbb{P} \left(\sup_{\theta \in \Theta} |\ell_n(\theta)-\ell(\theta)|>A\sqrt{\Delta_n}\right)\leq\\
&\leq \mathbb{P}\left(\{\sup_{\theta \in \Theta} |\ell_n(\theta)-\ell(\theta)|>A\sqrt{\Delta_n}, \, T_N \leq T\}\right)+\\
&+\mathbb{P} \left(\sup_{\theta \in \Theta} |\ell_n^{(N)}(\theta)-\ell^{(N)}(\theta)|>A\sqrt{\Delta_n}\right)\leq\\
&\leq \mathbb{P}\left(T_N\leq T\right) + \frac{1}{A\sqrt{\Delta_n}}\mathbb{E}\left[|\sup_{\theta \in \Theta} |\ell_n^{(N)}(\theta)-\ell^{(N)}(\theta)|\right]\leq \\
&\leq \mathbb{P}\left(T_N\leq T\right) + \frac{1}{A\sqrt{\Delta_n}} \norm{\sup_{\theta \in \Theta} |\ell_n^{(N)}(\theta)-\ell^{(N)}(\theta)|}_{L^2}\leq\\
&=\mathbb{P}\left(T_N\leq T\right)+\frac{1}{A}C_{X^{(N)}}.
\end{align*}
Let first $n \to +\infty$, then $A \to +\infty $ and finally $N \to +\infty$. We conclude that
\begin{align*}
\lim_{A \to +\infty}\lim_{n \to +\infty}\mathbb{P}\left(\sup_{\theta \in \Theta} \frac{|\ell_n(\theta)-\ell(\theta)|}{\sqrt{\Delta_n}}>A\right)=0.
\end{align*}
\end{proof}
\begin{lemma}
Let $\Theta$ be a convex and relatively compact set. Assume (A1-6). Then
\begin{align*}
\int_0^1 D^2\ell_n(\hat{\theta}+(\overline{\theta}_n-\hat{\theta})z)\,dz \cdot \frac{1}{\sqrt{\Delta_n}} \left(\overline{\theta}_n-\hat{\theta}\right) \overset{st}{\Rightarrow} Y(\hat{\theta}),
\end{align*}
where $Y(\hat{\theta}) \sim MN(0,\Sigma(\hat{\theta}))$.
\end{lemma}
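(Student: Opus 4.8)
The plan is to reduce the statement directly to Theorem~\ref{main2} by means of the mean value identity recorded just after assumption (A6). First I would recall that, by (A6), $D\ell(\hat{\theta})=\mathbf{0}_d$, so that on the event $\{D\ell_n(\overline{\theta}_n)=0\}$ the Taylor expansion of $D\ell_n$ around $\hat{\theta}$ reads
\begin{align*}
\int_0^1 D^2\ell_n(\hat{\theta}+(\overline{\theta}_n-\hat{\theta})z)\,dz\cdot\frac{1}{\sqrt{\Delta_n}}\left(\overline{\theta}_n-\hat{\theta}\right)=\frac{1}{\sqrt{\Delta_n}}\left(D\ell(\hat{\theta})-D\ell_n(\hat{\theta})\right).
\end{align*}
By Theorem~\ref{ex_amle}(i) we have $\mathbb{P}\left(D\ell_n(\overline{\theta}_n)=0\right)\to 1$, so the left-hand side above agrees with the right-hand side on a sequence of events whose probability tends to one.

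Next I would apply Theorem~\ref{main2}, whose hypotheses --- namely (A1-6) together with $\Theta$ convex and relatively compact --- are precisely those assumed here, to conclude that $\frac{1}{\sqrt{\Delta_n}}\left(D\ell(\hat{\theta})-D\ell_n(\hat{\theta})\right)\overset{st}{\Rightarrow}Y(\hat{\theta})$ with $Y(\hat{\theta})\sim MN(0,\Sigma(\hat{\theta}))$.

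Finally I would invoke the elementary fact that stable convergence in law is unaffected by modifying the approximating sequence on events of vanishing probability: if $Z_n\overset{st}{\Rightarrow}Z$ and $\mathbb{P}(Z_n'\neq Z_n)\to 0$, then for every bounded continuous $f$ and every bounded random variable $Y$,
\begin{align*}
\left|\mathbb{E}\left[Yf(Z_n')\right]-\mathbb{E}\left[Yf(Z_n)\right]\right| &=\left|\mathbb{E}\left[Y\left(f(Z_n')-f(Z_n)\right)\mathbbm{1}_{\{Z_n'\neq Z_n\}}\right]\right| \\
&\le 2\left(\sup|Y|\right)\left(\sup|f|\right)\mathbb{P}(Z_n'\neq Z_n)\to 0,
\end{align*}
so that $Z_n'\overset{st}{\Rightarrow}Z$ as well. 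Taking $Z_n=\frac{1}{\sqrt{\Delta_n}}(D\ell(\hat{\theta})-D\ell_n(\hat{\theta}))$ and $Z_n'$ equal to the integral expression in the statement (so that $\{Z_n'\neq Z_n\}\subseteq\{D\ell_n(\overline{\theta}_n)\neq 0\}$) then yields the claim.

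Since the lemma is essentially a corollary of Theorems~\ref{main2} and~\ref{ex_amle}, I do not expect a genuine obstacle. The only points deserving a line of care are that $\int_0^1 D^2\ell_n(\hat{\theta}+(\overline{\theta}_n-\hat{\theta})z)\,dz$ is a well-defined $\mathcal{F}_T$-measurable random matrix --- which follows from $\ell_n\in C^2(\Theta)$ under (A3), from the $\mathcal{F}_T$-measurability of $\hat{\theta}$ and $\overline{\theta}_n$, and from the convexity of $\Theta$, which guarantees that the segment $\hat{\theta}+(\overline{\theta}_n-\hat{\theta})z$ stays in $\Theta$ for $z\in[0,1]$ so the integrand is defined --- and that the mean value identity is used pathwise on $\{D\ell_n(\overline{\theta}_n)=0\}$ exactly as in the passage preceding Theorem~\ref{ex_amle}.
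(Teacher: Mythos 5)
Your proposal is correct and follows exactly the route the paper intends: the mean value identity recorded after (A6) turns the left-hand side into $\frac{1}{\sqrt{\Delta_n}}(D\ell(\hat{\theta})-D\ell_n(\hat{\theta}))$ on the event $\{D\ell_n(\overline{\theta}_n)=0\}$, whose probability tends to one by Theorem \ref{ex_amle}(i), and Theorem \ref{main2} plus the stability of $\overset{st}{\Rightarrow}$ under modifications on vanishing-probability events finishes the argument. The paper defers the written proof to the one-dimensional reference \cite{lubhuz}, but that proof is the same reduction you give.
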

\begin{lemma}
Let $\Theta$ be a convex and relatively compact set. Assume (A1-6). Then
\begin{align*}
\sup_{z \in [0,1]} \big| D^2\ell(\hat{\theta})-D^2\ell(\hat{\theta}+z(\overline{\theta}_n-\hat{\theta}))\big|\overset{\mathbb{P}}{\to} 0.
\end{align*}
\end{lemma}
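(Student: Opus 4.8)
The plan is to combine the $C^2$-regularity of $\ell$ with the consistency of the AMLE. First I would note that, by Theorem~\ref{tm_nepr} (whose hypotheses (A1) and (A4-5) are contained in (A1-6)), for every $\omega$ the map $\theta\mapsto D^2\ell(\theta)$ is continuous on the open set $\Theta$, and that $\hat{\theta}(\omega)\in\Theta$ by (A6). By Theorem~\ref{ex_amle}(ii) we have $\overline{\theta}_n\overset{\mathbb{P}}{\to}\hat{\theta}$, and since $\Theta$ is convex with $\hat{\theta},\overline{\theta}_n\in\Theta$, the whole segment $\{\hat{\theta}+z(\overline{\theta}_n-\hat{\theta}):z\in[0,1]\}$ lies in $\Theta$, while
\[
\sup_{z\in[0,1]}\norm{\bigl(\hat{\theta}+z(\overline{\theta}_n-\hat{\theta})\bigr)-\hat{\theta}}_2=\norm{\overline{\theta}_n-\hat{\theta}}_2\overset{\mathbb{P}}{\to}0.
\]

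Next I would deduce the statement through the standard subsequence criterion for convergence in probability. Given an arbitrary subsequence, extract a further subsequence $(n_l)_l$ along which $\overline{\theta}_{n_l}\to\hat{\theta}$ almost surely. Fix $\omega$ in the corresponding probability-one event and let $\varepsilon>0$. Continuity of $\theta\mapsto D^2\ell(\theta)$ at $\hat{\theta}$ yields $\delta=\delta(\omega,\varepsilon)>0$ such that $\norm{\theta-\hat{\theta}}_2<\delta$ implies $\big|D^2\ell(\theta)-D^2\ell(\hat{\theta})\big|<\varepsilon$; for all large $l$ one has $\norm{\overline{\theta}_{n_l}-\hat{\theta}}_2<\delta$, hence $\norm{(\hat{\theta}+z(\overline{\theta}_{n_l}-\hat{\theta}))-\hat{\theta}}_2<\delta$ for every $z\in[0,1]$, and therefore $\sup_{z\in[0,1]}\big|D^2\ell(\hat{\theta})-D^2\ell(\hat{\theta}+z(\overline{\theta}_{n_l}-\hat{\theta}))\big|\le\varepsilon$. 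Thus this supremum tends to $0$ almost surely along $(n_l)_l$; since every subsequence admits such a further subsequence, the original sequence converges to $0$ in probability.

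I expect no serious obstacle here; the only delicate point is that the modulus of continuity $\delta(\omega,\varepsilon)$ is random, because $\hat{\theta}$ is, so the continuous mapping theorem cannot be invoked verbatim, and the subsequence argument is what circumvents this. Should an explicit estimate be preferred, one may instead observe that $\{X_s:s\in[0,T]\}$ is almost surely a compact subset of $E$ and that the coefficient functions entering the formula for $D^2\ell$ in Theorem~\ref{tm_nepr} are continuous on $E\times Cl(\Theta)$, hence uniformly continuous there; this provides a random modulus of continuity $\rho_\omega$ for $\theta\mapsto D^2\ell(\theta)$ on $\Theta$ with $\rho_\omega(h)\to0$ as $h\to0$, so that $\sup_{z\in[0,1]}\big|D^2\ell(\hat{\theta})-D^2\ell(\hat{\theta}+z(\overline{\theta}_n-\hat{\theta}))\big|\le\rho_\omega(\norm{\overline{\theta}_n-\hat{\theta}}_2)$, and the right-hand side tends to $0$ in probability by splitting according to whether $\rho_\omega$ is small on a fixed deterministic ball around $0$.
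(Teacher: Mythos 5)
Your main argument is correct and is essentially the intended proof: the paper itself gives no details here (it defers to the one-dimensional case in \cite{lubhuz}), and the standard route is exactly what you do --- a.s.\ continuity of $\theta\mapsto D^2\ell(\theta)$ on the convex set $\Theta$ from Theorem~\ref{tm_nepr}, consistency $\overline{\theta}_n\overset{\mathbb{P}}{\to}\hat{\theta}$ from Theorem~\ref{ex_amle}(ii), and the subsequence criterion to handle the randomness of both $\hat{\theta}$ and the modulus of continuity. One caveat: your optional aside does not work as stated, because $D^2\ell$ contains the It\^o integral $\int_0^T\bigl<S^{-1}(X_s)\frac{\partial^2}{\partial\theta_i\partial\theta_j}\mu(X_s,\theta)\,\big|\,\nu(X_s)\,dW_s\bigr>$, and uniform continuity of the integrand in $\theta$ on the compact $\{X_s:s\in[0,T]\}\times Cl(\Theta)$ does not transfer pathwise to a modulus of continuity of the stochastic integral in $\theta$; obtaining a continuous version with such control is precisely what the Fourier-series construction of Theorem~\ref{nepr} is for, so you should rely on that (as your main argument implicitly does) rather than on the raw integrand.
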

\begin{lemma}\label{last_l}
Let $\Theta$ be a convex and relatively compact set. Assume (A1-6). Then
\begin{align*}
D^2\ell(\hat{\theta})\frac{1}{\sqrt{\Delta_n}}\left(\overline{\theta}_n-\hat{\theta}\right) \overset{st}{\Rightarrow} MN(0, \Sigma(\hat{\theta})).
\end{align*}
\end{lemma}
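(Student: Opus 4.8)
The plan is to obtain this statement as a Slutsky‑type consequence of the two lemmas immediately preceding it, of Lemma \ref{konstr}, and of Theorem \ref{ex_amle}(iv). Write
\[ v_n \coloneqq \frac{1}{\sqrt{\Delta_n}}\left(\overline{\theta}_n-\hat{\theta}\right), \qquad A_n \coloneqq \int_0^1 D^2\ell_n\bigl(\hat{\theta}+z(\overline{\theta}_n-\hat{\theta})\bigr)\,dz . \]
The first of the two preceding lemmas asserts exactly that $A_n v_n \overset{st}{\Rightarrow} Y(\hat{\theta}) \sim MN(0,\Sigma(\hat{\theta}))$. Since
\[ D^2\ell(\hat{\theta})\, v_n = A_n v_n + \bigl(D^2\ell(\hat{\theta}) - A_n\bigr) v_n , \]
and stable convergence is not destroyed by adding a term that converges to $0$ in probability (stable convergence entails joint convergence with every bounded $\mathcal{F}_T$‑measurable random variable, so the usual Slutsky argument applies after approximating bounded continuous test functions by uniformly continuous ones), it is enough to prove that $\bigl(D^2\ell(\hat{\theta}) - A_n\bigr) v_n \overset{\mathbb{P}}{\to} \mathbf{0}_d$.

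For this, estimate $\norm{\bigl(D^2\ell(\hat{\theta}) - A_n\bigr) v_n}_2 \leq \norm{D^2\ell(\hat{\theta}) - A_n}_F \, \norm{v_n}_2$ by Frobenius‑norm consistency. The factor $\norm{v_n}_2$ is bounded in probability by Theorem \ref{ex_amle}(iv), so it suffices to show $\norm{D^2\ell(\hat{\theta}) - A_n}_F \overset{\mathbb{P}}{\to} 0$. Writing $D^2\ell(\hat{\theta}) - A_n = \int_0^1 \bigl(D^2\ell(\hat{\theta}) - D^2\ell_n(\hat{\theta}+z(\overline{\theta}_n-\hat{\theta}))\bigr)\,dz$, moving the norm inside the integral and inserting $D^2\ell(\hat{\theta}+z(\overline{\theta}_n-\hat{\theta}))$ gives
\[ \norm{D^2\ell(\hat{\theta}) - A_n}_F \leq \sup_{z\in[0,1]} \norm{D^2\ell(\hat{\theta}) - D^2\ell(\hat{\theta}+z(\overline{\theta}_n-\hat{\theta}))}_F + \sup_{\theta\in\Theta} \norm{D^2\ell(\theta) - D^2\ell_n(\theta)}_F , \]
where in the last supremum we used that $\hat{\theta}+z(\overline{\theta}_n-\hat{\theta}) \in \Theta$ for every $z\in[0,1]$, which holds since $\hat{\theta},\overline{\theta}_n\in\Theta$ and $\Theta$ is convex. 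The first term on the right tends to $0$ in probability by the second preceding lemma; the second term is $O_{\mathbb{P}}(\sqrt{\Delta_n})$ by Lemma \ref{konstr} with $r=2$ (all norms on $\mathbb{R}^{d\times d}$ being equivalent), hence it tends to $0$ in probability as $\Delta_n\to 0$. Therefore $\norm{D^2\ell(\hat{\theta}) - A_n}_F \overset{\mathbb{P}}{\to} 0$, and consequently $\bigl(D^2\ell(\hat{\theta}) - A_n\bigr) v_n \overset{\mathbb{P}}{\to} \mathbf{0}_d$.

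Combining the two steps, $\bigl(D^2\ell(\hat{\theta}) - A_n\bigr) v_n \overset{\mathbb{P}}{\to} \mathbf{0}_d$ while $A_n v_n \overset{st}{\Rightarrow} Y(\hat{\theta})$, so by the robustness of stable convergence under perturbations vanishing in probability,
\[ D^2\ell(\hat{\theta})\, \frac{1}{\sqrt{\Delta_n}}\left(\overline{\theta}_n-\hat{\theta}\right) = A_n v_n + \bigl(D^2\ell(\hat{\theta}) - A_n\bigr) v_n \overset{st}{\Rightarrow} MN(0,\Sigma(\hat{\theta})) . \]
Every ingredient being already in hand, the argument is essentially bookkeeping; the only points that deserve to be stated explicitly are the robustness of stable convergence under $o_{\mathbb{P}}(1)$ perturbations and the one‑line observation that the interpolation segment stays inside $\Theta$ (by convexity), so that Lemma \ref{konstr} may be invoked at the random argument $\hat{\theta}+z(\overline{\theta}_n-\hat{\theta})$. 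I do not anticipate a genuine obstacle.
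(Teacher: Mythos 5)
Your proof is correct and follows exactly the route the paper intends: the paper does not spell out this proof (it defers to the one-dimensional case in \cite{lubhuz}), but the placement of the two preceding lemmas makes clear that the intended argument is precisely your decomposition $D^2\ell(\hat{\theta})v_n = A_n v_n + (D^2\ell(\hat{\theta})-A_n)v_n$, with the remainder controlled by Lemma \ref{konstr}, the uniform continuity lemma, and Theorem \ref{ex_amle}(iv), and the conclusion obtained from the stability of stable convergence under $o_{\mathbb{P}}(1)$ perturbations.
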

The last three lemmas are the same as in the case of a one-dimensional diffusion \cite{lubhuz} and so are their proofs.
\begin{proof}[Proof of Theorem \ref{the_core}]
Since $D^2\ell(\hat{\theta})$ is a negatively definite matrix (A6), it is a regular matrix, so its inverse exists. $\left(D^2\ell(\hat{\theta})\right)^{-1}$ is also a symmetric matrix. Finally, the desired result follows from Lemma \ref{last_l}.
\end{proof}

\section{Example and simulations}
The stochastic model that we used for simulations is the Heston model analyzed in \cite{hes}. The model is used for the  analysis of financial data and is given by
\begin{align}\label{ex}
dY_t&=(a-bY_t)\,dt+\sigma_1 \sqrt{Y_t}\,dW_t^1\\\notag
dX_t&=(\alpha-\beta Y_t)\,dt+\sigma_2 \sqrt{Y_t}\left(\rho\,dW_t^1+\sqrt{1-\rho^2}\,dW_t^2\right),
\end{align}
where $a>0$, $b, \alpha, \beta \in \mathbb{R}$, $\sigma_1>0$, $\sigma_2>0$, $\rho \in \left<-1,1\right>$ and $\left(W_t^1, W_t^2\right)_{t \geq 0}$ is a two-dimensional standard Wiener process.
Although this model does not satisfy the assumption of uniform ellipticity, it is shown in \cite[Proposition 2.1]{hes} that there exists a pathwise unique strong solution of \eqref{ex} for $t \geq 0$, and that the log-likelihood function can be written using results from \cite[Section 7, p. 296]{lipsh}, as was done in Section 3 of \cite{hes}.
For simulation purposes, a version of Theorem \ref{the_core} is used in which $\Sigma(\hat{\theta})$ and $D^2 \ell(\hat{\theta})$ are replaced by $\Sigma_n(\overline{\theta}_n)$ and $D^2 \ell_n(\overline{\theta}_n)$, respectively. Matrix $\Sigma_n(\overline{\theta}_n)$ whose elements are given by
\begin{align*}
    \Sigma_n({\theta})_{jl}=\frac{\Delta_n}{2}\sum_{i=1}^n \sum_{p,r=1}^k S_{pr}(X_{i-1})  &\bigl<e_r^T \nabla g_j(X_{i-1})\nu(X_{i-1}) \,\big|\\
    &e_p^T \nabla g_l(X_{i-1})\nu(X_{i-1})\bigr>
\end{align*}
can be understood as the discretized version of matrix $\Sigma(\theta)$. Since the drift function in \eqref{ex} is linear in $\theta$, matrices $\Sigma(\theta)$ and $\Sigma_n(\theta)$ do not depend on $\theta$. The proof of such a version of Theorem \ref{the_core} is similar to the proofs of Lemma 5.9 and 5.11 in \cite{lubhuz}.\\
MLE $\hat{\theta}$ of the process in \eqref{ex} is explicitly given by
\begin{align}\label{mle_ex}
    \begin{bmatrix}
    \hat{a} \\ \hat{b} \\ \hat{\alpha} \\ \hat{\beta}
    \end{bmatrix}
    =\frac{1}{\int_0^T Y_s \,ds \int_0^T \frac{ds}{Y_s}-T^2}\begin{bmatrix}
    \int_0^T Y_s\,ds \int_0^T \frac{dY_s}{Y_s}-T(Y_T-y_0)\\
    T\int_0^T \frac{dY_s}{Y_s}-(Y_T-y_0)\int_0^T \frac{ds}{Y_s}\\
    \int_0^T Y_s\,ds \int_0^T \frac{dX_s}{Y_s}-T(X_T-x_0)\\
    T\int_0^T \frac{dX_s}{Y_s}-(X_T-x_0)\int_0^T \frac{ds}{Y_s}
    \end{bmatrix}
\end{align}
if $\int_0^T Y_s\,ds \int_0^T ds/Y_s > T^2$. This condition is satisfied if the parameters of the model are such that $a \in \left[\sigma_1^2/2, \, +\infty\right>$, $b \in \mathbb{R}$, $\sigma_1 >0$ and $y_0>0$. Moreover, MLE in \eqref{mle_ex} is unique when $\alpha, \beta$ and $x_0$ are real numbers, $\sigma_2>0$ and $\rho \in \left<-1,1\right>$\cite{hes}.\\
We define the vector function $\mu(x,y,a,b, \alpha, \beta)$ and the matrix function $\nu(x,y)$ as
\begin{align*}
\mu(x,y,a,b, \alpha, \beta)=\begin{bmatrix}
a-by\\
\alpha-\beta y
\end{bmatrix}, \quad \quad
\nu(x,y)=\begin{bmatrix}
\sigma_1\sqrt{y} & 0\\
\sigma_2 \rho\sqrt{y}& \sigma_2 \sqrt{(1-\rho^2)y}
\end{bmatrix}.
\end{align*}
As explained in Section 3, we simulate $M$ realizations of discrete random sample $\left(Y_{t_i},X_{t_i}\right)_i$, for $i=1,2,\dots,N$ and $N=2^l$,  with parameters $a=2$, $b=-0.8$, $\alpha=0.02$, $\beta=2$, $\sigma_1=0.7$, $\sigma_2=0.6$, $\rho=-0.8$, $x_0=\ln{100}$ and $y_0=0.5$. The time interval is $\left[0, T\right]$ for $T=1$ and the subdivision of points is equidistant, $t_i=\frac{T}{N}i$ for $i=0, \dots, N$.\\
Because of the linearity of the function $\mu$ in the parameters and the second summand in \eqref{lln} the discrete log-likelihood is a quadratic function in $a, b, \alpha$ and $\beta$, thus it has a maximum. Using the abbreviations $X_{t_i}=X_i$ and $Y_{t_i}=Y_i$, the AMLE for the vector of drift parameters $\overline{\theta}_n$ is given by
\begin{align}\label{amle_ex}
\begin{bmatrix}
    \overline{a}_n \\ \overline{b}_n \\ \overline{\alpha}_n \\ \overline{\beta}_n
    \end{bmatrix}
    =F\cdot\begin{bmatrix}
    \Delta_n \sum_{i=1}^n Y_{i-1} \sum_{i=1}^{n} \frac{Y_i-Y_{i-1}}{Y_{i-1}}-T(Y_n-y_0)\\
    T \sum_{i=1}^n \frac{Y_i-Y_{i-1}}{Y_{i-1}}-\Delta_n(Y_n-y_0)\sum_{i=1}^n \frac{1}{Y_{i-1}}\\
    \Delta_n \sum_{i=1}^n Y_{i-1} \sum_{i=1}^n \frac{X_{i}-X_{i-1}}{Y_{i-1}}-T(X_n-x_0)\\
    T\sum_{i=1}^n \frac{X_i-X_{i-1}}{Y_{i-1}}-\Delta_n(X_n-x_0)\sum_{i=1}^n \frac{1}{Y_{i-1}}
    \end{bmatrix}
\end{align}
where $F=\left(\Delta_n^2 \sum_{i=1}^n Y_{i-1} \sum_{i=1}^n \frac{1}{Y_{i-1}}-T^2\right)^{-1}$. After long and tedious calculation, we obtain the matrices $\Sigma_n(\overline{\theta}_n)$ and $D^2 l_n(\overline{\theta}_n)$. The formulas are
\begin{align*}
\Sigma_n(\overline{\theta}_n)&=\frac{\Delta_n}{2}\sum_{i=1}^n\frac{1}{Y_{i-1}^2}\cdot
    \begin{bmatrix}
    \frac{1}{1-\rho^2} & 0 & \frac{-\sigma_1 \rho}{\sigma_2(1-\rho^2)}& 0\\
    0 & 0 & 0 & 0\\
    \frac{-\sigma_1 \rho}{\sigma_2(1-\rho^2)} & 0 & \frac{\sigma_1^2}{\sigma_2^2 (1-\rho^2)} & 0\\
    0 & 0 & 0 & 0
    \end{bmatrix},\\
D^2l_n(\overline{\theta}_n)&=G \cdot \begin{bmatrix}
    \sigma_2^2 & -\sigma_1 \sigma_2 \rho \\
    -\sigma_1 \sigma_2 \rho & \sigma_1^2
    \end{bmatrix} \otimes \begin{bmatrix}
    -\Delta_n \sum_{i=1}^n \frac{1}{Y_{i-1}} & T\\
    T & -\Delta_n \sum_{i=1}^n Y_{i-1}
    \end{bmatrix},
\end{align*}
where $G=1/(\sigma_1^2 \sigma_2^2 (1-\rho^2))$ and $\otimes$ denotes the Kronecker product of two matrices \cite{gentle}. 

Since MLE can not be calculated using \eqref{mle_ex}, we estimate it with the formulas in \eqref{amle_ex}. Then we compute AMLE also with \eqref{amle_ex}, but with fewer points than for MLE. More precisely, we take a subsample of length $n=2^k, k<l$, and $\Delta_n=\frac{T}{n}$. Then we determine the percentage of values $\norm{\frac{1}{\sqrt{\Delta_n}} \sqrt{\Sigma_n(\overline{\theta}_n)}^{+}D^2l_n(\overline{\theta}_n)\left(\overline{\theta}_n-\hat{\theta}\right)}_2^2$ that are in the interval $\left[0,\, \chi_{1-p}^2(r)\right]$ where $\chi_{1-p}^2(r)$ is $(1-p)$-quantile of $\chi^2$-distribution with $r$ degrees of freedom. The degrees of freedom $r$ correspond to the rank of the covariance matrix of the observed expression in the norm. The matrix $\Sigma_n(\overline{\theta}_n)$ is a symmetric and singular matrix (and $\Sigma(\hat{\theta})$ as well). From the fact that $\rho \in \left<-1,\,1\right>$, we conclude that its submatrix of nonzero elements is a strictly positive definite matrix of rank 2. Since symmetric matrices are orthogonally diagonalizable, the eigenvalue decomposition of the submatrix is the same as its singular value decomposition (SVD) \cite{gentle}. The SVD of the matrix $\Sigma_n(\overline{\theta}_n)$ is then obtained by placing zeros on the diagonal matrix of SVD of the submatrix and adding vectors to the set of its eigenvectors to complete the basis of $\mathbb{R}^4$. Let us denote the resulting SVD by $UDU^T$. It is now easy to see that the generalized inverse of the square root of $\Sigma_n(\overline{\theta}_n)$ is given by $U\tilde{D}U^T$ where $\tilde{D}$ is such a diagonal matrix that for all $i \leq 2$, $\tilde{d}_{ii}=1/\sqrt{d_{ii}}$ and for $i>2$, $\tilde{d}_{ii}=0$. For this reason, after a short calculation, we conclude that in our case the covariance matrix of  $\frac{1}{\sqrt{\Delta_n}} \sqrt{\Sigma_n(\overline{\theta}_n)}^{+}D^2l_n(\overline{\theta}_n)\left(\overline{\theta}_n-\hat{\theta}\right)$ is an identical matrix of rank $r=2$.\\

\begin{table}[ht]
\caption{$M=1000$, $p=0.025$, $l=12$}
\label{table1}
\centering
\begin{tabular}{|c||  c c c c c c |}
 \hline
$k$  & 3 & 4 & 5 & 6 & 7 & 8  \\ [0.5ex]
 \hline
 \%  & 0.675 & 0.812 & 0.867 & 0.942 & 0.952 & 0.972\\
 \hline
\end{tabular}
\end{table}

\begin{table}[ht]
\caption{$M=1000$, $p=0.05$, $l=14$}
\label{table2}
\centering
\begin{tabular}{|c||  c c c c c c c c |}
 \hline
$k$  & 3 & 4 & 5 & 6 & 7 & 8 & 9 & 10 \\ [0.5ex]
 \hline
 \% & 0.589 & 0.740 & 0.826 & 0.884 & 0.911 & 0.935 &0.944 &0.95\\
 \hline
\end{tabular}
\end{table}

\begin{table}[ht]
\caption{$M=1000$, $p=0.05$, $l=16$}
\label{table3}
\centering
\begin{tabular}{|c||  c c c c c c c c c |}
 \hline
$k$  & 3 & 4 & 5 & 6 & 7 & 8 & 9 & 10 & 11   \\ [0.5ex]
 \hline
 \% & 0.596 & 0.726 & 0.831 & 0.904 & 0.921 & 0.931 &0.933 &0.939 & 0.955 \\
 \hline
\end{tabular}
\end{table}

Tables \ref{table1}, \ref{table2} and \ref{table3} show that an increase in $k$ causes an increase in the percentage so that the value, for a given $p$, approaches $1-p$.

\section{Appendix}
\begin{proof} [Proof of Lemma \ref{lemma1}]
Let $c_1, \dots, c_{7}$ be positive constants occurring in the proof. For function $g_j(x) \equiv g_j(x, \theta)$ ($(x,\theta )\in E\times\Theta$) as in \eqref{gj}, under (A2) and (A4) we use Lemma \ref{mito} in \eqref{a1} and \eqref{a3}.
\allowdisplaybreaks
\begin{align}
\frac{1}{\sqrt{\Delta_n}} \sum_{i=1}^n &\int_{t_{i-1}}^{t_i} \left< g_j(X_s)-g_j(X_{i-1})\,|\, \mu(X_s, \theta_0) \right> \,ds= \notag\\
&=\frac{1}{\sqrt{\Delta_n}}\sum_{i=1}^n \int_{t_{i-1}}^{t_i} \Bigl< \int_{t_{i-1}}^s \Bigl(\nabla g_j (X_u) \mu(X_u, \theta_0)+\notag\\
&+\frac{1}{2} \nabla_2 g_j(X_u)\Bigr) \, du\,\bigg| \, \mu(X_s, \theta_0) \Bigr> \,ds+ \label{a11}\\
&+\frac{1}{\sqrt{\Delta_n}} \sum_{i=1}^n \int_{t_{i-1}}^{t_i} \left<\int_{t_{i-1}}^s \nabla g_j(X_u) \nu(X_u) \, dW_u\,\bigg| \, \mu(X_s, \theta_0) \right>\, ds. \label{a12}
\end{align}
\begin{align}
\frac{1}{\sqrt{\Delta_n}} \sum_{i=1}^n &\int_{t_{i-1}}^{t_i} \left<g_j(X_s)-g_j(X_{i-1})\,| \, \nu(X_s) \, dW_s\right>=\notag\\
&=\frac{1}{\sqrt{\Delta_n}} \sum_{i=1}^n \int_{t_{i-1}}^{t_i} \Bigl< \int_{t_{i-1}}^s \Bigl(\nabla g_j (X_u) \mu(X_u, \theta_0)+\notag\\
&+\frac{1}{2} \nabla_2 g_j(X_u)\Bigr) \, du\,\bigg| \, \nu(X_s) \,dW_s\Bigr>+\label{a31}\\
&+\frac{1}{\sqrt{\Delta_n}} \sum_{i=1}^n \int_{t_{i-1}}^{t_i} \left< \int_{t_{i-1}}^s \nabla g_j(X_u)\nu(X_u) \, dW_u\,\bigg| \, \nu(X_s) \, dW_s\right>\notag
\end{align}
Also, under (A3) and (A4) we use Lemma \ref{vito} in \eqref{a2} for function:
\begin{align}\label{fj}
f_j(x)\equiv f_j (x,\theta ) \coloneqq \left<S^{-1}(x)\mu(x,\theta)\,|\, \partial_j \mu(x, \theta)\right>,\;\;
(x,\theta )\in E\times\Theta.
\end{align}
\begin{align}
&\frac{1}{\sqrt{\Delta_n}} \sum_{i=1}^n \int_{t_{i-1}}^{t_i} \left(f_j(X_{i-1})-f_j(X_s)\right)\,ds=\notag \\
&=-\frac{1}{\sqrt{\Delta_n}} \sum_{i=1}^n \int_{t_{i-1}}^{t_i} \left(f_j(X_s)-f_j(X_{i-1})\right)\,ds=\notag\\
&=-\frac{1}{\sqrt{\Delta_n}} \sum_{i=1}^n \int_{t_{i-1}}^{t_i} \int_{t_{i-1}}^s \bigl(\left<\nabla f_j(X_u)\,|\, \mu(X_u, \theta_0)\right>+\notag\\
&+\frac{1}{2}\Tr{\left(S(X_u)\nabla\left(\nabla f_j(X_u))\right)\right)}\bigr)\,du\,ds- \label{a21}\\
&-\frac{1}{\sqrt{\Delta_n}} \sum_{i=1}^n \int_{t_{i-1}}^{t_i} \int_{t_{i-1}}^s \left<\nabla f_j(X_u)\,| \, \nu(X_u) \, dW_u \right>\, ds \label{a22}
\end{align}
For each $j=1,2,\dots, d$, we define a component of vector $V_n$ as $V_n^j(\theta)=$\eqref{a11}+\\+\eqref{a12}+\eqref{a31}+\eqref{a21}+\eqref{a22}. We will prove that for all $j=1,2, \dots, d$ $V_n^j$ converges in probability to zero, when $n \rightarrow +\infty$. Obviously, then vector $V_n(\theta)$ converges in probability to $\mathbf{0}_d$, when $n \rightarrow +\infty$.

For simplicity, we propose some new notation: $G_j(x, \theta)\coloneqq\nabla g_j(x)\mu(x,\theta)+\frac{1}{2}\nabla_2 g_j(x).$
Using Cauchy-Schwartz inequality for vectors and boundedness of functions we conclude that for \eqref{a11} there exist constant $c_1$ such that
\begin{align*}
\Bigg| \frac{1}{\sqrt{\Delta_n}} \sum_{i=1}^{n} \int_{t_{i-1}}^{t_i} \left< \int_{t_{i-1}}^s G_j(X_u, \theta_0) \, du\,\bigg| \, \mu(X_s, \theta_0) \right> \,ds \Bigg| \leq c_1 \sqrt{\Delta_n}T.
\end{align*}
In the same manner, we conclude that for \eqref{a21} there exists constant $c_2$ such that
\begin{align*}
\Bigg| -\frac{1}{\sqrt{\Delta_n}} \sum_{i=1}^n \int_{t_{i-1}}^{t_i} \int_{t_{i-1}}^s &\Bigl(\left<\nabla f_j(X_u)\,|\, \mu(X_u, \theta_0)\right>+\\
&+\frac{1}{2}\Tr{\left(S(X_u)\nabla\left(\nabla f_j(X_u))\right)\right)}\Bigr)\,du\,ds \Bigg|\leq  c_2 \sqrt{\Delta_n}T.
\end{align*}
In the sequel, we analyze \eqref{a12} using Lemma \ref{mito} for function $x \mapsto \mu(x, \theta_0)$.
\allowdisplaybreaks
\begin{align}
\frac{1}{\sqrt{\Delta_n}} \sum_{i=1}^n &\int_{t_{i-1}}^{t_i} \left<\int_{t_{i-1}}^s \nabla g_j(X_u) \nu(X_u) \, dW_u\,\bigg| \, \mu(X_s, \theta_0) \right>\, ds =\notag\\
&=\frac{1}{\sqrt{\Delta_n}} \sum_{i=1}^n \int_{t_{i-1}}^{t_i} \left< \int_{t_{i-1}}^s \nabla g_j(X_u) \nu(X_u) \, dW_u\,\bigg| \, \mu(X_{i-1}, \theta_0)\right> \,ds+ \label{a12a}\\
&+\frac{1}{\sqrt{\Delta_n}} \sum_{i=1}^n \int_{t_{i-1}}^{t_i} \Bigl<\int_{t_{i-1}}^s \nabla g_j(X_u) \nu(X_u) \, dW_u\,\bigg|\notag\\ 
&\int_{t_{i-1}}^{s} \nabla \mu(X_u, \theta_0)\mu(X_u, \theta_0)\, du\Bigr> \,ds+\label{a12b}\\
&+ \frac{1}{\sqrt{\Delta_n}} \sum_{i=1}^n \int_{t_{i-1}}^{t_i} \Bigl<\int_{t_{i-1}}^s \nabla g_j(X_u) \nu(X_u) \, dW_u\,\bigg|\notag\\ &\frac{1}{2} \int_{t_{i-1}}^s
\nabla_2 \mu(X_u, \theta_0)\,du\Bigr>\,ds +\label{a12c}\\
&+\frac{1}{\sqrt{\Delta_n}} \sum_{i=1}^n \int_{t_{i-1}}^{t_i} \Bigl<\int_{t_{i-1}}^s \nabla g_j(X_u) \nu(X_u) \, dW_u\,\bigg| \notag\\ 
&\int_{t_{i-1}}^{s} \nabla \mu(X_u, \theta_0) \nu(X_u) \, dW_u \Bigr>\,ds\label{a12d}
\end{align}

For \eqref{a12a} and \eqref{a12b} + \eqref{a12c} it is sufficient to prove that it converges to zero in $L^2$. Then it converges in probability to zero, too. The $L^2$ convergence of \eqref{a12a} is proved in the sequel.
\begin{align*}
&\mathbb{E}\left[\left(\frac{1}{\sqrt{\Delta_n}}\sum_{i=1}^n \int_{t_{i-1}}^{t_i} \left<\int_{t_{i-1}}^s \nabla g_j(X_u)\nu(X_u) \, dW_u\,\bigg| \, \mu(X_{i-1}, \theta_0)\right>\,ds\right)^2\right]=\\
&=\frac{1}{\Delta_n}\sum_{i=1}^n \mathbb{E}\left(\int_{t_{i-1}}^{t_i} \left<\int_{t_{i-1}}^s \nabla g_j(X_u)\nu(X_u) \, dW_u\,\bigg| \, \mu(X_{i-1}, \theta_0)\right>\,ds\right)^2+\\
&+\frac{2}{\sqrt{\Delta_n}} \sum_{1 \leq i < l \leq n} \mathbb{E}\Biggl[\left(\int_{t_{i-1}}^{t_i} \left<\int_{t_{i-1}}^s \nabla g_j(X_u)\nu(X_u) \, dW_u\,\bigg| \, \mu(X_{i-1}, \theta_0)\right>\,ds\right)\cdot \\
&\cdot \left(\int_{t_{l-1}}^{t_l} \left<\int_{t_{l-1}}^s \nabla g_j(X_u)\nu(X_u) \, dW_u\,\bigg| \, \mu(X_{l-1}, \theta_0)\right>\,ds\right)\Biggr]\\
&=\frac{1}{\Delta_n}\sum_{i=1}^n \mathbb{E}\left(\int_{t_{i-1}}^{t_i} \left<\int_{t_{i-1}}^s \nabla g_j(X_u)\nu(X_u) \, dW_u\,\bigg| \, \mu(X_{i-1}, \theta_0)\right>\,ds\right)^2\leq \\
&\leq \frac{1}{\Delta_n}\sum_{i=1}^n \mathbb{E}\left(\int_{t_{i-1}}^{t_i} \left(\left<\int_{t_{i-1}}^s \nabla g_j(X_u)\nu(X_u) \, dW_u\,\bigg| \, \mu(X_{i-1}, \theta_0)\right>\right)^2\,ds\right)\leq\\
&\leq \frac{1}{\Delta_n}\sum_{i=1}^n \mathbb{E} \left(\int_{t_{i-1}}^{t_i} \norm{\int_{t_{i-1}}^s \nabla g_j(X_u)\nu(X_u)\, dW_u}_2^2 \cdot \norm{\mu(X_{i-1}, \theta_0)}_2^2\, ds\right)\leq\\
&\leq \frac{c_3}{\Delta_n} \sum_{i=1}^n \Delta_n \int_{t_{i-1}}^{t_i} \mathbb{E}\left(\norm{\int_{t_{i-1}}^s \nabla g_j(X_u)\nu(X_u)\, dW_u}_2^2\right)=\\
&=\frac{c_3}{\Delta_n}\sum_{i=1}^n \Delta_n \int_{t_{i-1}}^{t_i} \mathbb{E}\left(\int_{t_{i-1}}^s \norm{\nabla g_j(X_u)\nu(X_u)}_F^2\,du\right)\leq\\
&\leq c_4 \Delta_n T
\end{align*}
Using Doob's maximal inequality for vector martingale \cite[Theorem 1.7]{ry} and Lemma \ref{iso} \eqref{miito} we conclude that there exists constant $c_{5}$ such that  \eqref{a12b} + \eqref{a12c} is bounded in the following way
\begin{align*}
&\Bigg|\frac{1}{\sqrt{\Delta_n}} \sum_{i=1}^n \int_{t_{i-1}}^{t_i} \left<\int_{t_{i-1}}^s \nabla g_j(X_u) \nu(X_u) \, dW_u\,\bigg|\, \int_{t_{i-1}}^{s} \nabla \mu(X_u, \theta_0)\mu(X_u, \theta_0)\, du\right>ds\\
&+ \frac{1}{\sqrt{\Delta_n}} \sum_{i=1}^n \int_{t_{i-1}}^{t_i} \left<\int_{t_{i-1}}^s \nabla g_j(X_u) \nu(X_u) \, dW_u\,\bigg| \, \frac{1}{2} \int_{t_{i-1}}^s
\nabla_2 \mu(X_u, \theta_0)\,du\right>\,ds\Bigg|\leq \\
&\leq  c_{5} \sqrt{\Delta_n}T.
\end{align*}
To prove convergence of \eqref{a12d} we use It\^o formula for the function $F: \mathbb{R}^k \times \mathbb{R}^k \rightarrow \mathbb{R}$, $F(y,z)=\left<y\,|\,z\right>$ on $[t_{i-1}, s]$ and vector martingales
\begin{align*}
Y_s \coloneqq \int_{t_{i-1}}^s \nabla g_j(X_u)\nu(X_u) \,dW_u, \quad
Z_s \coloneqq \int_{t_{i-1}}^s \nabla \mu(X_u, \theta_0)\nu(X_u) \,dW_u.
\end{align*}
When we calculate quadratic variation $\left<Y^l, Z^l\right>_s$, it is crucial to use independence of components of Brownian motion.
Then, \eqref{a12d} equals
\begin{align}
&\frac{1}{\sqrt{\Delta_n}} \sum_{i=1}^n \int_{t_{i-1}}^{t_i} \left<\int_{t_{i-1}}^s \nabla g_j(X_u) \nu(X_u)  dW_u\bigg|  \int_{t_{i-1}}^{s} \nabla \mu(X_u, \theta_0) \nu(X_u)  dW_u \right>ds=\notag\\
&+\frac{1}{\sqrt{\Delta_n}} \sum_{i=1}^n \int_{t_{i-1}}^{t_i} \int_{t_{i-1}}^s \left<\int_{t_{i-1}}^u \nabla \mu (X_v, \theta_0)\nu(X_v) \,dW_v\bigg|  \nabla g_j(X_u)\nu(X_u)\,dW_u\right>ds+\label{a12d1}\\
&+\frac{1}{\sqrt{\Delta_n}} \sum_{i=1}^n \int_{t_{i-1}}^{t_i} \int_{t_{i-1}}^s \left<\int_{t_{i-1}}^u \nabla g_j(X_v)\nu(X_v) \, dW_v\bigg| \nabla\mu(X_u, \theta_0) \nu(X_u) \, dW_u\right>ds +\label{a12d2}\\
&+\frac{1}{\sqrt{\Delta_n}} \sum_{i=1}^n \int_{t_{i-1}}^{t_i} \int_{t_{i-1}}^s \sum_{p,r=1}^k \left(\left(\nabla g_j(X_u)\nu(X_u)\right) \circ \left(\nabla \mu(X_u, \theta_0)\nu(X_u)\right)\right)_{pr} \,du\,ds\label{a12d3}.
\end{align}
Expressions \eqref{a22}, \eqref{a12d1} and \eqref{a12d2} we treat in the same manner as \eqref{a12a}. Because of boundedness of function in \eqref{a12d3}, it is bounded by $c_{6} \sqrt{\Delta_n}T$.\\
By a similar reasoning as for \eqref{a12a} there exists constant $c_{7}$ such that
\begin{align*}
\mathbb{E}\left[\left( \frac{1}{\sqrt{\Delta_n}}\sum_{i=1}^n \int_{t_{i-1}}^{t_i} \left< \int_{t_{i-1}}^s G_j(X_u, \theta_0) \, du\,\bigg| \, \nu(X_s)\, dW_s \right> \right)^2\right]\leq c_{7} \Delta_n T.
\end{align*}
Hence, we can conclude that  \eqref{a31} converges in $L^2$ to zero so it converges also in probability to zero.\\
In the end, every component of $V_n(\theta)$ converges in probability to zero so we can conclude that the whole vector $V_n(\theta) \overset{\mathbb{P}}{\to} \mathbf{0}_d$, $n \to +\infty$.
\end{proof}
\begin{proof} [Proof of Lemma \ref{pl}]
\begin{align*}
\mathbb{E}\left[\norm{X_{s_2}-X_{s_1}}_2^2\right]\leq 2\mathbb{E}\left[\norm{\int_{s_1}^{s_2}\mu(X_t, \theta_0)\,dt}_2^2\right]+2\mathbb{E}\left[\norm{\int_{s_1}^{s_2} \nu(X_t)\, dW_t}_2^2\right]
\end{align*}
For the first integral we use Cauchy-Schwarz inequality for integrals, and for the second one we use Lemma \ref{iso} \eqref{miito}. Finally, we have
\begin{align*}
\mathbb{E}\left[\norm{X_{s_2}-X_{s_1}}_2^2\right]&\leq 2(s_2-s_1)\mathbb{E}\left[\int_{s_1}^{s_2}\norm{\mu(X_t, \theta_0)}_2^2\,dt\right]+\\
&+2 \mathbb{E}\left[\int_{s_1}^{s_2} \norm{\nu(X_t)}_F^2\,dt\right]\leq\\
&\leq 2K \left(\left(s_2-s_1\right)^2+\left(s_2-s_1\right)\right).
\end{align*}
\end{proof}

\begin{proof} [Proof of Lemma \ref{fk}]
Without loss of generality (Remark \ref{four}) we can assume that assumption (P2) is fulfilled for function $f$.
We prove \eqref{fk1}. When $m=k_1+\dots+k_d$ and $\varepsilon_j=\text{sign}(k_j)$, $j=1,\dots,d$, we have
\begin{align}\label{ras1}
&\left(1+ |k_1|+|k_2|+\cdots+|k_d|\right)^m \norm{C_{\mathbf{k}}(x)-C_{\mathbf{k}}(y)}_2=\notag\\
&=\left(1+\varepsilon_1k_1+\cdots+\varepsilon_d k_d\right)^m \norm{C_{\mathbf{k}}(x)-C_{\mathbf{k}}(y)}_2=\notag\\
&=\sum_{j_0+\cdots+j_d=m} \binom{m}{j_0,j_1,\dots, j_d} \varepsilon_1^{j_1}\cdots \varepsilon_d^{j_d} \mathbf{k}^{\mathbf{j}} \norm{C_{\mathbf{k}}(x)-C_{\mathbf{k}}(y)}_2 \leq\notag\\
&\leq\left(d+1\right)^m \norm{C_{\mathbf{k}}^{(\mathbf{j})}(x)-C_{\mathbf{k}}^{(\mathbf{j})}(y)}_2.
\end{align}
Here we use the multinomial theorem and the property of Fourier coefficients, namely $C_{\mathbf{k}}^{(\mathbf{j})}(x)=i^m \mathbf{k}^{\mathbf{j}}C_{\mathbf{k}}(x)$. Using the definition of Fourier coefficient and Lemma \ref{mvt}, inequality \eqref{ras1} yields
\begin{align*}
&\norm{C_{\mathbf{k}}(x)-C_{\mathbf{k}}(y)}_2 \leq \\
&\leq K_m(\mathbf{k}) \norm{C_{\mathbf{k}}^{(\mathbf{j})}(x)-C_{\mathbf{k}}^{(\mathbf{j})}(y)}_2 \leq\\
&\leq K_m(\mathbf{k}) \frac{1}{(2\pi)^d} \int_{Cl(\mathcal{K}_0)} \norm{D_{\mathbf{j}}^mf(x, \theta)-D_{\mathbf{j}}^mf(y, \theta)}_2\,d\theta\leq\\
&\leq K_m(\mathbf{k}) \frac{1}{(2\pi)^d} \int_{Cl(\mathcal{K}_0)} \int_0^1 \norm{\nabla_x D_{\mathbf{j}}^mf\left(x+s(y-x),\theta\right)}_F\,ds\cdot\norm{x-y}_2\,d\theta\leq\\
&\leq k_1 K_m(\mathbf{k}) \norm{x-y}_2.
\end{align*}
The beginning of proving \eqref{fk2} is the same as in \eqref{ras1}. Afterwards we analyze it in the following way.
\begin{align*}
\norm{C_{\mathbf{k}}(x)}_2
&\leq \left(\frac{d+1}{1+|k_1|+\cdots+|k_d|}\right)^m \frac{1}{(2\pi)^d} \int_{Cl(\mathcal{K}_0)}\norm{D_{\mathbf{j}}^m f(x,\theta)}_2\, d\theta\leq\\
&\leq k_2 K_m(\mathbf{k})
\end{align*}
\end{proof}

\begin{proof} [Proof of Lemma \ref{fk3}]
The proof is the same as one part of proof of Lemma 6.7 in \cite{huzak2018}.
\end{proof}


\end{document}